\declaretheorem[numberwithin=section]{theorem}
\declaretheorem[sibling=theorem]{lemma}
\declaretheorem[sibling=theorem]{corollary}
\declaretheorem[sibling=theorem]{proposition}
\declaretheorem[sibling=theorem]{remark}
\declaretheorem[sibling=theorem]{definition}
\newcommand{\bb}[1]{\mathbb{#1}}
\newcommand{\cc}[1]{\mathcal{#1}}
\newcommand{\ob}[1]{\left(#1\right )} 
\newcommand{\cb}[1]{\left[#1\right ]} 
\newcommand{\set}[1]{\left\{#1\right\}} 
\newcommand{\ab}[1]{\langle #1 \rangle} 
\newcommand{\abs}[1]{\left\vert#1\right\vert} 
\newcommand{\restr}[1]{\!\!\upharpoonright_{#1}} 
\newcommand{\R}{\bb R}
\newcommand{\Z}{\bb Z}
\newcommand{\C}{\bb C}
\renewcommand{\P}{\bb P}
\newcommand{\Walks}[0]{\Gamma} 
\newcommand{\SWalks}[0]{\Omega} 
\newcommand{\si}[1]{#1\cdot #1} 
\newcommand{\mi}[2]{#1\cdot #2} 
\newcommand{\turn}[0]{\angle} 
\newcommand{\CSWalks}[0]{\SWalks^{c}} 
\newcommand{\nbWalks}[0]{\Walks_{\mathrm{nb}}} 
\newcommand{\tfWalks}[0]{\Walks_\mathrm{tf}} 
\newcommand{\bCSWalks}[0]{\bar{\SWalks}^{c}} 
\newcommand{\tnbWalks}[0]{\Walks_{\mathrm{tnb}}} 
\newcommand{\surface}[0]{\cc S} 
\newcommand{\LE}[0]{\mathrm{LE}} 
\newcommand{\orient}[1]{\vv{#1}} 
\newcommand{\loopadd}[0]{\oplus} 
\title{Ising Model Observables and Non-Backtracking Walks}
\author{Tyler Helmuth\footnote{Department of Mathematics, The
    University of British Columbia, Vancouver, B.C., Canada, V6T
    1Z2. Email: jhelmt@math.ubc.ca. Research partially supported by an
    NSERC Alexander Graham Bell Canada Graduate Scholarship.}}
\begin{document}
\maketitle
\date{}

\begin{abstract}
  This paper presents an alternative proof of the connection between
  the partition function of the Ising model on a finite graph $G$ and
  the set of non-backtracking walks on $G$. The techniques used also
  give formulas for spin-spin correlation functions in terms of
  non-backtracking walks. The main tools used are Viennot's theory of
  heaps of pieces and turning numbers on surfaces.
\end{abstract}

\section{Introduction}
\label{sec:introduction-results}

The connection between the Ising model and non-backtracking walks
began with a paper of Kac and Ward~\cite{KacWard1952}. A clearer
understanding of this connection emerged in a paper of
Sherman~\cite{Sherman1960}. Roughly contemporaneous to Sherman's work
was that of Vdovichenko~\cite{Vdovichenko1965a}, while later articles
include~\cite{Burgoyne1963, daCostaMaciel2003, Glasser1970}. The first
definitive understanding of the Kac-Ward approach appears to be~\cite{DolbilinZinovevMishchenkoShtankoShtogrin1999}.  More recently
Loebl~\cite{Loebl2004} and Cimasoni~\cite{Cimasoni2010} have obtained
generalizations of the expressions of Sherman and Kac-Ward to
arbitrary finite graphs by embedding them in closed orientable
surfaces of sufficiently high genus.

This paper gives a new self-contained and essentially elementary proof
that the partition function of the Ising model on a finite graph can
be expressed in terms of weighted non-backtracking walks. As in
previous works an embedding of the finite graph
into a surface of sufficiently high genus is required. Correlations
in Ising models on a graph $G$ can be computed by adding edges to
$G$ and taking derivatives with respect to the Ising couplings of the
added edges. By utilizing the expressions for the partition function on
the (typically non-planar) augmented graph we obtain new
representations for correlation functions in terms of non-backtracking
walks. In the case of planar graphs, these correlation representations
have been established simultaneously and independently by Kager, Lis,
and Meester~\cite{KagerLisMeester2013}, who have also given
convergence results for the planar correlation expressions.

The core of this work is a geometric interpretation of the Mayer expansion
for a hard core system, very much in the spirit of the isomorphism
between the hard core gas and branched
polymers~\cite{BrydgesImbrie2003}. By the high temperature expansion
the partition function for the Ising model on a finite part of the
honeycomb lattice is the partition function $Z$ of a system of cycles
with the hard core constraint that no pair of cycles share an
edge. The standard theory of the Mayer expansion then provides a
formula for $\log Z$.  It is not yet widely appreciated that Viennot's
theory of heaps of pieces~\cite{Viennot1986} is an efficient source of
formulas for Mayer coefficients.  Upon specialization to the honeycomb
lattice the results of this paper immediately show that $\log Z$ is a
sum over non-backtracking walks. On a general finite graph it is less
obvious how to identify high temperature graphs with disjoint cycles
that share no edges, but this can be done.  A more serious issue is
the role of topology when $G$ is not planar. The problem is easily
expressed: two closed smooth curves in the plane intersect an even
number of times, but on the torus, for example, this is false.  To
solve this problem following the literature referenced above
requires a descent into difficult topology, starting with spin
structures.  Here we have tried to shorten this journey, basing our
results on~\cite{CairnsMcIntyre1993}.

\subsection{Statement of Results}
\label{sec:statement-results}

The \emph{Ising model} on a graph $G=(V(G),E(G))$ with
\emph{couplings} $\{L_{xy}\}_{xy\in E(G)}$, $L_{xy}\in \R$, is the
probability measure $\P$ on configurations $\sigma\in\{-1,1\}^{V(G)}$ given
by
\begin{equation*}
  \P(\sigma) = \frac{1}{Z} \exp\ob{\sum_{xy\in
    E(G)}L_{xy}\sigma_{x}\sigma_{y}}\qquad Z = \sum_{\sigma}\exp\ob{\sum_{xy\in
    E(G)}L_{xy}\sigma_{x}\sigma_{y}}.
\end{equation*}
The quantity $\sigma_{x}$ for $x\in V(G)$ is called the \emph{spin} at
the vertex $x$, and $Z$ is called the \emph{partition function}. A
subgraph $H$ of $G$ is called \emph{even} if the degree of each vertex
of $H$ is even. Let $\cc E(G)$ denote the set of even subgraphs of
$G$. It is well known that the study of the Ising model is equivalent
to the study of the generating function of even subgraphs.
\begin{proposition}[name = High Temperature Expansion, see,
  e.g.,~\cite{Baxter1982}]
  \label{prop:HTE}
  The partition function $Z$ of the Ising model on the graph $G$
  can be expressed as
  \begin{equation}
    Z = 2^{\abs{V(G)}}\prod_{xy\in E(G)}\cosh L_{xy}\sum_{H\in\cc E(G)}
    \prod_{xy\in E(H)}\tanh L_{xy}.
  \end{equation}
\end{proposition}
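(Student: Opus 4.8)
The plan is to carry out the \emph{high temperature expansion} directly from the definition of $Z$, converting the sum over spin configurations into a sum over subgraphs by expanding a product over edges. First I would write the Boltzmann weight as a product over edges,
\[
Z = \sum_{\sigma} \prod_{xy\in E} e^{L_{xy}\sigma_x\sigma_y},
\]
and linearize each factor. Since $\sigma_x\sigma_y\in\{-1,+1\}$, and since $\cosh$ is even while $\sinh$ is odd, the key identity is
\[
e^{L_{xy}\sigma_x\sigma_y} = \cosh L_{xy} + \sigma_x\sigma_y\sinh L_{xy} = \cosh L_{xy}\,\ob{1 + \sigma_x\sigma_y\tanh L_{xy}}.
\]
Pulling the $\cosh L_{xy}$ factors out front then yields $Z = \prod_{xy\in E}\cosh L_{xy}\cdot\sum_\sigma\prod_{xy\in E}\ob{1 + \sigma_x\sigma_y\tanh L_{xy}}$.

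The second step is to expand the product over edges. Choosing, for each edge, either the term $1$ or the term $\sigma_x\sigma_y\tanh L_{xy}$ is exactly a choice of a subset $H\subseteq E$ (the edges where the second term is selected), so
\[
\prod_{xy\in E}\ob{1 + \sigma_x\sigma_y\tanh L_{xy}} = \sum_{H\subseteq E}\ \prod_{xy\in E(H)}\sigma_x\sigma_y\tanh L_{xy}.
\]
Interchanging the sums over $\sigma$ and over $H$ isolates the spin dependence in the factor $\sum_\sigma\prod_{xy\in E(H)}\sigma_x\sigma_y$, which I would rewrite by collecting the powers of each $\sigma_v$: since a vertex $v$ contributes one factor $\sigma_v$ per incident edge of $H$, we have $\prod_{xy\in E(H)}\sigma_x\sigma_y = \prod_{v\in V}\sigma_v^{\deg_H(v)}$, where $\deg_H(v)$ is the degree of $v$ in $H$.

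The crucial (and only mildly subtle) step is the parity argument that forces the restriction to even subgraphs. Because the sum over $\sigma$ factorizes over vertices, and because $\sum_{\sigma_v\in\{-1,+1\}}\sigma_v^{d}$ equals $2$ when $d$ is even and $0$ when $d$ is odd, the factor $\sum_\sigma\prod_{v\in V}\sigma_v^{\deg_H(v)}$ equals $2^{\abs{V}}$ when every vertex has even degree in $H$ — that is, when $H\in\cc E(G)$ — and vanishes otherwise. Substituting this back leaves exactly
\[
Z = 2^{\abs{V}}\prod_{xy\in E}\cosh L_{xy}\sum_{H\in\cc E(G)}\prod_{xy\in E(H)}\tanh L_{xy},
\]
as claimed. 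I do not anticipate a genuine obstacle, since the argument is purely algebraic; the one point deserving care is the bookkeeping of the edge-expansion, namely verifying that the vanishing of the odd-degree contributions is precisely what enforces the even-subgraph condition $H\in\cc E(G)$.
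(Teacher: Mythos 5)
Your proof is correct and is exactly the standard high-temperature expansion that the paper defers to \cite{Baxter1982} for: linearize each Boltzmann factor using $\sigma_x\sigma_y=\pm1$, expand the product over edges into a sum over edge subsets, and kill the odd-degree terms by summing over spins. The only discrepancy is that your (correct) computation produces the prefactor $\prod_{xy\in E}\cosh L_{xy}$, whereas the statement as printed writes $\cosh K_{xy}$ --- a typo in the paper, harmless since that prefactor is immediately discarded.
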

From this point forward the factor $2^{\abs{V(G)}}\prod_{xy\in E(G)}\cosh
L_{xy}$ will be dropped as it plays no role in the quantities of
interest in the study of the Ising model.

The central result of this paper is a new proof of the fact that the
generating function for even subgraphs of a finite graph $G$
can be expressed as a linear combination of exponentials of sums of weighted
non-backtracking walks in the graph $G$. Proposition~\ref{prop:HTE}
immediately implies the same holds for the Ising model. 

To state our results some terminology will be needed. A \emph{walk of
  length $n$} is a sequence $\gamma = (\gamma_{1}, \ldots,
\gamma_{n})$ of adjacent vertices in a graph, and $\abs{\gamma} \equiv
n-1$ is the number of \emph{steps} of $\gamma$. A walk is
\emph{non-backtracking} if $\gamma_{j}\neq \gamma_{j+2}$ for $1\leq
j\leq \abs{\gamma}-1$. Let $\nbWalks(G,x,y)$ denote the set of
non-backtracking walks from $x$ to $y$ in $G$. A walk $\gamma$ is
\emph{closed} if $\gamma_{1}=\gamma_{n}$, and a closed walk $\gamma$
is \emph{tail free} if the walk $(\gamma_{1}, \dots, \gamma_{n},
\gamma_{2})$ is non-backtracking. Let $\tfWalks(G,x)$ denote the set
of tail free walks with initial vertex $x$, and $\tfWalks(G) =
\cup_{x}\tfWalks(G,x)$. See~\Cref{fig:TF}.

\tikzstyle{ivertex}=[circle,draw=black!100,fill=white, inner
sep = 0pt, minimum size=2mm]
\tikzstyle{overtex}=[circle,draw=black!100,fill=white, inner
sep = 0pt, minimum size=2mm]
\tikzset{->-/.style={decoration={
  markings,
  mark=at position #1 with {\arrow{>}}},postaction={decorate}}}
\tikzstyle{svertex}=[circle,draw=black!100,fill=black!60,inner
sep=0pt,minimum size =1mm]
\tikzstyle{vertex}=[circle,draw=black!100,fill=black!60, inner
sep = 0pt, minimum size=2mm]
\tikzstyle{cvertex}=[rectangle,draw=black!100,fill=black!60, inner
sep = 0pt, minimum size=2mm]
\tikzset{lgraph/.style={color=black,thick,dotted}}
\tikzset{sgraph/.style={color=black,thick,dashed}}

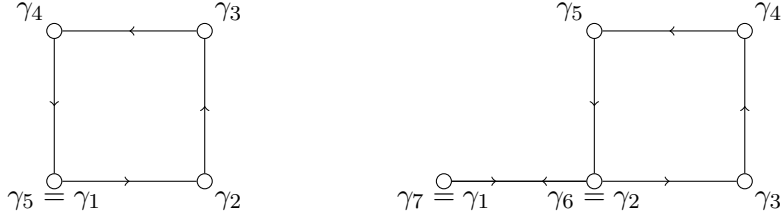
\begin{figure}
  \centering
  \beginpgfgraphicnamed{Figure1}
  \begin{tikzpicture}
    \node[ivertex] (v1) at (0,0) {};
    \node[ivertex] (v2) at (2,0) {};
    \node[ivertex] (v3) at (2,2) {};
    \node[ivertex] (v4) at (0,2) {};
    \node at (v1) [below] {$\gamma_{5}=\gamma_{1}$};
    \node at (v2) [below right] {$\gamma_{2}$};
    \node at (v3) [above right] {$\gamma_{3}$};
    \node at (v4) [above left] {$\gamma_{4}$};
    \draw[->-=.5] (v1) -- (v2);
    \draw[->-=.5] (v2) -- (v3);
    \draw[->-=.5] (v3) -- (v4);
    \draw[->-=.5] (v4) -- (v1);
  \end{tikzpicture} \qquad\qquad\qquad
  \endpgfgraphicnamed
  \beginpgfgraphicnamed{Figure2}
  \begin{tikzpicture}
    \node[ivertex] (v0) at (-2,0) {};
    \node[ivertex] (v1) at (0,0) {};
    \node[ivertex] (v2) at (2,0) {};
    \node[ivertex] (v3) at (2,2) {};
    \node[ivertex] (v4) at (0,2) {};
    \node at (v0) [below] {$\gamma_{7}=\gamma_{1}$};
    \node at (v1) [below ] {$\gamma_{6}=\gamma_{2}$};
    \node at (v2) [below right] {$\gamma_{3}$};
    \node at (v3) [above right] {$\gamma_{4}$};
    \node at (v4) [above left] {$\gamma_{5}$};
    \draw[->-=.33] (v0) -- (v1);
    \draw[->-=.33] (v1) -- (v0);
    \draw[->-=.5] (v1) -- (v2);
    \draw[->-=.5] (v2) -- (v3);
    \draw[->-=.5] (v3) -- (v4);
    \draw[->-=.5] (v4) -- (v1);
  \end{tikzpicture}
  \endpgfgraphicnamed
 \caption{The left-hand illustration depicts a tail free walk. The
   right-hand illustration depicts a walk that is not tail free.}
  \label{fig:TF}
\end{figure}

Let $\surface$ denote a surface of genus $g$. Suppose the graph $G$
properly embeds in $\surface$, meaning the vertices of $G$ can be
identified with points in $\surface$, and the edges of $G$ are curves
in $\surface$ that join the vertices and are mutually disjoint, except
possibly at their endpoints. For $\alpha\in
H_{1}(\surface,\Z_{2})$, define a weight on tail free walks by
\begin{equation}
  \label{eq:Introduction.1}
  w_{\alpha}(\gamma) \equiv (-1)^{\tau(\gamma)+\ab{\alpha,\gamma}}
  \prod_{j=1}^{\abs{\gamma}} K_{\gamma_{j}\gamma_{j+1}}, \qquad
  K_{xy}\equiv \tanh L_{xy}.
\end{equation}
In~\eqref{eq:Introduction.1} the term $\ab{\alpha,\gamma}$ is the
intersection pairing, i.e., the signed number of times $\alpha$ and
$\gamma$ intersect transversely, and $\tau(\gamma)$ is the turning
number of $\gamma$. In the case $\surface=\R^{2}$, $\tau(\gamma)$ is
the integral number of revolutions of the tangent vector to the walk;
the general definition of $\tau(\gamma)$ is given in
Section~\ref{sec:Turning-Surfaces}. The central result of this paper
is

\begin{restatable}{theorem}{ThmOne}
  \label{thm:Introduction-Main-Even}
  Let $G$ be a graph properly embedded in a surface $\surface$ of
  genus $g$. The generating function of even subgraphs of $G$ is given by
  \begin{equation*}
    \sum_{H\in\cc E(G)}\prod_{xy\in E(H)}K_{xy} = \frac{1}{2^{g}}
    \sum_{\alpha\in H_{1}(\surface,\Z_{2})} (-1)^{c(\alpha)}\exp\ob{ -\sum_{\gamma\in
      \tfWalks(G)} \frac{w_{\alpha}(\gamma)}{2\abs{\gamma}}}.
  \end{equation*}
\end{restatable}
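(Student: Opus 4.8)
The plan is to prove the identity in three stages: first convert the exponential of a sum over walks into a product over primitive loops, then match that product against the even-subgraph generating function by a sign-reversing involution organized through heaps of pieces and controlled by turning numbers, and finally take the sum over $H_{1}(\surface,\Z_{2})$ to correct the turning-number signs on higher genus. The abstract's emphasis on heaps as ``an efficient version of the Mayer expansion'' and on turning numbers signals that stages two and three are the real content, while stage one is formal.

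First I would rewrite the exponent on the right. Since the edge part $\prod_{j}K_{\gamma_{j}\gamma_{j+1}}$ is multiplicative and both $\tau$ and $\ab{\alpha,\gamma}$ are additive when a loop is concatenated with itself, one checks $w_\alpha(\ell^{k})=w_\alpha(\ell)^{k}$ for every closed non-backtracking loop $\ell$ and $k\ge 1$. Grouping the based, oriented walks of $\nbWalks(G)$ by their primitive cyclic class, the factor $\tfrac{1}{\abs{\gamma}}$ absorbs the choice of base point and the factor $\tfrac12$ absorbs the two orientations, under which $w_\alpha$ is invariant because reversal negates both $\tau$ and $\ab{\alpha,\gamma}$ and so preserves the sign. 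This gives
\[
  \sum_{\gamma\in\nbWalks(G)}\frac{w_\alpha(\gamma)}{2\abs{\gamma}}
  = -\sum_{[\ell]}\log\bigl(1-w_\alpha(\ell)\bigr),
\]
the sum being over unoriented primitive (aperiodic) cyclic non-backtracking loops, so that $e^{-\sum_\gamma w_\alpha(\gamma)/(2\abs{\gamma})}=\prod_{[\ell]}(1-w_\alpha(\ell))$ and the whole right-hand side becomes $\tfrac{1}{2^{g}}\sum_\alpha(-1)^{c(\alpha)}\prod_{[\ell]}(1-w_\alpha(\ell))$. (Equivalently this product is $\det(I-T_\alpha)^{1/2}$ for the weighted non-backtracking edge operator $T_\alpha$, the Kac--Ward form, but I would follow the elementary heaps route rather than expand a determinant.) I expect this stage to be routine bookkeeping of power series.

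The combinatorial heart is to evaluate $\prod_{[\ell]}(1-w_\alpha(\ell))$ and compare it with $\sum_{H\in\cc E(G)}\prod_{xy\in E(H)}K_{xy}$. Expanding over an enumeration of the primitive loops gives a signed sum $\sum_{S}(-1)^{|S|}\prod_{\ell\in S}w_\alpha(\ell)$ over finite sets $S$ of distinct primitive loops, and I would reorganize this alternating sum by Viennot's heaps-of-pieces calculus, taking the loops as pieces that are concurrent when they share a vertex; the calculus serves as an efficient inclusion--exclusion that resums all sets $S$ having a given edge support. A sign-reversing involution that switches the two strands meeting at a self-intersection then cancels in pairs every support that is not an even subgraph, while Whitney's theorem --- which on $\R^{2}$ equates $(-1)^{\tau(\ell)}$ with the parity of the self-crossings of $\ell$ --- forces the accumulated signs on each surviving even subgraph $H$ to telescope to $+1$, leaving exactly $\prod_{xy\in E(H)}K_{xy}$. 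This is where the turning number is indispensable: it is the device that turns a geometric crossing count into the algebraic sign needed for the cancellation.

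The step I expect to be the main obstacle is controlling these signs simultaneously for all homology classes on a genus-$g$ surface. On $\R^{2}$ Whitney's formula makes the turning-number sign coincide with the crossing parity, but on $\surface$ the turning number is only defined relative to the geometry of the surface (as set up in Section~\ref{sec:Turning-Surfaces}), and the naive crossing count is corrected by how loops wind around the handles; the factors $(-1)^{\ab{\alpha,\gamma}}$ are designed precisely to detect this winding. I would show that summing over the $2^{2g}$ classes $\alpha\in H_{1}(\surface,\Z_{2})$, weighted by $(-1)^{c(\alpha)}$ and normalized by $2^{-g}$, averages these correction signs so that each even subgraph is retained with total coefficient exactly $+1$ while every other configuration cancels. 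Verifying that the Arf-type quadratic function $c(\alpha)$ produces exactly this selection --- equivalently, that the genus-$g$ analogue of Whitney's formula holds after the homological average --- is the crux, and it is precisely here that the embedding into $\surface$ and the intersection pairing do the real work.
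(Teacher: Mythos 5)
Your proposal runs the argument in the opposite direction from the paper and, in doing so, essentially reconstructs the classical Kac--Ward/Sherman route that this paper is explicitly offering an alternative to. Your stage one is fine: grouping the based, oriented closed non-backtracking walks by primitive unoriented cyclic class does convert the exponential into $\prod_{[\ell]}(1-w_{\alpha}(\ell))$ (modulo the standard check that a cyclically non-backtracking closed walk cannot coincide with a rotation of its own reversal, which you should not skip). The paper never touches this product: it starts from $\sum_{H\in\cc E(G)}\prod K_{xy}$, decomposes each even subgraph into edge-disjoint loops via perfect matchings of the local line graphs (with a Pfaffian identity, Lemma~\ref{lem:K2n-Intersection}, controlling the signed count of decompositions), factorizes the crossing signs over loops using the surface turning number (Lemma~\ref{lem:Surface-Factorization}), and then identifies the resulting trivial-heap generating function with the exponential of the walk sum by a logarithmic-derivative, heaps-of-pieces, and loop-erasure argument (Lemma~\ref{lem:Main-Even}).

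The gap in your plan is concentrated in stage two, which is exactly where the content is. First, Viennot's calculus does not apply where you invoke it: the expansion of $\prod_{[\ell]}(1-w_{\alpha}(\ell))$ is a signed sum over \emph{arbitrary} finite sets of primitive loops, with no disjointness constraint, whereas Theorem~\ref{thm:Heaps-Theorem} concerns the generating function of \emph{trivial} heaps, i.e.\ sets of pairwise non-concurrent pieces; these are different series, and the theorem gives no resummation of the former. Second, ``switching the two strands at a self-intersection'' is Sherman's cancellation lemma, not a routine sign-reversing involution: the switch can merge two loops into one or split one into two (changing $(-1)^{\abs{S}}$), can destroy primitivity or introduce a backtrack so that the image leaves your index set, and tracking how $\tau$ and the crossing count change under the switch is precisely where the difficulty lives. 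Third, even after all repeated-edge terms cancel, a surviving even subgraph $H$ is hit once for each of its decompositions into edge-disjoint loops, with sign $(-1)^{\mathrm{cr}}$ after the homological average; the claim that these ``telescope to $+1$'', i.e.\ that $\sum_{\text{decompositions of }H}(-1)^{\mathrm{cr}}=1$, is not a consequence of Whitney's formula but a separate combinatorial identity (the paper's Proposition~\ref{prop:Matching-Decomposition} via the Pfaffian computation of Lemma~\ref{lem:K2n-Intersection}). Your stage three --- that averaging over $H_{1}(\surface,\Z_{2})$ with weights $(-1)^{c(\alpha)}$ converts $(-1)^{\tau+\ab{\alpha,\cdot}}$ into crossing parities --- is correct and matches Lemma~\ref{lem:Surface-Factorization}, but it does not substitute for the two missing combinatorial lemmas above.
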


The constants $c(\alpha)\in\{0,1\}$ are given explicitly in
Equation~\eqref{eq:Constant-c}. This result is not new:
see~\cite{Cimasoni2010}, Theorem~2.1 and Corollary~2.2. The main
advantage of the formula here is that it requires no sophisticated
concepts from topology.

Let $G^{\star}$ be the graph dual to $G$, meaning that $G^{\star}$ has
one vertex for each connected component of $\surface\setminus G$, and
vertices corresponding to neighbouring connected components are joined
by an edge. The connected components of $\surface\setminus G$ are
called \emph{faces}. Let $a,b$ be vertices in $G$, and let
$a^{\star},b^{\star}$ be vertices in $G^{\star}$ such that the faces
corresponding to $a^{\star}$ and $b^{\star}$ are incident to $a$ and
$b$. Let $\eta$ be a path from $a^{\star}$ to $b^{\star}$ in
$G^{\star}$, and define
\begin{equation*}
    \tilde K_{xy} =
    \begin{cases}
      \phantom{-}K_{xy} & (xy)^{\star}\notin \eta \\
      -K_{xy} & (xy)^{\star}\in\eta
    \end{cases}.
\end{equation*}
Referring to~\eqref{eq:Introduction.1}, let $\tilde
w_{\alpha}(\gamma)$ denote the weight of a walk when
$K_{\gamma_{j}\gamma_{j+1}}$ is replaced by $\tilde
K_{\gamma_{j}\gamma_{j+1}}$. For $\gamma\in\nbWalks(G,a,b)$ extend
$\gamma$ to begin at $a^{\star}$ and end at $b^{\star}$ and define the turning
number of $\gamma$ to be the turning number of
$\gamma\eta$, the concatenation of $\gamma$ with $\eta$.

\begin{restatable}{theorem}{ThmTwo}
  \label{thm:Introduction-Correlation-Formula-HT}
  Let $G$ be a graph properly embedded in a surface $\surface$ of
  genus $g$. For $a,b$ not nearest neighbours the spin-spin
  correlation function $\ab{\sigma_{a}\sigma_{b}}$ is given by
    \begin{equation}
      \label{eq:Introduction-Correlation-Formula-HT}
      \ab{\sigma_{a}\sigma_{b}} = -\frac{1}{2^{g}Z(G)}
      \sum_{\alpha\in H_{1}(\surface,\Z_{2})} (-1)^{c(\alpha)}
    \sum_{\gamma\in\nbWalks(G,a,b)} \tilde w_{\alpha}(\gamma) \exp\ob{
    -\sum_{\gamma\in\tfWalks(G)}\frac{\tilde w_{\alpha}(\gamma)}
    {2\abs{\gamma}}},
    \end{equation}
    where the turning number of closed tail free walks is computed by
    viewing the graph $G$ as embedded in $\surface$.
\end{restatable}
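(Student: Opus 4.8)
The plan is to reduce Theorem~\ref{thm:Introduction-Correlation-Formula-HT} to Theorem~\ref{thm:Introduction-Main-Even} via the edge-addition device advertised in the introduction. By the high temperature expansion (Proposition~\ref{prop:HTE}) one has $\ab{\sigma_a\sigma_b}=N/Z(G)$, where $Z(G)=\sum_{H\in\cc E(G)}\prod_{xy\in E(H)}K_{xy}$ and $N=\sum_{H}\prod_{xy\in E(H)}K_{xy}$ sums over subgraphs $H$ in which $a$ and $b$ have odd degree and every other vertex has even degree. First I would introduce the augmented graph $G'=G\cup\{\tilde e\}$ obtained by adding a single edge $\tilde e$ joining $a$ to $b$, with coupling $K_{\tilde e}$. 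Since $\tilde e$ occurs zero or one times in any subgraph, the even subgraph generating function of $G'$ is exactly linear in $K_{\tilde e}$, and its even subgraphs that use $\tilde e$ are precisely the odd-$(a,b)$ subgraphs of $G$ carrying an extra factor $K_{\tilde e}$. Hence $N=\partial_{K_{\tilde e}}\big(\sum_{H\in\cc E(G')}\prod_{xy\in E(H)}K_{xy}\big)\big|_{K_{\tilde e}=0}$, and it suffices to compute this derivative using Theorem~\ref{thm:Introduction-Main-Even}.

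Next I would embed $G'$ in a surface $\surface'$ of genus $g+1$, routing $\tilde e$ through a new handle whose longitude $\nu$ projects onto the dual path $\eta$ and whose meridian $\mu$ is crossed exactly once by $\tilde e$. Writing $H_1(\surface',\Z_2)=H_1(\surface,\Z_2)\oplus\ab{\mu}\oplus\ab{\nu}$ and applying Theorem~\ref{thm:Introduction-Main-Even} to $G'$, I would differentiate the resulting formula in $K_{\tilde e}$ and set $K_{\tilde e}=0$. Two simplifications drive the computation. In the exponential, every walk containing $\tilde e$ carries a factor $K_{\tilde e}$ and so drops out, leaving exactly $\sum_{\gamma\in\nbWalks(G)}$; moreover, since $\nu$ meets a walk $\gamma'$ in $G$ precisely in the edges of $\gamma'$ dual to $\eta$, the coordinate $n\in\Z_2$ of $\nu$ toggles $K_{xy}\mapsto\tilde K_{xy}$, turning $w_\alpha$ into $\tilde w_\beta$. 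In the derivative prefactor only walks using $\tilde e$ exactly once survive, and for these $\ab{\mu,\gamma}=1$, so the meridian coordinate $m$ enters solely through a sign $(-1)^m$.

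The heart of the identification is to cut each surviving closed walk $\gamma$ at its unique occurrence of $\tilde e$, producing an open non-backtracking walk $\delta$ between $a$ and $b$ in $G$. Because $\tilde e$ traverses the handle whose longitude follows $\eta$, the turning number of $\gamma$ in $\surface'$ equals the turning number of the concatenation $\delta\eta$ computed in $\surface$, which is exactly the convention adopted in the statement. The factor $\tfrac{1}{2\abs{\gamma}}$ then resolves cleanly: the factor $\tfrac{1}{\abs{\gamma}}$ passes from based to cyclic walks, and the factor $\tfrac12$ absorbs the two orientations of a cyclic walk, each cutting to a walk of $\nbWalks(G,a,b)$ and its reverse in $\nbWalks(G,b,a)$, with equal weights since $(-1)^\tau$ and the mod-$2$ intersection pairing are reversal invariant. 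Thus the prefactor should collapse to $\sum_{\delta\in\nbWalks(G,a,b)}\tilde w_\beta(\delta)$ with no leftover $\tfrac12$.

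The hard part will be the homological bookkeeping that turns the genus-$(g+1)$ prefactor $2^{-(g+1)}$ into the genus-$g$ prefactor $2^{-g}$ while selecting the flipped weights $\tilde w_\beta$ and reproducing the correct constants $c(\beta)$. Concretely I must carry out the sum over the handle coordinates $m,n\in\Z_2$: the exponential is independent of $m$, the prefactor depends on $m$ only through $(-1)^m$, and the constants $c(\alpha+m\mu+n\nu)$ must be tracked through Equation~\eqref{eq:Constant-c}. I expect the $m$-sum to implement the parity constraint forcing an odd number of uses of $\tilde e$ (here exactly one) and to contribute the single factor of two reconciling $2^{-(g+1)}$ with $2^{-g}$, while the residual sum over $\alpha$ and $n$ reorganizes into the stated sum over $\beta\in H_1(\surface,\Z_2)$ with weights $\tilde w_\beta$. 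Verifying that the signs, the constants $c(\beta)$, and this factor of two all match—together with checking that the turning-number-via-$\eta$ prescription is independent of the representative $\eta$ and of the handle's embedding—is the one genuinely delicate step; the remainder is the bookkeeping of Theorem~\ref{thm:Introduction-Main-Even} applied to $G'$, divided by $Z(G)$.
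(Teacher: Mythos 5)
Your proposal follows essentially the same route as the paper: add the edge $ab$, embed the augmented graph by routing it over a new handle with meridian crossed once and longitude shadowing the dual path $\eta$, apply Theorem~\ref{thm:Introduction-Main-Even} to the genus-$(g+1)$ surface, differentiate in $K_{ab}$ at zero, cut the surviving closed walks at $ab$, and sum out the two handle coordinates to recover the $2^{-g}$ prefactor and the weights $\tilde w_\beta$. The ``delicate bookkeeping'' you defer is exactly what the paper executes via Corollaries~\ref{cor:Turning-Comparison} and~\ref{cor:Turning-Comparison-Handle} together with Proposition~\ref{prop:Intersections-Dislocation}, and it works out as you predict, with the one caveat that the turning numbers on the two surfaces are not equal on the nose but differ by $\abs{\Delta_{2}(\{\gamma,\eta_{1}\})}+1$, which is precisely what produces the $K\mapsto\tilde K$ flip and the sign cancelling the derivative's minus.
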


The formula simplifies greatly for planar graphs:

\begin{restatable}{corollary}{ThmThree}
  \label{cor:High-Temperature-Planar-Correlation}
  Let $G$ be a graph embedded in $\R^{2}$. The spin-spin correlation
  $\ab{\sigma_{a}\sigma_{b}}$ is given by
  \begin{equation*}
    \ab{\sigma_{a}\sigma_{b}} = -\sum_{\gamma\in\nbWalks(G,a,b)}
    \tilde w(\gamma) \exp\ob{-\sum_{\gamma\in
        \tfWalks(G)}\frac{\tilde w(\gamma)-w(\gamma)}{2\abs{\gamma}}}
  \end{equation*}
\end{restatable}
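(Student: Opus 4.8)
The plan is to obtain the corollary as a direct specialization of Theorem~\ref{thm:Introduction-Correlation-Formula-HT} to the genus-zero case. A graph embedded in $\R^{2}$ (equivalently in the sphere) sits in a surface $\surface$ of genus $g=0$, for which the prefactor $1/2^{g}$ equals $1$ and the homology group $H_{1}(\surface,\Z_{2})$ is trivial, containing only the zero class $\beta=0$. Consequently the outer sum over $\beta$ in Theorem~\ref{thm:Introduction-Correlation-Formula-HT} collapses to the single term $\beta=0$.

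For this term I would observe that the intersection pairing $\ab{0,\gamma}$ vanishes for every walk $\gamma$, so each sign $(-1)^{\ab{\beta,\gamma}}$ appearing in the weight equals $+1$; hence $\tilde w_{0}=\tilde w$ and $w_{0}=w$, matching the unsubscripted weights in the corollary (the turning number and the modified couplings $\tilde K$ carry no $\beta$-dependence). I would also confirm from the explicit formula in Equation~\eqref{eq:Constant-c} that $c(0)=0$, so that $(-1)^{c(0)}=1$ and no stray sign survives. With these reductions Theorem~\ref{thm:Introduction-Correlation-Formula-HT} becomes
\begin{equation*}
  \ab{\sigma_{a}\sigma_{b}} = \frac{1}{Z(G)} \sum_{\gamma\in\nbWalks(G,a,b)} \tilde w(\gamma)\, e^{-\sum_{\gamma\in\nbWalks(G)} \frac{\tilde w(\gamma)}{2\abs{\gamma}}}.
\end{equation*}

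It remains to eliminate the denominator $Z(G)$, which, after dropping the universal factor $2^{\abs{V}}\prod\cosh$ from Proposition~\ref{prop:HTE} (it cancels in the ratio defining $\ab{\sigma_{a}\sigma_{b}}$), is precisely the even-subgraph generating function governed by Theorem~\ref{thm:Introduction-Main-Even}. Applying that theorem in the same genus-zero specialization gives $Z(G)=\exp\bigl(-\sum_{\gamma\in\nbWalks(G)} w(\gamma)/(2\abs{\gamma})\bigr)$, with the \emph{unmodified} weights $w$. Substituting $1/Z(G)=\exp\bigl(+\sum_{\gamma} w(\gamma)/(2\abs{\gamma})\bigr)$ and merging the two exponentials into one produces the single factor $\exp\bigl(-\sum_{\gamma} (\tilde w(\gamma)-w(\gamma))/(2\abs{\gamma})\bigr)$; the tilde/untilde mismatch between the exponent from Theorem~\ref{thm:Introduction-Correlation-Formula-HT} and the exponent from Theorem~\ref{thm:Introduction-Main-Even} is exactly what yields the difference $\tilde w-w$ in the claimed formula.

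The main thing to get right is the bookkeeping around $Z(G)$: one must verify that the $Z(G)$ in the denominator of Theorem~\ref{thm:Introduction-Correlation-Formula-HT} is precisely the quantity computed by Theorem~\ref{thm:Introduction-Main-Even}, with the $\cosh$ and $2^{\abs{V}}$ prefactors genuinely absent or cancelled, so that the two exponentials combine cleanly. The only other point needing care is the verification that $c(0)=0$; everything else is a mechanical substitution once both theorems are specialized to $g=0$.
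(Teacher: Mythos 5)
Your proposal is correct and follows exactly the route the paper intends: the paper's own proof is the single sentence that the corollary ``follows by specializing the previous result to the planar case,'' and your write-up simply fills in the details of that specialization (trivial $H_{1}$, the empty product in Equation~\eqref{eq:Constant-c} giving $(-1)^{c(0)}=1$, and eliminating $Z(G)$ via the $g=0$ case of Theorem~\ref{thm:Introduction-Main-Even} so the exponents combine into $\tilde w - w$). Your flagged bookkeeping points, including that the $2^{\abs{V}}\prod\cosh$ prefactor cancels in the correlation ratio, are handled correctly.
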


Before giving the proofs of our theorems, let us make contact with the
spinor holomorphic fermion observable used
in~\cite{ChelkakHonglerIzyurov2012}.
Equation~\eqref{eq:Introduction-Correlation-Formula-HT} combined with
another correlation identity presented in
Section~\ref{sec:Applications} yields
\begin{restatable}{corollary}{ThmFour}
  \label{cor:Spinor-Identification}
  Let $F(a,b)$ denote the spinor holomorphic fermion observable
  of~\cite{ChelkakHonglerIzyurov2012} evaluated at $b$. Then, up to a
  modulus one multiplicative constant, $F(a,b) =
  \sum_{\gamma\in\nbWalks(G,a,b)} \tilde w(\gamma)$
\end{restatable}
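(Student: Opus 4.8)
The plan is to match the non-backtracking walk sum $\sum_{\gamma\in\nbWalks(G,a,b)}\tilde w(\gamma)$ against the combinatorial definition of the spinor observable $F(a,b)$ in~\cite{ChelkakHonglerIzyurov2012}. First I would recall that $F(a,b)$ is, up to normalization, a sum over contour configurations on $G$ whose vertices all have even degree except for $a$ and $b$, each configuration weighted by the product of edge weights $K_{xy}$ and a complex winding phase of the form $e^{-\frac{i}{2}\wind(\gamma)}$, where $\wind(\gamma)$ is the total turning of the tangent vector along the contour joining $a$ to $b$. Because $G$ is planar I would work throughout with $\surface=\R^{2}$, $g=0$, so that the homology sum in Theorem~\ref{thm:Introduction-Correlation-Formula-HT} is trivial and Corollary~\ref{cor:High-Temperature-Planar-Correlation} applies directly with the single weight $\tilde w(\gamma)$.

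The central step is a factorization of the winding phase. For an open walk the turning number of $\gamma$ is defined via the closed curve $\gamma\eta$ obtained by concatenating $\gamma$ with the dual path $\eta$ from $a^{\star}$ to $b^{\star}$; since a closed curve has integer turning number, $\tau(\gamma)$ is an integer, and the open-path winding decomposes as $\wind(\gamma)=2\pi\tau(\gamma)-R$, where $R$ collects the turning contributed by $\eta$ itself together with the exterior-angle corrections at the endpoints $a^{\star},b^{\star}$. Crucially $R$ depends only on $\eta$ and the endpoint reference directions, not on $\gamma$. Hence the walk-dependent CHI phase factors as
\begin{equation*}
e^{-\frac{i}{2}\wind(\gamma)} = (-1)^{\tau(\gamma)}\,C, \qquad |C| = 1,
\end{equation*}
where $C=e^{iR/2}$ is a single modulus-one constant. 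This is precisely the constant appearing in the statement: all of the genuine $\gamma$-dependence of the complex phase is carried by the real factor $(-1)^{\tau(\gamma)}$ already present in $\tilde w(\gamma)$, while the residual unit phase is uniform over the sum.

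It then remains to match the remaining combinatorial data. I would invoke the correlation identity of Section~\ref{sec:Applications} together with Corollary~\ref{cor:High-Temperature-Planar-Correlation} to strip the exponential prefactor, which is a ratio of even-subgraph generating functions, and thereby isolate $\sum_{\gamma\in\nbWalks(G,a,b)}\tilde w(\gamma)$ as the unnormalized contour sum matching CHI's numerator. The sign modification $\tilde K_{xy}$, which flips the coupling on each edge crossed by $\eta$, must be identified with the branch cut (disorder line) that defines the monodromy of the spinor around $a$ and $b$; changing $\eta$ alters the sum only by a further global sign, harmlessly absorbed into $C$. Combining these identifications yields $F(a,b)=C\sum_{\gamma\in\nbWalks(G,a,b)}\tilde w(\gamma)$ with $|C|=1$.

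The hard part will be establishing the factorization rigorously: one must verify that every trace of the $\gamma$-dependence of $\wind(\gamma)$ is captured by the integer $\tau(\gamma)$ and that the residual $R$ is truly walk-independent, which requires careful bookkeeping of the exterior angles at $a^{\star}$ and $b^{\star}$ and of the interaction between the branch cut along $\eta$ and the edges of each walk. Once the equality $e^{-\frac{i}{2}\wind(\gamma)}=(-1)^{\tau(\gamma)}C$ is confirmed for one admissible choice of $\eta$ and of endpoint reference directions, independence of all other choices up to a modulus-one factor is immediate, and the corollary follows.
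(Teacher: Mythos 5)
Your route is genuinely different from the paper's, and it has a gap at its central step. The paper never attempts to match the complex winding phase $e^{-\frac{i}{2}\wind(\gamma)}$ against $(-1)^{\tau(\gamma)}$ walk by walk. Instead it combines Corollary~\ref{cor:High-Temperature-Planar-Correlation} with the low-temperature expansion on the dual graph (Corollary~\ref{cor:Low-Temperature-Correlation}) to obtain Theorem~\ref{thm:Planar-Duality}, which identifies $\sum_{\gamma\in\nbWalks(G,a,b)}\tilde w(\gamma)$ with the order--disorder ratio $\ab{\sigma_{a}\sigma_{b}}_{G}/\ab{\sigma_{a^{\star}}\sigma_{b^{\star}}}_{G^{\star}}$; the link to $F(a,b)$ is then made by comparing that ratio with Equation~(2.4) of~\cite{ChelkakHonglerIzyurov2012}, where the observable is already expressed in these terms. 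This sidesteps all phase bookkeeping: the ratio of correlations is a single real, corner-independent number, and the corner- and $\eta$-dependence is absorbed into the unit constant exactly where the paper says it is.

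The gap in your version is the claim that $R$, hence $C=e^{iR/2}$, is independent of $\gamma$. Writing $2\pi\tau(\gamma\eta)=\wind(\gamma)+\wind(\eta)+\theta_{a}+\theta_{b}$, the exterior angles $\theta_{a},\theta_{b}$ at the junctions of $\gamma$ with $\eta$ at $a^{\star}$ and $b^{\star}$ depend on the first and last edges of $\gamma$, and these vary over $\nbWalks(G,a,b)$ (walks may leave $a$ and enter $b$ along any incident edge). Since $e^{-\frac{i}{2}\wind(\gamma)}$ is sensitive to $\wind$ modulo $4\pi$, the residual phase $e^{i(\wind(\eta)+\theta_{a}+\theta_{b})/2}$ genuinely changes from walk to walk, so the asserted factorization $e^{-\frac{i}{2}\wind(\gamma)}=(-1)^{\tau(\gamma)}C$ with a single constant $C$ fails as stated. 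To repair it you would have to fix the terminal corner (so that $\theta_{b}$ is constant), handle the source similarly, and then relate the values of $F$ at the different corners around $b$ using the known unit monodromy of the spinor --- at which point you have essentially reconstructed the content of CHI's Equation~(2.4) and rejoined the paper's argument. A secondary, smaller issue is that CHI's observable is a sum over even configurations with two defects rather than over non-backtracking walks; your appeal to the correlation identities of Section~\ref{sec:Applications} does implicitly supply that translation, but it should be said explicitly that this is where Theorem~\ref{thm:Introduction-Correlation-Formula-HT} and Lemma~\ref{lem:Not-TNB} are doing the work.
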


The modulus one constant in Corollary~\ref{cor:Spinor-Identification}
can be explicitly identified given a choice of the corner the spinor
holomorphic fermion observable is evaluated at, and a choice of the
path $\eta$ used in the definition of the weight $\tilde
w$.

\section{Proofs}
\label{sec:proofs}

\subsection{Outline of Proofs}
\label{sec:outline-proofs}

Broadly speaking, the proofs utilize two main tools. The first is a
generalization of the turning number of a regular curve in the plane
that applies to regular curves in surfaces. The generalization to
surfaces was originally introduced in~\cite{Reinhart1960,
  Reinhart1963, Chillingworth1972}, though the self-contained
presentation given in Section~\ref{sec:Loop-Weights} is based on the
work of Cairns and McIntyre~\cite{CairnsMcIntyre1993}. The turning
number enables the self-intersections of a curve to be counted mod
2. The second tool is the theory of heaps of pieces introduced by
Viennot~\cite{Viennot1986}, which is briefly recalled
in Appendix~\ref{sec:Heaps}. 

After dealing with preliminary definitions and conventions in
Section~\ref{sec:Preliminaries} the paper proceeds in four main
steps:
\begin{itemize}
\item Section~\ref{sec:Even-Decomposition} introduces
  \emph{loops}, which are a type of orientable subgraph, and
  establishes the equality of the generating function for even
  subgraphs with a weighted generating function of collections of
  edge-disjoint loops. The results presented here are not new, but are
  included to keep the paper self contained.

\item To study the weighted generating function of collections of
  loops, Section~\ref{sec:Loop-Weights} considers loops as curves in a
  surface $\surface$. Utilizing the turning number of a curve on a
  surface, the weight on collections of loops is factored
  into a sum of weights, each of which is multiplicative on
  collections of loops.

\item The multiplicative weights obtained in the preceding step are
  well-suited to combinatorial analysis, and in
  Section~\ref{sec:TNB-Walks} the theory of heaps of pieces is used in
  combination with a loop-erasure argument to show that the generating
  function of multiplicatively weighted loops is equivalent to a sum
  of weighted generating functions of tail free walks.

\item In Section~\ref{sec:Applications} the preceding results are used
  to establish the results about the Ising model.
\end{itemize}

\subsection{Preliminaries}
\label{sec:Preliminaries}

A \emph{graph} will mean a finite graph without loops or multiple
edges, and $V=V(G)$, $E=E(G)$ will denote the sets of vertices and edges
of a graph $G$. The cardinality of a set $X$ will be denoted
$\abs{X}$. Edges $\{x,y\}$ of a graph will be abbreviated $xy$. A
graph is called \emph{even} if each vertex has even degree.

Given an oriented surface $\surface$ and a graph $G$, an
\emph{embedding} of $G$ is an identification of the vertices of $G$
with distinct points of $\surface$, and an identification of the edges
of $G$ with curves on $\surface$. It will be assumed that the curves
representing edges have no coincident segments. An embedding is
\emph{proper} if the curves representing edges intersect only at
vertices. If $\surface$ carries a Riemannian metric, then at each
vertex $v\in V(G)$ of an embedded graph $G$ there is a cyclic order on
the edges containing $v$ given by the orientation of the
surface. Throughout this paper $G$ will always denote a graph that is
properly embedded in an oriented surface $\surface$ that carries a
Riemannian metric.

\subsection{Decomposing Even Graphs}
\label{sec:Even-Decomposition}

This section defines a one-to-many map from even subgraphs to
collections of loops. This is done by choosing a matching of the edges
incident to a vertex $v$ for each vertex $v$. A choice of matchings
decomposes an even subgraph into a collection of closed loops, each
loop being formed by picking a sequence of edges that are matched to
one another.

In Section~\ref{sec:Even-Matching} a weight on matchings is defined
so that the sum of the weights of matchings corresponding to a given
even subgraph is equal to the weight of the subgraph itself. The
weight on matchings is transferred to a weight on collections of loops
in Section~\ref{sec:Loops-Matchings}. The important conclusion is
Lemma~\ref{lem:Decomposition-Weight}.

\subsubsection{Matchings of Even Graphs}
\label{sec:Even-Matching}

\begin{definition}
  The \emph{line graph} $\cc L(G)$ of a graph $G$ is the graph with
  vertices $E(G)$ and edges $\{ \{xy,yz\} \mid xy,yz\in E(G)\}$. 
\end{definition}

The vertices of the line graph will be called \emph{half-edges};
one can think of a vertex in $\cc L(G)$ as being the midpoint
of an edge in $G$. For a vertex $v$ in $G$ let $\iota(v,G)$ denote the
subgraph induced in $\cc L(G)$ by the set of vertices $\{ vw\in
E(G)\}$. See Figure~\ref{fig:Line-Graph}. If $G$ is an embedded graph,
there are embeddings of $\iota(v,G)$ given by identifying the
half-edges of $\iota(v,G)$ with points on the edges incident to $v$ in
a neighbourhood of $v$.

\begin{figure}[h]
  \centering
  \beginpgfgraphicnamed{Figure3}
  \begin{tikzpicture}[scale=3] 
    \draw[step=.5cm,gray,thin] (-.8,-.8) grid (.8,.8); 
    \node[cvertex] (v01) at (0.0,0.25) {};
    \node[vertex] (v02) at (0.0,0.75) {};
    \node[cvertex] (v10) at (0.25,0.0) {};
    \node[vertex] (v20) at (0.75,0.0) {};
    \node[cvertex] (v-10) at (-0.25,0.0) {};
    \node[vertex] (v-20) at (-0.75,0.0) {};
    \node[cvertex] (v0-1) at (0.0,-0.25) {};
    \node[vertex] (v0-2) at (0.0,-0.75) {};
    \node[vertex] (v11) at (0.5,0.25) {};
    \node[vertex] (v-11) at (-.5,0.25) {};
    \node[vertex] (v-1-1) at (-.5,-.25) {};
    \node[vertex] (v1-1) at (.5,-.25) {};
    \node[vertex] (v12) at (.25,.5) {};
    \node[vertex] (v-12) at (-.25,.5) {};
    \node[vertex] (v-1-2) at (-.25,-.5) {};
    \node[vertex] (v1-2) at (.25,-.5) {};
    \node[]  at (0,0) [below] {$\,\,\,\,\,0$};
    \draw[thick,white] [-] (v10) -- (v20);
    \draw[thick,white] [-] (v10) -- (v11);
    \draw[thick,white] [-] (v10) -- (v1-1);
    \draw[thick,white] [-] (v10) -- (v01);
    \draw[thick,white] [-] (v10) -- (v-10);
    \draw[thick,white] [-] (v10) -- (v0-1);
    \draw[thick,white] [-] (v-10) -- (v01);
    \draw[thick,white] [-] (v-10) -- (v0-1);
    \draw[thick,white] [-] (v-10) -- (v-20);
    \draw[thick,white] [-] (v-10) -- (v-11);
    \draw[thick,white] [-] (v-10) -- (v-1-1);
    \draw[thick,white] [-] (v-12) -- (v-11);
    \draw[thick,white] [-] (v-12) -- (v01);
    \draw[thick,white] [-] (v-12) -- (v02);
    \draw[thick,white] [-] (v-12) -- (v12);
    \draw[thick,white] [-] (v12) -- (v02);
    \draw[thick,white] [-] (v12) -- (v01);
    \draw[thick,white] [-] (v12) -- (v11);
    \draw[thick,white] [-] (v-1-2) -- (v-1-1);
    \draw[thick,white] [-] (v-1-2) -- (v0-1);
    \draw[thick,white] [-] (v-1-2) -- (v0-2);
    \draw[thick,white] [-] (v-1-2) -- (v1-2);
    \draw[thick,white] [-] (v1-2) -- (v0-2);
    \draw[thick,white] [-] (v1-2) -- (v0-1);
    \draw[thick,white] [-] (v1-2) -- (v1-1);
    \draw[thick,white] [-] (v-20) -- (v-11);
    \draw[thick,white] [-] (v-20) -- (v-1-1);
    \draw[thick,white] [-] (v-11) -- (v-1-1);
    \draw[thick,white] [-] (v20) -- (v11);
    \draw[thick,white] [-] (v20) -- (v1-1);
    \draw[thick,white] [-] (v11) -- (v1-1);
    \draw[thick,white] [-] (v0-2) -- (v0-1);
    \draw[thick,white] [-] (v0-1) -- (v01);
    \draw[thick,white] [-] (v01) -- (v02);
    \draw[lgraph] [-] (v10) -- (v20);
    \draw[lgraph] [-] (v10) -- (v11);
    \draw[lgraph] [-] (v10) -- (v1-1);
    \draw[sgraph] [-] (v10) -- (v01);
    \draw[sgraph] [-] (v10) -- (v-10);
    \draw[sgraph] [-] (v10) -- (v0-1);
    \draw[sgraph] [-] (v-10) -- (v01);
    \draw[sgraph] [-] (v-10) -- (v0-1);
    \draw[lgraph] [-] (v-10) -- (v-20);
    \draw[lgraph] [-] (v-10) -- (v-11);
    \draw[lgraph] [-] (v-10) -- (v-1-1);
    \draw[lgraph] [-] (v-12) -- (v-11);
    \draw[lgraph] [-] (v-12) -- (v01);
    \draw[lgraph] [-] (v-12) -- (v02);
    \draw[lgraph] [-] (v-12) -- (v12);
    \draw[lgraph] [-] (v12) -- (v02);
    \draw[lgraph] [-] (v12) -- (v01);
    \draw[lgraph] [-] (v12) -- (v11);
    \draw[lgraph] [-] (v-1-2) -- (v-1-1);
    \draw[lgraph] [-] (v-1-2) -- (v0-1);
    \draw[lgraph] [-] (v-1-2) -- (v0-2);
    \draw[lgraph] [-] (v-1-2) -- (v1-2);
    \draw[lgraph] [-] (v1-2) -- (v0-2);
    \draw[lgraph] [-] (v1-2) -- (v0-1);
    \draw[lgraph] [-] (v1-2) -- (v1-1);
    \draw[lgraph] [-] (v-20) -- (v-11);
    \draw[lgraph] [-] (v-20) -- (v-1-1);
    \draw[lgraph] [-] (v-11) -- (v-1-1);
    \draw[lgraph] [-] (v20) -- (v11);
    \draw[lgraph] [-] (v20) -- (v1-1);
    \draw[lgraph] [-] (v11) -- (v1-1);
    \draw[lgraph] [-] (v0-2) -- (v0-1);
    \draw[sgraph] [-] (v0-1) -- (v01);
    \draw[lgraph] [-] (v01) -- (v02);
  \end{tikzpicture}
  \endpgfgraphicnamed
  \caption{Part of the line graph $\cc L(G)$ superimposed on
    $G=\Z^{2}$.  The subgraph $\iota (0,G)$, which is not properly
    embedded in $\R^{2}$, is comprised of the square vertices and
    dashed edges.}
  \label{fig:Line-Graph}
\end{figure}
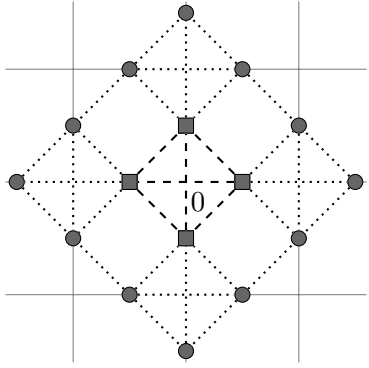

Figure~\ref{fig:Line-Graph} demonstrates that the embedding of
$\iota(v,G)$ obtained by placing the half-edges on the edges of $G$
need not be properly embedded, even if $G$ is properly embedded. The
next definitions and lemma make this precise. The coincidence of the
notions of crossing and intersection will be important in the sequel,
though this section will only use the notion of a crossing.

\begin{definition}
  Two edges $e_{1}e_{2},f_{1}f_{2}\in\iota(v,G)$ \emph{cross at $v$}
  if $e_{1}<f_{1}<e_{2}<f_{2}$ in the cyclic order on edges at $v$.
\end{definition}

\begin{definition}
  Two curves $\gamma_{1},\gamma_{2}$ \emph{intersect} at $v$ if there
  is a homeomorphism of a neighbourhood of $v$ such that $v$ is mapped
  to $0$, $\gamma_{1}$ is mapped to the $x$-axis, and $\gamma_{2}$ is
  mapped to the $y$ axis.
\end{definition}

\begin{lemma}
  \label{lem:Crossing-Intersecting}
   If $G$ is properly embedded then two edges $e_{1}e_{2}$ and $f_{1}f_{2}$ in
  $\iota(v,G)$ cross if and only if the curve $e_{1}e_{2}$ intersects
  the curve $f_{1}f_{2}$.
\end{lemma}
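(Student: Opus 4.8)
The plan is to prove the equivalence locally at the vertex $v$, reducing the statement to a planar configuration of four half-edges meeting at a single point. Since both the crossing condition and the intersection condition depend only on the behaviour of the two curves $e_1e_2$ and $f_1f_2$ in a small neighbourhood of $v$, I can work entirely inside a disc around $v$ on which the Riemannian metric induces the cyclic order on the edges. First I would record the set-up: each of the four half-edges $e_1, e_2, f_1, f_2$ approaches $v$ along a distinct ray (they are distinct because $H$ is properly embedded, so the curves meet only at $v$ and the edges of $H$ have no coincident segments), and the cyclic order on these rays is exactly the cyclic order used to define ``crossing''. The two curves of interest are $e_1 e_2$, which enters along $e_1$ and leaves along $e_2$, and $f_1 f_2$, which enters along $f_1$ and leaves along $f_2$.

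Next I would argue the forward direction. Suppose the cyclic order is $e_1 < f_1 < e_2 < f_2$. Then the rays $f_1$ and $f_2$ lie in the two distinct arcs into which the rays $e_1$ and $e_2$ divide a small circle around $v$; equivalently, the endpoints of the arc $f_1 f_2$ are separated by the arc $e_1 e_2$. The plan is to build the required homeomorphism explicitly: straighten the curve $e_1 e_2$ to the $x$-axis by a homeomorphism of the disc fixing $v$ (possible since $e_1 e_2$ is a simple arc through $v$), and check that under this straightening the curve $f_1 f_2$ has its two ends in the upper and lower half-planes respectively. A simple arc with one end in the open upper half-plane and one in the open lower half-plane, passing through the origin, can then be straightened to the $y$-axis by a further homeomorphism fixing the $x$-axis setwise, giving precisely the homeomorphism in the definition of intersection.

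For the reverse direction I would argue the contrapositive: if the edges do not cross, the cyclic order is (up to rotation and the symmetry swapping the roles of the two curves) $e_1 < e_2 < f_1 < f_2$, so the two ends $f_1, f_2$ of the second arc lie in the \emph{same} one of the two arcs determined by $e_1, e_2$. In this case $f_1 f_2$ can be pushed off $v$ entirely within a single complementary arc, so after straightening $e_1 e_2$ to the $x$-axis the arc $f_1 f_2$ lies (near $v$) in a single closed half-plane and meets the $x$-axis only at the origin. No homeomorphism of the neighbourhood can carry this picture to the two perpendicular axes, because such a homeomorphism would preserve the mod-$2$ count of how the two curves separate each other's endpoints on the boundary circle — a parity invariant that is $1$ in the crossing case and $0$ in the non-crossing case. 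I would phrase this invariant cleanly as: the number of ends of $f_1 f_2$ lying in a given complementary component of $e_1 e_2$ is a homeomorphism invariant, equal to $1$ for transverse intersection (the axes) and to $0$ or $2$ otherwise.

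The main obstacle I anticipate is the reverse direction, specifically making rigorous the claim that the non-crossing local picture cannot be a homeomorphic image of the two axes. The forward direction is essentially a construction, but the reverse requires an honest topological invariant distinguishing the two configurations. The cleanest route is to use the separation parity on the boundary circle of the neighbourhood: two simple arcs through an interior point, each meeting the boundary in two points, are ``linked'' on the circle (their endpoints alternate) if and only if they cross, and this alternation pattern is preserved by any homeomorphism of the disc fixing the center. I would therefore isolate this fact as the technical core, prove it via the Jordan curve theorem applied to the simple closed curve formed by $e_1 e_2$ together with a boundary arc, and then read off both directions of the lemma from it.
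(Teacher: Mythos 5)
Your proposal is correct and takes essentially the same approach as the paper: the paper's proof also works on a small circle $C(r)$ about $v$, observes that crossing is equivalent to the four boundary points alternating, and uses the fact that the arc $e_{1}e_{2}$ separates the small disc to conclude that intersection occurs if and only if the points alternate. Your write-up simply makes explicit the homeomorphism construction and the separation-parity invariant that the paper leaves implicit.
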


\begin{proof}
  Consider a circle $C(r)$ of radius $r$ centered at $v$, and let
  $x_{i}$ (resp.\ $y_{i}$) be the points where $e_{i}$ (resp.\ $f_{i}$)
  intersects $C(r)$. The edges $e_{1}e_{2}$ and $f_{1}f_{2}$ cross if
  and only if these points alternate around the perimeter of $C(r)$;
  as the edge $e_{1}e_{2}$ partitions $C(r)$ into two sets for $r$
  sufficiently small, it follows that an intersection occurs if and
  only if a crossing occurs.
\end{proof}

\begin{definition}
  A \emph{(perfect) matching} of $G=(V,E)$ is a
  collection $M\subset E$ of edges such that for any vertex $v\in V$ there
  exists a unique edge $e\in M$ containing $v$. Let $\cc M(G)$ denote
  the set of perfect matchings of $G$.
\end{definition}

Define the weight $w_{c}(M)$ of a matching on $\iota(v,G)$ by
\begin{equation}
  w_{c}(M) = (-1)^{\mathrm{cr}(M)}\prod_{xy,yz\in M}\sqrt{K_{xy}K_{yz}},
\end{equation}
where $\mathrm{cr}(M)$ denotes the number of pairs of edges in the matching
that cross at $v$.

\begin{lemma}
  \label{lem:K2n-Intersection}
  Let $G$ be an even graph, and $v\in V(G)$. Then
  \begin{equation}
    \label{eq:K2n-Intersection.1}
    \sum_{M\in\cc M(\iota(v,G))}w_{c}(M)=\prod_{xy\in V(\iota(v,G))}\sqrt{K_{xy}}
  \end{equation}
\end{lemma}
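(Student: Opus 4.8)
The plan is first to dispose of the weight product and reduce the statement to a purely combinatorial identity about signed matchings of a complete graph. Since $H$ is even, $\deg v = 2n$ for some $n$, and because any two half-edges at $v$ are adjacent in $\cc L(H)$, the graph $\iota(v,H)$ is the complete graph $K_{2n}$ on the $2n$ half-edges $\set{vw \in E(H)}$. For any perfect matching $M$ of $K_{2n}$ each half-edge $vw$ lies in exactly one matched pair, so
\begin{equation*}
  \prod_{xy,yz\in M}\sqrt{K_{xy}K_{yz}} = \prod_{vw\in V(\iota(v,H))}\sqrt{K_{vw}},
\end{equation*}
independently of $M$. This factor is exactly the right-hand side of~\eqref{eq:K2n-Intersection.1}, so it pulls out of the sum and the lemma becomes equivalent to the identity $\sum_{M\in\cc M(K_{2n})}(-1)^{\mathrm{cr}(M)}=1$, where $\mathrm{cr}(M)$ counts the pairs of matching edges that cross in the cyclic order on half-edges at $v$ induced by the orientation of $\surface$.

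To prove this identity I would induct on $n$, realizing the half-edges as $2n$ points $1,\dots,2n$ placed in cyclic order on a circle, so that two chords cross exactly when their endpoints alternate. Fix the point $1$ and split the sum according to the partner $k\in\set{2,\dots,2n}$ of $1$. Writing $M'$ for the matching that $M$ induces on the remaining $2n-2$ points, every crossing pair of $M$ either involves the chord $1k$ or lies within $M'$, so $\mathrm{cr}(M)=\mathrm{cr}(M')+c_k(M')$, where $c_k(M')$ is the number of chords of $M'$ that separate $1$ from $k$; moreover crossings among the chords of $M'$ are unaffected by deleting the two points $1,k$, so summing $(-1)^{\mathrm{cr}(M')}$ over $M'$ recovers the signed sum $f(2n-2)$ for $K_{2n-2}$.

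The crux is a parity observation that decouples $c_k(M')$ from $M'$. The chord $1k$ cuts the circle into the arc $A=\set{2,\dots,k-1}$ of size $k-2$ and its complement, and a chord of $M'$ crosses $1k$ iff it has exactly one endpoint in $A$. Since the points of $A$ that are not matched across $1k$ are matched among themselves, their number is even, forcing $c_k(M')\equiv k-2\equiv k \pmod 2$ for every admissible $M'$. Hence $(-1)^{\mathrm{cr}(M)}=(-1)^{k}(-1)^{\mathrm{cr}(M')}$, and each $k$ contributes $(-1)^k f(2n-2)$. Therefore $f(2n)=f(2n-2)\sum_{k=2}^{2n}(-1)^k=f(2n-2)$, since the sum of the $2n-1$ alternating signs (beginning and ending with $+1$) is $1$; as $f(2)=1$, the induction gives $f(2n)=1$.

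I expect the main obstacle to be this parity observation together with the bookkeeping around it: one must verify carefully that $\mathrm{cr}$ decomposes as $\mathrm{cr}(M')+c_k(M')$ with no double counting, that crossings among the surviving chords are genuinely unchanged upon deleting the endpoints of $1k$ (so the reduced problem is truly $f(2n-2)$), and that the cyclic order used throughout is consistently the one at $v$ fixed by the orientation. An alternative I would keep in reserve is to recognize $\sum_M(-1)^{\mathrm{cr}(M)}$ as the Pfaffian of the skew-symmetric sign matrix $A_{ij}=\mathrm{sgn}(j-i)$, using that the parity of crossings equals the Pfaffian sign of a matching, and then to evaluate $\mathrm{Pf}(A)=1$ by the same cofactor recursion; the elementary induction above, however, avoids introducing Pfaffians at all.
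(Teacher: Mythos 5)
Your proof is correct, but it takes a genuinely different route from the paper. The paper identifies the left-hand side of~\eqref{eq:K2n-Intersection.1} as the Pfaffian of the skew-symmetric matrix $A_{xv,yv}=\pm\sqrt{K_{xv}K_{yv}}$ (sign determined by a linear order extracted from the cyclic order at $v$), factors out the $\sqrt{K}$'s using the scaling identity $\mathrm{Pf}(\xi_j\xi_k B_{jk})=\bigl(\prod_j\xi_j\bigr)\mathrm{Pf}(B)$, and then quotes the fact that the Pfaffian of the skew-symmetric matrix with all entries $1$ above the diagonal equals $1$, citing Stembridge for both facts. You instead pull out the (matching-independent) product $\prod_{vw}\sqrt{K_{vw}}$ directly, reduce to the purely combinatorial identity $\sum_{M\in\cc M(K_{2n})}(-1)^{\mathrm{cr}(M)}=1$, and prove it by an elementary induction on $n$: conditioning on the partner $k$ of a fixed half-edge, observing that crossings among the surviving chords are preserved under deletion of the two matched points, and using the parity argument that the number of chords of $M'$ separating $1$ from $k$ is congruent to $k$ mod $2$ independently of $M'$, so that $f(2n)=f(2n-2)\sum_{k=2}^{2n}(-1)^k=f(2n-2)$. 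All of these steps check out (the decomposition $\mathrm{cr}(M)=\mathrm{cr}(M')+c_k(M')$ has no double counting, alternation of endpoints is unaffected by deleting the endpoints of $1k$, and crossing is a genuinely cyclic notion so the choice of base point is immaterial). What your argument buys is self-containedness: it avoids importing any Pfaffian machinery. What the paper's argument buys is brevity and a conceptual link to the standard Pfaffian/dimer formalism; indeed the alternative you mention in reserve at the end is essentially the paper's proof.
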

\begin{proof}
  Choose an edge incident to the vertex $v$ to get a linear order on
  vertices adjacent to $v$ from the cyclic order on edges incident to
  $v$. Define
  \begin{equation*}
    A_{xv,yv} =
    \begin{cases}
      \phantom{-}\sqrt{K_{xv}K_{yv}} & x>y \\
      -\sqrt{K_{xv}K_{yv}} & x<y \\
      0 & x=y
    \end{cases}
  \end{equation*}
  The left-hand side of Equation~\eqref{eq:K2n-Intersection.1} is the
  Pfaffian of the skew symmetric matrix $A$. Using the fact that the
  Pfaffian of $(\xi_{j}\xi_{k}B_{jk})$ equals $\prod \xi_{j}$ times
  the Pfaffian of the matrix $B$ and that the Pfaffian of the
  skew-symmetric matrix that is all 1s above the diagonal is 1 proves
  the claim. Proofs of these facts can be found in~\cite{Stembridge1990}.
\end{proof}

Extend the weight $w_{c}$ on matchings of $\iota(v,G)$ to a weight on
sets of matchings by defining $w_{c} ( \{ M_{v} \}_{v\in V}) \equiv
\prod_{v\in V}w_{c}(M_{v})$. Define $\widetilde{\cc M}(G) \equiv \{
\{M_{v}\}_{v\in V(G)} \mid M_{v}\in\cc M(\iota(v,G))\}$.

\begin{proposition}
  \label{prop:Matching-Decomposition}
  Let $G$ be an even graph. Then
  \begin{equation}
    \prod_{xy\in E(G)}K_{xy} = \sum_{\{M_{v}\}\in\widetilde{\cc M}(G)}w_{c}(\{M_{v}\})
  \end{equation}
\end{proposition}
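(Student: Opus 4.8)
The plan is to deduce the proposition directly from the single-vertex identity of Lemma~\ref{lem:K2n-Intersection}, using that the weight $w_{c}$ is multiplicative across vertices and that the matchings at distinct vertices are chosen independently. In effect the whole statement is an exercise in the distributive law once the per-vertex computation has already been done.

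First I would expand the right-hand side. By definition a collection $(M_{v})\in\widetilde{\cc M}(H)$ amounts to an independent choice of $M_{v}\in\cc M(\iota(v,H))$ for each $v\in V(H)$, and $w_{c}((M_{v}))=\prod_{v}w_{c}(M_{v})$. Factoring the product out of the sum therefore gives
\[
  \sum_{(M_{v})\in\widetilde{\cc M}(H)}w_{c}((M_{v}))
  = \prod_{v\in V(H)}\ \sum_{M_{v}\in\cc M(\iota(v,H))}w_{c}(M_{v}).
\]
Next I would apply Lemma~\ref{lem:K2n-Intersection} to each inner sum; this is where the hypothesis that $H$ is even enters, since it guarantees that $\iota(v,H)$ has an even number of vertices and hence admits perfect matchings. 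Substituting the conclusion of the lemma turns the expression into $\prod_{v\in V(H)}\prod_{xy\in V(\iota(v,H))}\sqrt{K_{xy}}$.

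Finally I would carry out the bookkeeping that converts this double product into $\prod_{xy\in E(H)}K_{xy}$. The point is that $V(\iota(v,H))$ is exactly the set of edges of $H$ incident to $v$, so a given edge $xy\in E(H)$ occurs as a half-edge in precisely the two neighborhoods $\iota(x,H)$ and $\iota(y,H)$. Consequently each factor $\sqrt{K_{xy}}$ appears exactly twice in the double product, once from the inner product at $x$ and once from the inner product at $y$, and the two copies combine to give $K_{xy}$. This yields the left-hand side. I do not anticipate any real obstacle: the argument is purely formal given Lemma~\ref{lem:K2n-Intersection}, and the only step needing a moment's care is this last observation that each edge is shared by exactly its two endpoint-neighborhoods, which is immediate from the definition of $\iota(v,H)$ as the subgraph of the line graph induced by the edges at $v$.
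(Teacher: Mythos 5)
Your proposal is correct and is essentially identical to the paper's own proof: both expand the sum by distributivity into a product over vertices of per-vertex sums, apply Lemma~\ref{lem:K2n-Intersection} to each, and then observe that each edge $xy$ contributes $\sqrt{K_{xy}}$ exactly twice, once from $\iota(x,H)$ and once from $\iota(y,H)$.
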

\begin{proof}
 Let $V=V(G)$. By distributivity
 \begin{align}
   \sum_{\{M_{v}\}\in\widetilde{\cc M}(G)}w_{c}(\{M_{v}\}) &=
   \sum_{\{M_{v}\}\in\widetilde{\cc M}(G)} \prod_{v\in V} w_{c}(M_{v}) \\ 
   &= \prod_{v\in V} \sum_{M_{v}\in \cc M(\iota(v,G))}w_{c}(M_{v}) \\ 
   &= \prod_{v\in V}\prod_{xy\in V(\iota(v,G))}\sqrt{K_{xy}},
 \end{align}
 where the final equality follows from
 Lemma~\ref{lem:K2n-Intersection}.  Observe that each half edge $xy$
 belongs to only the induced subgraphs $\iota(x,G)$ and $\iota(y,G)$,
 so each factor $\sqrt{K_{xy}}$ is contributed exactly twice.
\end{proof}

\subsubsection{Loops and Perfect Matchings}
\label{sec:Loops-Matchings}

A \emph{walk $\gamma$ of length $n$} in a graph $G$ is a sequence
$(\gamma_{1},\ldots, \gamma_{n})$ of vertices $\gamma_{j}\in V(G)$
such that $\gamma_{j}\gamma_{j+1}\in E(G)$. The variable $n$ will
denote the length of a walk $\gamma$ when this meaning is contextually
clear. If $\gamma_{1}=a$ and $\gamma_{n}=b$ then $\gamma$ is a
\emph{walk from $a$ to $b$ in $G$}. The \emph{number of steps}
$\abs{\gamma}$ of a length $n$ walk $\gamma$ is $n-1$.  A walk is
\emph{non-backtracking} if $\gamma_{j}\neq \gamma_{j+2}$ for $1\leq
j\leq \abs{\gamma} -1$, is \emph{closed} if $\gamma_{1}=\gamma_{n}$,
and is \emph{edge-simple} if $\{\gamma_{m}, \gamma_{m+1}\} =
\{\gamma_{n}, \gamma_{n+1}\}$ implies $m=n$. A closed walk $\gamma$ is
\emph{tail free} if the walk $(\gamma_{1}, \dots,
\gamma_{n},\gamma_{2})$ is non-backtracking.

Let $\Walks(G,a,b)$ denote the collection of all walks from $a$ to $b$
in $G$. Let $\Walks(G,a)\equiv \Walks(G,a,a)$,
$\Walks(G)\equiv \cup_{a}\Walks(G,a)$, and define $\nbWalks (G,a,b)$,
$\nbWalks (G,a)$, $\tfWalks(G,a)$ and so on similarly, with the
subscript $\mathrm{nb}$ (resp.\ $\mathrm{tf}$) indicating the walks
are non-backtracking (resp.\ non-backtracking and tail free). When the
graph $G$ is contextually clear the $G$ in the notation may be
omitted.

Let $\CSWalks (G,a)$ be the set of closed edge-simple walks from $a$
to $a$ in $G$, and let $\CSWalks(G)\equiv\cup_{a}\CSWalks
(G,a)$. Define two closed edge-simple walks to be equivalent if they
are equivalent as cyclic sequences, or if one is the reversal of the
other. Denote the set of walks from $a$ to $a$ under this equivalence
relation by $\bCSWalks(a)$. A \emph{loop} is an element of
$\bCSWalks$. Loops are orientable subgraphs: choosing and orienting an
edge of a loop uniquely defines a closed walk in the equivalence class
of the loop.

\begin{definition}
  Two subgraphs $H_{1},H_{2}\subset G$ are \emph{edge disjoint}
  if $E(H_{1})\cap E(H_{2})=\emptyset$. Two subgraphs which are not
  edge disjoint are said to \emph{edge intersect}.
\end{definition}

The notion of edge intersection extends to loops, as loops are
subgraphs with additional structure.

\begin{definition}
  Let $G$ be an even graph. A \emph{decomposition} of $G$ is a set $\{C_{j}\}$ of
  edge disjoint loops such that $\cup_{j}E(C_{j})=E(G)$. Let
  $\cc D(G)$ denote the set of decompositions of an even graph $G$.
\end{definition}

\begin{lemma}
  \label{lem:Matchings-Decompositions}
  Let $G=(V,E)$ be an even graph. The collection $\cc D(G)$ of
  decompositions of $G$ is in bijective correspondence with
  $\widetilde{\cc M}(G) = \{ \{M_{v}\}_{v\in V(G)} \mid M_{v}\in\cc
  M(\iota(v,G))\} $.
\end{lemma}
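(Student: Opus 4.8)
The plan is to exhibit the bijection explicitly and verify that the two constructions are mutually inverse; the heart of the matter is the structural observation that a choice of matching at every vertex organizes the edges of $H$ into disjoint cyclic sequences.

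\textbf{From matchings to decompositions.} Given a tuple $(M_{v})_{v\in V}\in\prod_{v}\cc M(\iota(v,H))$, I would form an auxiliary graph $\mathcal G$ whose vertex set is $E(H)$ and in which $e$ and $f$ are joined (by an edge labelled $v$) whenever $\{e,f\}\in M_{v}$ for some $v$. Each $e=xy\in E(H)$ lies in exactly one matched pair of $M_{x}$ and exactly one of $M_{y}$, and since $H$ has no multiple edges these two partners meet $e$ at its two distinct endpoints and are themselves distinct. Hence $\mathcal G$ is $2$-regular, so it decomposes into disjoint cycles. Reading off a cycle $e_{1},e_{2},\dots,e_{k}$ of $\mathcal G$, consecutive edges $e_{i},e_{i+1}$ share the vertex at which they are matched, and the two $\mathcal G$-neighbours of each $e_{i}$ are matched to it at its two distinct endpoints; this yields a closed walk $v_{0}\to v_{1}\to\cdots\to v_{k}=v_{0}$ with $e_{i}=v_{i-1}v_{i}$. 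The no-multiple-edges hypothesis forces $v_{i-1}\neq v_{i+1}$, so the walk is non-backtracking, and distinctness of the $e_{i}$ makes it edge simple; thus each cycle of $\mathcal G$ is a loop in $\bCSWalks$, and the collection of all cycles is an element of $\cc D(H)$.

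\textbf{From decompositions to matchings.} Conversely, given a decomposition $\{C_{j}\}\in\cc D(H)$, the edge sets $E(C_{j})$ partition $E(H)$ and hence partition the edges incident to any fixed $v$. Each loop $C_{j}$ visits $v$ some number of times, and each visit pairs the entering edge with the exiting edge; non-backtracking and edge-simplicity guarantee that these are distinct and that every edge of $C_{j}$ at $v$ is used exactly once, so $C_{j}$ induces a partial matching on its edges at $v$ that is independent of the chosen orientation. Taking the union over all $j$ yields a perfect matching $M_{v}\in\cc M(\iota(v,H))$.

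Finally I would check that the two maps are inverse, which follows directly from the definitions: the $\mathcal G$-cycle through $e=xy$ pairs $e$ with its $M_{x}$- and $M_{y}$-partners, so the matching read off from the resulting loops at each vertex reproduces the original $(M_{v})$; and conversely the cyclic edge-order of each loop in a decomposition is exactly the cycle of $\mathcal G$ rebuilt from the induced matchings. The step I expect to require the most care is the verification that the two matching partners of an edge are distinct and sit at distinct endpoints: this is precisely where the absence of loops and multiple edges in $H$ is used, and it simultaneously underlies the non-backtracking property and the fact that $\mathcal G$ is a genuine disjoint union of cycles rather than merely a $2$-regular multigraph.
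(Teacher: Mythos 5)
Your proof is correct and takes essentially the same route as the paper: the paper's argument traces matched half-edges from vertex to vertex to produce closed walks, which is precisely the cycle decomposition of your auxiliary $2$-regular graph $\mathcal{G}$, and your write-up simply makes explicit the well-definedness points (no digons, non-backtracking, independence of orientation) that the paper leaves implicit. No gap.
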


The bijection is as follows. Choose an edge $\{xv,vw\}$ in a matching
of $\iota(v,G)$, and pick one of the half-edges, say $xv$. This
orients the edge $\{xv,vw\}$ from $xv$ to $vw$, and specifies another
oriented edge $\{vw,wz\}$ in the matching of $\iota(w,G)$. Continuing
in this manner produces a closed edge-simple walk, and hence a loop. This
process can be repeated until the entire collection of matchings is
resolved into loops. Clearly a collection of loops yields a matching,
and this defines the bijection. See Figures~\ref{fig:CSWalk-Matching} and~\ref{fig:Figure-Eight}.

\begin{figure}[h]
  \centering
  \beginpgfgraphicnamed{Figure4}
  \begin{tikzpicture}[scale=2.5]
    \draw[step=.5cm,gray,thin] (-2.5,-.5) grid (-1.5,.5);
    \node[ivertex] (i01) at (-2,0.25) {$\gamma_{3}$};
    \node[ivertex] (i10) at (-1.75,0.0) {$\gamma_{6}$};
    \node[ivertex] (i-10) at (-2.25,0.0) {$\gamma_{7}$};
    \node[ivertex] (i0-1) at (-2.0,-0.25) {$\gamma_{2}$};
    \node[ivertex] (i-1-2) at (-2.25,-.5) {$\gamma_{1}$};
    \node[ivertex] (i-1-1) at (-2.5,-.25) {$\gamma_{8}$};
    \node[ivertex] (i11) at (-1.5,.25) {$\gamma_{5}$};
    \node[ivertex] (i12) at (-1.75,.5) {$\gamma_{4}$};

    \draw[->, thick] (-1.25,0) -- (-.75,0);
    \node at (-1.0,0) [above,text centered] {$\phi$};
    
    \draw[step=.5cm,gray,thin] (-.5,-.5) grid (.5,.5);
    \node[overtex] (o01) at (0.0,0.25) {};
    \node[overtex] (o10) at (0.25,0.0) {};
    \node[overtex] (o-10) at (-0.25,0.0) {};
    \node[overtex] (o0-1) at (0.0,-0.25) {};
    \node[overtex] (o11) at (0.5,0.25) {};
    \node[overtex] (o-1-1) at (-.5,-.25) {};
    \node[overtex] (o12) at (.25,.5) {};
    \node[overtex] (o-1-2) at (-.25,-.5) {};
    \draw[thick,white] [-] (o0-1) -- (o01);
    \draw[thick,white] [-] (o10) -- (o-10);
    \draw[lgraph] [-] (o-1-2) -- (o0-1);
    \draw[sgraph] [-] (o0-1) -- (o01);
    \draw[lgraph] [-] (o01) -- (o12);
    \draw[lgraph] [-] (o12) -- (o11);
    \draw[lgraph] [-] (o11) -- (o10);
    \draw[sgraph] [-] (o10) -- (o-10);
    \draw[lgraph] [-] (o-10) -- (o-1-1);
    \draw[lgraph] [-] (o-1-1) -- (o-1-2);
  \end{tikzpicture}
  \endpgfgraphicnamed
  \caption{The left-hand side of the figure indicates a walk
    $\gamma\in\CSWalks$. The right hand side is the collection of
    perfect matchings $\phi(\gamma)$ associated to $\gamma$. In the
    matchings the dashed edges belong to a copy of $K_{4}$, while the
    dotted edges are matchings on separate copies of $K_{2}$;
    $K_{n}$ denotes the complete graph on $n$ vertices.}
  \label{fig:CSWalk-Matching}
\end{figure}
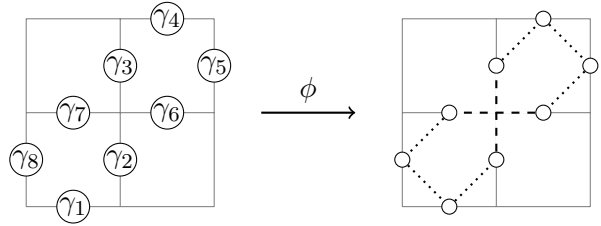

Let $\phi$ be the bijection given by
Lemma~\ref{lem:Matchings-Decompositions}. Define a weight $w$ on
decompositions by $w(\{C_{i}\}) \equiv w_{c} (\phi({\{C_{i}\}}))$, and
define $\mathrm{cr} (\{C_{i}\})$ to be the number of crossings in the
collection of matchings $\phi(\{C_{i}\})$. Representing matchings as
decompositions results in the following distinction between crossings
contained in a single loop and crossings between distinct loops.

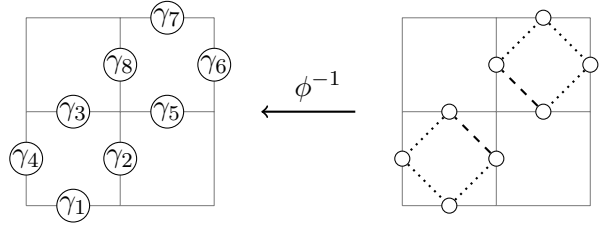
\begin{figure}[h]
  \centering
  \beginpgfgraphicnamed{Figure5}
  \begin{tikzpicture}[scale=2.5]
    \draw[step=.5cm,gray,thin] (-2.5,-.5) grid (-1.5,.5);
    \node[ivertex] (i01) at (-2,0.25) {$\gamma_{8}$};
    \node[ivertex] (i10) at (-1.75,0.0) {$\gamma_{5}$};
    \node[ivertex] (i-10) at (-2.25,0.0) {$\gamma_{3}$};
    \node[ivertex] (i0-1) at (-2.0,-0.25) {$\gamma_{2}$};
    \node[ivertex] (i-1-2) at (-2.25,-.5) {$\gamma_{1}$};
    \node[ivertex] (i-1-1) at (-2.5,-.25) {$\gamma_{4}$};
    \node[ivertex] (i11) at (-1.5,.25) {$\gamma_{6}$};
    \node[ivertex] (i12) at (-1.75,.5) {$\gamma_{7}$};

    \draw[<-, thick] (-1.25,0) -- (-.75,0);
    \node at (-1.0,0) [above,text centered] {$\,\,\,\,\phi^{-1}$};
    
    \draw[step=.5cm,gray,thin] (-.5,-.5) grid (.5,.5);
    \node[overtex] (o01) at (0.0,0.25) {};
    \node[overtex] (o10) at (0.25,0.0) {};
    \node[overtex] (o-10) at (-0.25,0.0) {};
    \node[overtex] (o0-1) at (0.0,-0.25) {};
    \node[overtex] (o11) at (0.5,0.25) {};
    \node[overtex] (o-1-1) at (-.5,-.25) {};
    \node[overtex] (o12) at (.25,.5) {};
    \node[overtex] (o-1-2) at (-.25,-.5) {};
    \draw[lgraph] [-] (o-1-2) -- (o0-1);
    \draw[sgraph] [-] (o0-1) -- (o-10);
    \draw[lgraph] [-] (o01) -- (o12);
    \draw[lgraph] [-] (o12) -- (o11);
    \draw[lgraph] [-] (o11) -- (o10);
    \draw[sgraph] [-] (o10) -- (o01);
    \draw[lgraph] [-] (o-10) -- (o-1-1);
    \draw[lgraph] [-] (o-1-1) -- (o-1-2);
  \end{tikzpicture}
  \endpgfgraphicnamed
  \caption{The right-hand side of the figure indicates perfect
    matchings of a collection of induced subgraphs. The dashed edges
    belong to a copy of $K_{4}$ while the dotted edges belong to
    distinct copies of $K_{2}$. The left hand side is the
    corresponding decomposition.}
  \label{fig:Figure-Eight}
\end{figure}

\begin{definition}
  Let $C_{1},C_{2}\in\bCSWalks$, with $C_{1}$ and $C_{2}$ edge
  disjoint. The \emph{self intersection} $\si{C_{1}}$ of $C_{1}$ is
  given by
  \begin{equation}
    \si{C_{1}}\equiv \mathrm{cr}(C_{1}).
  \end{equation}
  The \emph{mutual intersection} $\mi{C_{1}}{C_{2}}$ of edge disjoint loops $C_{1}$
  and $C_{2}$ is
  \begin{equation}
    \mi{C_{1}}{C_{2}} \equiv \mathrm{cr}(\{C_{1},C_{2}\}) -
    \mathrm{cr} (C_{1}) - \mathrm{cr} (C_{2}).
  \end{equation}
\end{definition}

A crossing only involves two edges, and hence if $\{C_{j}\}\in \cc
D(G)$ is a decomposition
\begin{equation}
  \label{eq:All-Intersections}
    \mathrm{cr}(\{C_{j}\}) = \sum_{j<k} \mi{C_{j}}{C_{k}} + \sum_{j}\si{C_{j}}.
\end{equation}

Equation~\eqref{eq:All-Intersections} allows the weight on
decompositions to be rewritten in a more geometrically intuitive form:
\begin{lemma}
  \label{lem:Decomposition-Weight}
  The weight on decompositions can be written as
  \begin{equation}
  \label{eq:Decomposition-Weight1-Star}
  w(\{C_{j}\}) =
  \prod_{j}(-1)^{\si{C_{j}}} \prod_{j<k}(-1)^{\mi{C_{j}}{C_{k}}}
  \prod_{xy\in \cup E(C_{j})}K_{xy}.
\end{equation}
\end{lemma}

\subsection{Factorization of the Loop Weights}
\label{sec:Loop-Weights}

The representation of the weight on decompositions given by
Equation~\eqref{eq:Decomposition-Weight1-Star} is not suited to
combinatorial analysis, as the weight does not factor as a product of
weights of the individual loops. This situation is remedied by
Lemma~\ref{lem:Surface-Factorization} and
Corollary~\ref{cor:Decomposition-Weight-Prime} which provide a
factorization. We begin by presenting some preliminary definitions
about curves in surfaces.  Lemma~\ref{lem:Surface-Factorization} is
significantly simpler in the planar case, and to help orient the
reader the planar result,
Proposition~\ref{prop:Weight-Factorization-Planar}, is presented first.

\subsubsection{Preliminaries on Curves}
\label{sec:preliminaries-curves}

Recall that the background graph $G$ is always assumed to be properly
embedded in a oriented surface $\surface$ carrying a Riemannian
metric.

\begin{definition}
  A \emph{regular closed curve} in $\surface$ is a smooth map
  $\gamma\colon \cb{a,b}\to\surface$ such that
  \begin{enumerate}
  \item $\gamma(a)=\gamma(b)$,
  \item $\gamma^{\prime}(a)=\gamma^{\prime}(b)$,
  \item$\gamma^{\prime}(t)\neq 0$ for $a\leq t\leq  b$.
  \end{enumerate}
\end{definition}

Regular closed curves will frequently be called regular curves for the
sake of brevity.

\begin{definition}
  Let $\Delta_{2}(\gamma)=\{x\in\surface \mid \textrm{there exist
    exactly two times $t_{1}$,$t_{2}$ such that $\gamma(t_{i})=x$}\}$.
  $\Delta_{2}(\gamma)$ is the set of \emph{double points} of
  $\gamma$.  For $x\in\Delta_{2}(\gamma)$, let $t_{x}^{1}<t_{x}^{2}$
  denote the first and second visits to the double point $x$.
\end{definition}

Points $x\in \Delta_{2}(\gamma)$ for $\gamma$ a regular curve can be classified:
\begin{definition}
  Let $\gamma$ be regular curve, $x\in\Delta_{2}(\gamma)$. Then
  \begin{enumerate}
  \item $x$ is a \emph{positive crossing} if
    $(\gamma^{\prime}(t_{x}^{1}),\gamma^{\prime}(t_{x}^{2}))$ is a
    positively oriented basis for $T_{x}\surface$,
  \item $x$ is a \emph{negative crossing} if
    $(\gamma^{\prime}(t_{x}^{1}),\gamma^{\prime}(t_{x}^{2}))$ is a
    negatively oriented basis for $T_{x}\surface$,
  \item $x$ is a point of \emph{self tangency} if
    $(\gamma^{\prime}(t_{x}^{1}),\gamma^{\prime}(t_{x}^{2}))$ is not a
    basis of $T_{x}\surface$.
  \end{enumerate}
\end{definition}

Throughout this section it will be assumed curves are generic, meaning
all self intersections are double points, and all double points
are either positive or negative crossings.

\subsubsection{Planar Factorization}
\label{sec:planar-factorization}

To describe the idea of factorization in the simplest context, assume
in this section that the graph $G$ embeds properly in
$\R^{2}$. Further, and without loss of generality~\cite{Fary1948},
assume that all edges of the embedded graph are straight lines.

\begin{definition}
  The \emph{turning number} $\tau(\gamma)$ of a closed smooth curve
  $\gamma$ is the net number of rotations made by
  $\gamma^{\prime}(t)/\abs{\gamma(t)}$ around the unit circle as $t$
  ranges from $a$ to $b$.
\end{definition}

Let $N_{\pm}(\gamma)$ denote the number of positive/negative crossings
of a curve $\gamma$. A curve $\gamma$ in $\R^{2}$ is said to be
\emph{supported by the line $L$} if $\gamma(a)\in L$ and $\gamma$ lies
in the closure of a single component of $\R^{2}\setminus L$. A theorem
of Whitney on the double points of plane curves will be used to
exploit the assumption that $G$ is planar.

\begin{theorem}[Whitney~\cite{Whitney1937}]
  \label{thm:Whitney-DPT}
  Let $\gamma$ be a regular closed curve in the upper half plane
  supported by the line $y=0$. Then the signed number of
  self intersections is given by
  \begin{equation}
    \label{eq:1}
    N_{+}(\gamma) - N_{-}(\gamma) = \tau(\gamma) \pm 1
  \end{equation}
  where the sign $+$ is chosen if the horizontal component of
  $\gamma^{\prime}(a)$ is positive, and the sign $-$ is chosen
  otherwise.
\end{theorem}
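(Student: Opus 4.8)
The plan is to prove the identity by Whitney's \emph{secant map} argument, which becomes especially clean under the support hypothesis. Reparametrizing if necessary, take the base point $\gamma(a)=\gamma(b)$ to be a point at which $\gamma$ touches the line $y=0$; since the remainder of the curve lies in the open upper half plane this point is a local minimum of the second coordinate, so $\gamma'(a)$ is horizontal and its horizontal component is nonzero. On the triangle $\Sigma=\set{(s,t)\mid a\le s\le t\le b}$ define the secant map $v\colon\Sigma\to S^{1}$ by $v(s,t)=(\gamma(t)-\gamma(s))/\abs{\gamma(t)-\gamma(s)}$ when $\gamma(s)\ne\gamma(t)$, and by $v(t,t)=\gamma'(t)/\abs{\gamma'(t)}$ on the diagonal. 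A short computation with the first-order expansion of $\gamma$ shows that $v$ extends continuously to the whole boundary $\partial\Sigma$, with value $-\gamma'(a)/\abs{\gamma'(a)}$ at the corner $(a,b)$.

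The first step is to identify the interior singularities of $v$ with the double points of $\gamma$: the points of $\Sigma$ at which $v$ is undefined are exactly the pairs $s<t$ with $\gamma(s)=\gamma(t)$, and by genericity these are finitely many and nondegenerate. Linearizing $\gamma$ at such a pair $(s_{0},t_{0})$, the secant $\gamma(t)-\gamma(s)$ is to leading order the linear map $(s-s_{0},t-t_{0})\mapsto (t-t_{0})\gamma'(t_{0})-(s-s_{0})\gamma'(s_{0})$, whose local winding number is the sign of $\det[\gamma'(s_{0}),\gamma'(t_{0})]$ up to a fixed global sign. Hence each double point contributes $+1$ or $-1$ to the degree of $v$ exactly according to whether it is a positive or a negative crossing, and additivity of degree gives that the winding of $v$ along $\partial\Sigma$ equals $\pm\ob{N_{+}(\gamma)-N_{-}(\gamma)}$.

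The second step is to compute the same winding directly around the three sides of $\partial\Sigma$. On the diagonal $s=t$ the map $v$ is the unit tangent, so this side contributes precisely the turning number $\tau(\gamma)$. On the leg $s=a$ the vectors $v(a,t)$ are the directions from the base point to $\gamma(t)$; because the base point lies on $y=0$ with the curve above it, these directions remain in the closed upper semicircle of $S^{1}$ and run from $+\gamma'(a)/\abs{\gamma'(a)}$ to $-\gamma'(a)/\abs{\gamma'(a)}$, thus sweeping out exactly a half turn. Symmetrically, on the leg $t=b$ the directions into the base point remain in the closed lower semicircle and sweep out a second half turn. Together the two legs contribute $\pm1$, and the sign is governed by the direction in which these half turns are made, which is fixed by the sign of the horizontal component of $\gamma'(a)$.

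Equating the two evaluations of the winding of $v$ along $\partial\Sigma$ gives $N_{+}(\gamma)-N_{-}(\gamma)=\tau(\gamma)\pm1$ with the asserted sign rule. I expect the main obstacle to be the sign bookkeeping, of which there are three intertwined pieces: fixing compatible orientations of $\partial\Sigma$ and of the small loops encircling the singularities so that degree is additive with consistent signs; matching, via the determinant above, the local degree of $v$ at a double point to the positive/negative crossing dichotomy; and reading off the direction of each half turn along the two legs from the sign of the horizontal component of $\gamma'(a)$. Once these conventions are pinned down consistently the $\pm1$ emerges as claimed, and evaluating both sides on the circle tangent to $y=0$, where $N_{+}-N_{-}=0$, serves as a convenient calibration.
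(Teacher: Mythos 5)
Your proposal is essentially Whitney's original secant-map argument; the paper gives no proof of this statement at all, deferring entirely to the citation of Whitney (1937), so you are supplying an argument where the paper has none. The skeleton is correct and complete in outline: the secant map $v$ on the triangle $\Sigma$, the identification of its interior singularities with the double points of $\gamma$ and of their local degrees with crossing signs, the contribution $\tau(\gamma)$ from the diagonal, and --- the one place the support hypothesis enters --- the confinement of the secants from and to the base point to closed half-circles, which forces each leg of $\partial\Sigma$ to contribute exactly a half turn. The continuity of $v$ at the corner $(a,b)$ with value $-\gamma'(a)/\abs{\gamma'(a)}$ along both legs is also correctly asserted, and your observation that the base point may be taken at a tangency with $y=0$, where regularity forces a nonzero horizontal tangent, is the right reading of the hypothesis.

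The one substantive loose end is exactly the sign bookkeeping you defer, and it does not come out ``as claimed'' under the paper's stated conventions. The local degree of $v$ at $(s_{0},t_{0})$ is $\mathrm{sign}\det[-\gamma'(s_{0})\mid\gamma'(t_{0})]=-\,\mathrm{sign}\det[\gamma'(s_{0})\mid\gamma'(t_{0})]$, so with the paper's definition of a positive crossing the counterclockwise boundary degree equals $N_{-}(\gamma)-N_{+}(\gamma)$; meanwhile the counterclockwise traversal of $\partial\Sigma$ runs the two legs so that, for $\gamma'(a)$ pointing in the $+x$ direction, they contribute $-1$. This yields $N_{+}(\gamma)-N_{-}(\gamma)=-\tau(\gamma)+1$ rather than $\tau(\gamma)+1$. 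Your own proposed calibration detects this: the counterclockwise circle tangent to $y=0$ from above has $N_{+}-N_{-}=0$, $\tau=1$, and rightward initial tangent, and $0\neq 1+1$. The discrepancy is a single global sign (equivalently a swap of $N_{+}$ and $N_{-}$, i.e.\ of the crossing orientation convention); it is invisible mod $2$, which is all the paper ever uses of this theorem, and it appears to originate in the statement as transcribed in the paper rather than in your argument. So: right proof and right strategy, but actually run the calibration --- it shows the asserted sign rule needs to be corrected, not merely confirmed.
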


\begin{definition}
  Let $C$ be a loop, and let $\gamma\in\CSWalks(G)$ be a
  representative member of $C$. The \emph{turning number} of $C$ is
  defined by
   \begin{equation}
     \label{eq:Turn-Def}
    \tau(C) \equiv \frac{1}{2\pi}\sum_{j=1}^{n-1}
    \turn(\gamma_{j},\gamma_{j+2}) \mod 2, \qquad
    \gamma_{n+1}\equiv\gamma_{2}.
  \end{equation}
  where $\turn(\gamma_{j},\gamma_{j+2})$ is the exterior angle of the
  polygonal segment $(\gamma_{j},\gamma_{j+1},\gamma_{j+2})$. See
  Figure~\ref{fig:ExtAngle}.
\end{definition}

\begin{figure}[h]
  \centering
    \beginpgfgraphicnamed{Figure6}
    \begin{tikzpicture}
      \draw (0,1) node[anchor=west] {$x$} -- (0,2) node[anchor=west]
      {$y$}; \draw (0,2) -- (-1.2,2.9) node[anchor=west] {$z$};
      \draw[dashed] (0,2) -- (0,3); \draw[->] (0,2.7) arc (90:135:.76)
      node[anchor=west] {$\,\theta$};
    \end{tikzpicture}
    \endpgfgraphicnamed
    \caption{The turning angle $\theta=\turn(x,z)$ is the exterior
      angle of the polygonal curve consisting of the segments $xy$ and $yz$.}
  \label{fig:ExtAngle}
\end{figure}
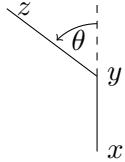

The turning number is well-defined due to the cyclic
nature of the sum and the fact that reversing the direction of the
walk $\gamma$ only changes the sign of the sum, which is irrelevant mod
2. 

\begin{proposition}
  \label{prop:Weight-Factorization-Planar}
  Let $G$ be an even graph properly embedded in the plane. The weight
  $w(\{C_{j}\})$ of a decomposition of $G$ factors over the loops:
    \begin{align}
    \label{eq:1}
    w(\{C_{j}\}) &= \prod_{j}(-1)^{\si{C_{j}}}
    \prod_{j<k}(-1)^{\mi{C_{j}}{C_{k}}} \prod_{xy\in \cup
      E(C_{j})}K_{xy} \\
    &= \prod_{j}\ob{(-1)^{\tau(C_{j})+1}\prod_{xy\in E(C_{j})}K_{xy}}.
  \end{align}
\end{proposition}
\begin{proof}
  The turning number of a loop is equal to the turning number of a
  smoothed version of the piecewise-smooth curve defined by a walk
  representing the loop. Hence Whitney's theorem holds for loops mod
  2. By Lemma~\ref{lem:Crossing-Intersecting} the number of double
  points equals the number of crossings. It remains to show
  $\prod_{j<k}(-1)^{\mi{C_{j}}{C_{k}}}=1$. This follows from the fact
  that $H_{1}(\R^{2},\Z)$ is trivial: the antisymmetry of the
  intersection form implies that two closed loops in the plane must
  intersect an even number of times.
\end{proof}

\subsubsection{The Turning Number on Surfaces}
\label{sec:Turning-Surfaces}

The proof of Proposition~\ref{prop:Weight-Factorization-Planar} used
planarity in two places: once to compute the intersections between two
distinct loops and once in the definition of the turning number. The
first use of planarity is easily generalized by using the
intersection form. To generalize Whitney's formula is less
straightforward. 

Before describing the replacement for Whitney's formula
on a surface, it may be helpful to provide a small amount of
context. In~\cite{Whitney1937}, Whitney characterized the regular
homotopy classes of curves in $\R^{2}$ --- the classes correspond to
the integers $\Z$, and the integer corresponding to a curve is its
turning number. Later, Smale classified the regular homotopy classes
of curves in Riemannian manifolds~\cite{Smale1958}. Smale's
classification did not provide a generalization of the turning number
of Whitney, which spurred several articles searching for such a
quantity~\cite{Reinhart1960,Reinhart1963, Chillingworth1972}.

The turning number in the plane relies on the existence of a canonical
horizontal direction. The initial generalizations of the turning
number compared the rotation of the tangent to a curve with that of a
chosen vector field on the surface. In~\cite{CairnsMcIntyre1993}
Cairns and McIntyre showed that the turning number defined in terms of
a vector field could in fact be computed without choosing a vector
field, provided one is only interested in the turning number mod
$\chi(\surface)$, where $\chi(\surface)$ is the Euler characteristic
of the surface. In what follows this approach will be taken as the
definition of the turning number.

For a surface $\surface$ of genus $g$, let $\{e^{k}_{i}\mid
i\in\{1,2\}, k\in\{1,\ldots,g\}\}$ be a set of smooth curves
representing a symplectic basis of $H_{1}(\surface,\Z_{2})$. That is,
the curves $e^{k}_{i}$ represent generators of
$H_{1}(\surface,\Z_{2})$ such that $\ab{e^{k}_{i},e^{\ell}_{j}} =
\delta_{\ell k}(1-\delta_{ij})$, where $\ab{\cdot,\cdot}$ is the
bilinear intersection pairing on $H_{1}(\surface,\Z_{2})$. For details
on the intersection pairing see~\cite[Chapter 3]{FarkasKra1980}. For
this paper all that is needed is that the intersection pairing counts
the number of points at which two curves in general position intersect
modulo 2.

Let $\gamma$ be a generic regular curve, and let
$C(\gamma)$ denote the set of connected components of $\surface$ after
$\gamma$ and the curves $e^{k}_{i}$, $1\leq k\leq g$, $1\leq i\leq 2$,
are removed. The elements of $C(\gamma)$ are called the
\emph{regions} defined by $\gamma$.

\begin{lemma}[name=Lemma 2 of~\cite{CairnsMcIntyre1993}]
  \label{lem:labelling}
  Let $\gamma$ be a closed regular curve, $\gamma =
  \sum_{k=1}^{g}\sum_{i=1}^{2}n^{k}_{i}e^{k}_{i}$ in homology, and
  suppose $x$ is a point contained in one of the regions $C\in
  C(\gamma)$. There is a labelling by integers of the regions defined
  by $\gamma$ such that the label of a region to the left of $\gamma$
  is 1 greater than the label of the region to the right of $\gamma$,
  and such that the label of a region to the left of $e^{k}_{i}$ is
  $n^{k}_{i}$ less than the label of the region to the right of
  $e^{k}_{i}$. Moreover, if the region containing $x$ is labelled
  zero, this labelling is unique.
\end{lemma}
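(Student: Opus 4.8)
The plan is to construct the labelling explicitly as a path-integral of the prescribed jumps, and to reduce its consistency to a single homological identity. Fix the region $C_{0}\in C(\gamma)$ containing the point $x$. For any region $C$, choose a generic (transverse) path $\delta$ from $x$ to a point of $C$ and set
\[
  f(C) \equiv \ab{\delta,\gamma} - \sum_{k=1}^{g}\sum_{i=1}^{2} n^{k}_{i}\,\ab{\delta,e^{k}_{i}},
\]
where each $\ab{\delta,\cdot}$ denotes the signed count of transverse crossings of $\delta$ with the indicated curve, the sign convention being that a crossing from the right side to the left side counts $+1$. Since a path remaining within a single region meets none of the curves $\gamma,e^{k}_{i}$, the quantity $f$ is unchanged if the endpoint is moved within $C$, so $f$ is a genuine function of the region provided I show it does not depend on the chosen $\delta$.

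The key step is path-independence. Two generic paths $\delta,\delta'$ from $x$ to $C$ differ by a closed loop $\ell$ (after closing them up by a segment inside $C$, which meets no curve), and by bilinearity of the intersection pairing
\[
  f_{\delta}(C) - f_{\delta'}(C) = \ab{\ell,\gamma} - \sum_{k,i} n^{k}_{i}\ab{\ell,e^{k}_{i}} = \ab{\ell,\ \gamma - \textstyle\sum_{k,i} n^{k}_{i} e^{k}_{i}}.
\]
By hypothesis $\gamma = \sum_{k,i} n^{k}_{i} e^{k}_{i}$ in homology, so the second argument is null-homologous; as the intersection pairing descends to homology (the pairing of any cycle with a boundary vanishes), the difference is $0$. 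Thus $f$ is independent of $\delta$ and descends to a well-defined integer labelling of the regions. I would stress that this single identity absorbs, at once, the two kinds of consistency one would otherwise check separately: the cancellation around each crossing point of the arrangement (a small loop crosses each curve through it twice, with opposite signs) and the cancellation around each loop winding through a handle of $\surface$.

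It remains to read off the jumps and uniqueness. For two regions sharing an arc of $\gamma$ (respectively of $e^{k}_{i}$), I compute both labels with paths agreeing except for a short final segment crossing that arc once; the definition of $f$ then gives that the left label exceeds the right by $1$ (respectively is smaller by $n^{k}_{i}$), which is precisely the asserted jump condition, the asymmetry between $\gamma$ and $e^{k}_{i}$ coming from the explicit minus sign and coefficient $n^{k}_{i}$. Taking the empty path for $C_{0}$ yields $f(C_{0})=0$, the stated normalization. For uniqueness, if $f_{1},f_{2}$ both satisfy all jump conditions, then $f_{1}-f_{2}$ has zero jump across every arc, so it extends to a locally constant, hence globally constant, function on the connected surface $\surface$; agreeing on $C_{0}$, they coincide.

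The one genuinely delicate point is the sign bookkeeping: I must fix orientation conventions for the tangent framing and for $\ab{\cdot,\cdot}$ so that ``the region to the left'' is matched consistently with the right-to-left $+1$ convention, and so that the homology relation and the pairing identity above are applied in the same coefficient ring. Once these conventions are pinned down, the homological identity in the second paragraph makes every remaining assertion an immediate consequence of the bilinearity and homological invariance of the intersection pairing.
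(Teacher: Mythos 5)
Your construction is correct and complete. The paper itself gives no argument for this lemma---its ``proof'' is only the citation to Cairns and McIntyre---so you have supplied the missing content rather than paralleled an existing one. Your route (define the label of a region as $\ab{\delta,\gamma}-\sum_{k,i}n^{k}_{i}\ab{\delta,e^{k}_{i}}$ for a transverse path $\delta$ from the basepoint, reduce well-definedness to the vanishing of the intersection pairing against the null-homologous cycle $\gamma-\sum_{k,i}n^{k}_{i}e^{k}_{i}$, then read off the jumps and get uniqueness from connectedness) is the standard degree-theoretic argument and is essentially how the cited reference proceeds. Two points worth making explicit if this were written into the paper: first, the argument must be run in $H_{1}(\surface,\Z)$, not $H_{1}(\surface,\Z_{2})$ as the surrounding text might suggest, since the labels and the coefficients $n^{k}_{i}$ are integers and the pairing identity is needed over $\Z$; you flag this, correctly. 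Second, your well-definedness step silently handles the degenerate possibility that a single region abuts both sides of an arc (which would make the jump conditions contradictory): the small loop crossing that arc once and returning inside the region would pair nontrivially with a boundary, which is impossible, so the homology hypothesis rules the configuration out. That is a feature of your approach, not a gap.
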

\begin{proof}
  See~\cite{CairnsMcIntyre1993}.
\end{proof}

In what follows let $\ell_{x}\colon C(\gamma)\to \Z$ be the
unique labelling of regions $C\in C(\gamma)$ such that the region
containing $x$ is labelled $0$.

\begin{definition}
  Assume $x$ is not in the image of $\gamma$. Define $A_{j}$ by
  \begin{equation*}
    A_{j} = \mathrm{cl}\ob{\bigcup_{C\colon\ell_{x}(C)= j}C},
  \end{equation*}
  where $\mathrm{cl}(X)$ denotes the closure of $X$. The \emph{turning
    number} of $\gamma$ is defined to be
  \begin{equation}
    \label{eq:Turning-Definition}
    \tau_{x}(\gamma) =
    \begin{cases}
      \sum_{j\in\Z}j\chi(A_{j}) \mod \abs{\chi(\surface)}, & \surface
      \neq \bb T^{2} \\
      \sum_{j\in\Z}j\chi(A_{j}),  & \surface = \bb T^{2}
    \end{cases}.
  \end{equation}
  where $\chi(X)$ denotes the Euler characteristic of $X$.
\end{definition}

While it is not obvious, this definition does replicate the turning
number of a curve in the plane modulo 2.  An immediate consequence
of~\Cref{thm:General-Intersection-Theorem} below is that the turning
number modulo two is independent of the point $x$ chosen; for this
reason we will omit the subscript $x$ in $\tau_{x}$. 

\subsubsection{Intersections of Curves on the Torus $\bb T^2$}
\label{sec:Torus-Intersections}

For brevity, in this section let $e_{1},e_{2}$ be the standard generators
of $H_{1}(\bb T^{2},\Z_{2})$, i.e., generators which intersect exactly
once. The aim of this section is:

\begin{theorem}
  \label{thm:Intersections-Torus}
  Let $\gamma$ be a regular curve in $\bb T^{2}$. Then
  \begin{equation*}
    N_{+}(\gamma) - N_{-}(\gamma) = \tau(\gamma) + a(\gamma)b(\gamma)
    + a(\gamma) + b(\gamma) + 1 \mod 2,
  \end{equation*}
  where $a(\gamma)=\ab{\gamma,e_{1}}$ and $b(\gamma) = \ab{\gamma,e_{2}}$.
\end{theorem}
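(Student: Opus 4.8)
The plan is to prove the equivalent statement that the mod-$2$ quantity
\[
f(\gamma) \equiv \big(N_{+}(\gamma)-N_{-}(\gamma)\big)+\tau(\gamma)+a(\gamma)b(\gamma)+a(\gamma)+b(\gamma)+1 \pmod 2
\]
vanishes for every regular closed curve $\gamma$ on $\bb T^{2}$. Since $a(\gamma)b(\gamma)+a(\gamma)+b(\gamma)+1=(a(\gamma)+1)(b(\gamma)+1)$, the claim reduces to the planar Whitney formula $N_{+}-N_{-}\equiv\tau+1$ of Theorem~\ref{thm:Whitney-DPT} when $\gamma$ is null-homologous ($a=b=0$), which is a useful sanity check. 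My strategy is to observe that $f$ is a regular homotopy invariant, to evaluate it on one convenient representative of each regular homotopy class, and to choose those representatives so that $f$ is manifestly additive over them.

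First I would check that $f$ descends to regular homotopy classes. The quantities $a(\gamma)=\ab{\gamma,e_1}$ and $b(\gamma)=\ab{\gamma,e_2}$ are homological, hence homotopy invariant; $\tau(\gamma)$ is a regular homotopy invariant (it is an honest integer since $\chi(\bb T^2)=0$); and $N_{+}-N_{-}$ is invariant mod $2$ because a generic regular homotopy alters it only through self-tangency moves, which create or destroy a pair of crossings and change $N_{+}-N_{-}$ by $0$ or $\pm2$, and triple-point moves, which permute crossings without changing their signs. Since $\bb T^2$ is parallelizable, its unit tangent bundle is $\bb T^2\times S^1$, so the regular homotopy classes are identified with $\pi_1(\bb T^2\times S^1)=\Z^3$, the three coordinates being $a$, $b$, and $\tau$.

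Next I would exhibit a generating family of representatives. The \emph{atoms} are the standard geodesic generators $e_1,e_2$ and their reverses, realizing $(a,b,\tau)=(0,\pm1,0)$ and $(\pm1,0,0)$ with no self-crossings, together with small embedded circles realizing $(0,0,\pm1)$; a direct finite check gives $f=0$ on each atom. Every class is reached from these by iterated band sum, since band sum adds the homology coordinates freely and, aided by the circles, attains every value of $\tau$. The crux is the behaviour of $f$ under a band sum $\gamma\#\delta$. For a suitable thin band one has $(N_{+}-N_{-})(\gamma\#\delta)=(N_{+}-N_{-})(\gamma)+(N_{+}-N_{-})(\delta)+\ab{\gamma,\delta}$ (the inter-strand crossings contribute their signed count, while the band itself contributes only canceling pairs), $\tau(\gamma\#\delta)\equiv\tau(\gamma)+\tau(\delta)+1\pmod 2$ (the two admissible bands give $\tau_\gamma+\tau_\delta\pm1$, of the same parity), and additivity of homology. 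Substituting these and using the torus identity $\ab{\gamma,\delta}\equiv a(\gamma)b(\delta)+a(\delta)b(\gamma)\pmod 2$, all cross terms cancel and one finds $f(\gamma\#\delta)\equiv f(\gamma)+f(\delta)\pmod 2$. Combined with $f=0$ on atoms and regular homotopy invariance, this forces $f\equiv0$.

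The main obstacle I anticipate is controlling the turning number under band sum using the region-labelling definition of $\tau$ in Equation~\eqref{eq:Turning-Definition}, since on the torus there is no global horizontal frame and $\tau$ is defined through the Euler characteristics of the regions $A_j$ rather than a naive winding of the tangent; verifying $\tau(\gamma\#\delta)\equiv\tau(\gamma)+\tau(\delta)+1$ and $f=0$ on the circle atoms directly from that definition is the delicate computation. A secondary point needing care is making the classification rigorous and confirming that band sums of atoms exhaust all classes. An alternative that sidesteps the band-sum bookkeeping is to cut $\bb T^2$ along $e_1\cup e_2$ into a square, develop $\gamma$ into a family of arcs, and apply Whitney's planar theorem while tracking the turning contributed at the seams; this makes the $a$, $b$, and $ab$ terms arise from the wrapping of strands across the two seams, but the seam bookkeeping is at least as involved.
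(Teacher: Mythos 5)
Your overall architecture (reduce to a regular homotopy invariant $f$, classify the classes via $\pi_1$ of the unit tangent bundle, verify $f=0$ on generators of the classes under some composition operation) is a genuinely different route from the paper, which argues \emph{downward}: it inducts on $\abs{\Delta_{2}(\gamma)}$ by performing surgery at a double point, using Lemma~\ref{lem:Surgery-Points} (additivity of $\tau$, of homology, and the bookkeeping of double points under surgery), with base case the simple curves handled by Lemma~\ref{lem:Torus-Simple}. The downward induction has the advantage that it automatically reaches every curve. Your upward construction does not, and this is where the proposal has a genuine gap.

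The exhaustion claim --- ``every class is reached from these by iterated band sum'' --- is false with your stated conventions. Track parities: if you band-sum $p$ geodesic atoms ($\tau=0$ each) and $q$ small circles ($\tau=\pm1$ each) using $p+q-1$ embedded bands, your own rule $\tau(\gamma\#\delta)\equiv\tau(\gamma)+\tau(\delta)+1$ gives $\tau\equiv q+(p+q-1)\equiv p+1\pmod 2$, while the homology class $me_{1}+ne_{2}$ forces $m+n\equiv p\pmod 2$, i.e.\ $a+b\equiv p\pmod 2$. Hence every curve you can build satisfies $\tau+a+b\equiv 1\pmod 2$, and adding more circles only shifts $\tau$ by even amounts. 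So exactly half the regular homotopy classes are never reached: for instance the figure eight, with $(a,b,\tau)=(0,0,0)$, and a generator $e_{1}$ with a single kink, with $(a,b,\tau)=(0,1,\pm1)$. (You can also see this directly: an embedded band between two disjoint embedded curves creates an even number of new double points, so no band sum of your atoms is regularly homotopic to the figure eight, which has one.) The fix is cheap --- adjoin the kink move (a Reidemeister-I addition), which changes $N_{+}-N_{-}$ by $1$ and $\tau$ by $\pm1$ and therefore preserves $f$, and which together with band sums does exhaust $\Z^{3}$ --- but as written the argument verifies the formula on only half the classes. The remaining ingredients (additivity of $f$ under embedded band sum, the cancellation $\ab{\gamma,\delta}\equiv a(\gamma)b(\delta)+a(\delta)b(\gamma)$, the values of $f$ on the atoms via Lemma~\ref{lem:Torus-Simple}) check out, and you correctly flag that the real work is relating the Euler-characteristic definition of $\tau$ in Equation~\eqref{eq:Turning-Definition} to the flat-framing winding number, which the paper handles through Remark~\ref{rem:Turning-Geo-Curvature}.
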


The theorem is immediate for $\gamma$ null-homotopic, as this reduces
to Whitney's theorem. For curves that are not null-homotopic, the
proof of the theorem relies on an operation on curves called
\emph{surgery}, which is introduced here in the context of any
surface.

If $\gamma$ is a regular curve, $\gamma
\restr{\cb{a^{\prime},b^{\prime}}}$ will denote the restriction of
$\gamma$ to the interval $\cb{a^{\prime},b^{\prime}}$. Given regular
curves $\gamma_{i}\colon\cb{a_{1},b_{1}}\to \surface$ for
$i\in\{1,2\}$ such that $\gamma_{1}(b_{1})=\gamma_{2}(a_{2})$, let
$\gamma_{1}\gamma_{2}$ denote their concatenation.

\begin{definition}
  Let $\gamma$ be a regular curve and $x\in\Delta_{2}(\gamma)$. The
  result of \emph{surgery at $x$ on $\gamma$} is
  $\{\eta_{1},\eta_{2}\} = \{ \gamma\restr{\cb{0,t_{x}^{1}}}
  \gamma\restr{\cb{t_{x}^{2},1}},\gamma\restr{\cb{t_{x}^{1},t_{x}^{2}}}\}$. See
  Figure~\ref{fig:Surgery-Local}.
\end{definition}

\begin{definition}
  If $\gamma_{1},\gamma_{2}$ are two curves, let $\Delta_{2}
  (\{\gamma_{1}, \gamma_{2}\})$ denote the set of points at which
  $\gamma_{1}$ and $\gamma_{2}$ intersect.
\end{definition}

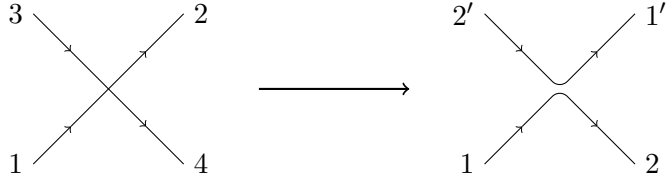
\begin{figure}[h]
  \centering
  \beginpgfgraphicnamed{Figure7}
  \begin{tikzpicture}
    \draw[->] (0,0) --(.5,.5);
    \draw[->] (.5,.5) -- (1.5,1.5);
    \draw (1.5,1.5) -- (2,2);
    \draw[->] (0,2) -- (.5,1.5);
    \draw[->] (.5,1.5) -- (1.5,.5);
    \draw (1.5,.5) -- (2,0);
    \node at (0,0) [anchor=east] {$1$};
    \node at (2,2) [anchor=west] {$2$};
    \node at (0,2) [anchor=east] {$3$};
    \node at (2,0) [anchor=west] {$4$};

    \draw[->,thick] (3,1) -- (5,1);

    \draw[->] (6,0) -- (6.5,.5);
    \draw[->] (6.5,.5) [rounded corners]  --  (7,1) -- (7.5,.5);
    \draw (7.5,.5) -- (8,0);
    \draw[->] (6,2) -- (6.5,1.5);
    \draw[->] (6.5,1.5) [rounded corners] -- (7,1) --(7.5,1.5);
    \draw (7.5,1.5) -- (8,2);
    \node at (6,0) [anchor=east] {$1$};
    \node at (8,0) [anchor=west] {$2$};
    \node at (8,2) [anchor=west] {$1^{\prime}$};
    \node at (6,2) [anchor=east] {$2^{\prime}$};
  \end{tikzpicture}
  \endpgfgraphicnamed
  \caption{The effect of surgery at a double point. The left hand side
    depicts segments of the original curve, with the numbering
    indicating the order the segments are traversed in. The right hand
    side depicts the segments of the curves resulting from surgery. }
  \label{fig:Surgery-Local}
\end{figure}

The following observations about the surgery operation are central to
what follows. Let $\eta^{x}_{1}(\gamma),\eta^{x}_{2}(\gamma)$ be the
curves that result from performing surgery on a regular curve $\gamma$
at the point $x\in\Delta_{2}(\gamma)$.

\begin{lemma}
  \label{lem:Surgery-Points}
  Let $\gamma$ be a generic regular curve, and $x\in\Delta_{2}
  (\gamma)$ be a double point. Then
  \begin{enumerate}
  \item $\Delta_{2}(\gamma) = \{x\}\cup
    \Delta_{2}(\eta^{x}_{1}(\gamma))\cup
    \Delta_{2}(\eta_{2}^{x}(\gamma))\cup \Delta_{2}(\{
    \eta_{1}^{x}(\gamma),\eta_{2}^{x}(\gamma)\})$,
  \item
    $\cb{\gamma}=\cb{\eta_{1}^{x}(\gamma)}+\cb{\eta_{2}^{x}(\gamma)}$
    in $H_{1}(\surface ,\Z_{2})$,
  \item $\tau(\gamma) = \tau(\eta_{1}^{x}(\gamma)) +
    \tau(\eta_{2}^{x}(\gamma))$.
 \end{enumerate}
\end{lemma}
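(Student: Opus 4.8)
The plan is to treat the three assertions in order, since the third rests on the first two; all three concern only the single double point $x$ at which surgery is performed, so the arguments are essentially local. For the first assertion I would reason entirely in the parameter domain. Normalising $\gamma$ to $\cb{0,1}$, the cut times $t_x^1<t_x^2$ split $\cb{0,1}$ into the middle interval $I_2=\cb{t_x^1,t_x^2}$, whose image is $\eta_2^x(\gamma)$, and the complementary set $I_1=\cb{0,t_x^1}\cup\cb{t_x^2,1}$, which after gluing $0\sim 1$ is the domain of $\eta_1^x(\gamma)$. Surgery moves no point of the curve and only resolves the crossing at $x$, so $\mathrm{image}(\gamma)=\mathrm{image}(\eta_1^x(\gamma))\cup\mathrm{image}(\eta_2^x(\gamma))$ and no intersection is created or destroyed away from $x$. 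Any $y\in\Delta_2(\gamma)$ with $y\ne x$ has visit times $s_1<s_2$, and classifying $(s_1,s_2)$ by membership in $I_1$ or $I_2$ gives exactly the three cases: both in $I_2$ (a self-intersection of $\eta_2^x$), both in $I_1$ (a self-intersection of $\eta_1^x$), or one in each (a point of $\Delta_2(\{\eta_1^x,\eta_2^x\})$); genericity rules out further visits, and conversely every self- or mutual intersection of the $\eta_i^x$ is manifestly a double point of $\gamma$. Finally $x$ is the junction of $\eta_1^x$ and the base point of $\eta_2^x$, hence lies on both, and is recorded by the summand $\{x\}$; this yields the stated set equality.

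The second assertion is a chain-level identity. Surgery merely regroups the oriented segments traversed by $\gamma$ into the two closed curves $\eta_1^x(\gamma)$ and $\eta_2^x(\gamma)$ without altering the orientation of any segment, so as $1$-chains $\gamma=\eta_1^x(\gamma)+\eta_2^x(\gamma)$; both summands are cycles, and passing to homology gives $\cb{\gamma}=\cb{\eta_1^x(\gamma)}+\cb{\eta_2^x(\gamma)}$ in $H_1(\surface,\Z)$.

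For the third assertion I would use the region definition of $\tau$ and the freedom (Proposition~\ref{prop:Turning-Unique}) to fix one basepoint $x_0$ off all curves. Let $\ell^{(1)},\ell^{(2)}$ be the labellings of Lemma~\ref{lem:labelling} for $\eta_1^x,\eta_2^x$. Read on the common subdivision $\surface\setminus(\eta_1^x\cup\eta_2^x\cup\bigcup e^k_i)$, the sum $\ell^{(1)}+\ell^{(2)}$ jumps by $+1$ across each of $\eta_1^x,\eta_2^x$ and, by the second assertion, by $n^k_i(\eta_1^x)+n^k_i(\eta_2^x)=n^k_i(\gamma)$ across each $e^k_i$; hence it is a legitimate labelling for the combined configuration. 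Since $\chi$ is additive, the weighted sum $\sum_j j\,\chi(A_j)$ is linear in the labelling, so $\sum_j j\,\chi(A_j^{\text{comb}})=\tau(\eta_1^x(\gamma))+\tau(\eta_2^x(\gamma))$. It then remains to compare the combined configuration with $\gamma$: the two have the same image except at $x$, where $\gamma$ has a transverse crossing and the combined configuration has its orientation-compatible resolution (surgery joins each incoming strand to an outgoing one, so the two resulting arcs do not cross). Because the resolution respects orientation, the left/right increments of the labelling match those of $\gamma$ outside a small disk about $x$, and a local computation of $\sum_j j\,\chi(A_j)$ inside that disk — of the same CW/handshake type as in the proof of Proposition~\ref{prop:Turning-Unique} — shows the contribution is unchanged. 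Combining the two steps gives $\tau(\gamma)=\tau(\eta_1^x(\gamma))+\tau(\eta_2^x(\gamma))$ in both cases of the definition of $\tau$.

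The hard part is exactly this last local reconciliation at $x$: one must verify that replacing the four-valent crossing — whose four sectors carry labels $m,m+1,m+2,m+1$ — by the oriented resolution, which merges the two label-$(m+1)$ sectors through the channel between the non-crossing arcs, leaves $\sum_j j\,\chi(A_j)$ invariant, while tracking the closure convention in the definition of the $A_j$. The crucial point is that the resolution is the \emph{orientation-compatible} one, so there is no $\pm1$ defect of the kind appearing in Whitney's self-intersection formula. The planar instances reassure me that additivity holds on the nose: a figure eight resolves into oppositely oriented loops with $0=(+1)+(-1)$, and a limaçon resolves into two like-oriented loops with $2=1+1$, so the local count should carry no correction term.
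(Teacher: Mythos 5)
Your overall architecture for part~3 is the same as the paper's: both arguments rest on the observation that surgery preserves the orientation of every segment, so that the region labelling induced by $\gamma$ is the sum of the labellings induced by $\eta_1^x$ and $\eta_2^x$, and both then reduce the problem to (a) an additivity statement for the weighted Euler-characteristic sum over the common refinement of regions and (b) a local check at the resolved crossing $x$. Parts~1 and~2 the paper dismisses as immediate consequences of the definitions, and your parameter-domain and chain-level arguments for them are fine.

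The one step you assert too quickly is the claim that ``since $\chi$ is additive, $\sum_j j\,\chi(A_j)$ is linear in the labelling.'' Ordinary Euler characteristic is not additive over the decomposition $A_j=\bigcup_{k+\ell=j}A^1_k\cap A^2_\ell$, because the $A_j$ are \emph{closures} of unions of open regions: a boundary arc shared by two regions is counted once if they carry the same label and twice if they do not, and a vertex lies in one, two, or three of the $A_j$ according to how many of the incident curves carry a nonzero jump there --- this is exactly the bookkeeping the paper must do in the proof of Proposition~\ref{prop:Turning-Unique} even for the special case of shifting the labelling by a constant, where establishing $\sum_j\chi(A_j)=\chi(\surface)$ already requires the handshake count. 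So linearity of $\sum_j j\,\chi(A_j)$ in the labelling is true but not free, and it is the global half of the difficulty, not only the disk about $x$. The paper sidesteps it by converting each $\chi(A_j)$ via Gauss--Bonnet into $\int_{A_j}K\,dA+\int_{\partial A_j}k_g\,ds$: the curvature integrals are honestly additive over the refinement (the boundaries have measure zero, giving Equation~\eqref{eq:Aj-Curvature}), and the boundary terms telescope to comparing $\int_\gamma k_g\,ds$ with $\int_{\eta_1^x}k_g\,ds+\int_{\eta_2^x}k_g\,ds$, whose equality is precisely the local orientation-compatible resolution at $x$ that you correctly single out as the crux (Figure~\ref{fig:Surgery-Local}). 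To make your version complete you would need either to import that Gauss--Bonnet step or to carry out the CW/handshake count for the weighted sum globally, not only inside the small disk about $x$.
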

\begin{proof}
  The first two claims are straightforward consequences of the
  definitions. For the third, what is required is that
  \begin{equation}
    \sum_{j}j\chi(A_{j}) = \sum_{j}j\chi(A_{j}^{1})  + \sum_{j}j\chi(A_{j}^{2}),
  \end{equation}
  where $A^{i}_{j}$ is the region labelled $j$ under the labelling due
  to $\eta^{x}_{i}$, and $A_{j}$ the region labelled $j$ under the
  labelling due to $\gamma$. Observe that the labelling from $\gamma$
  gives the sum of the labelings from $\eta_{1}^{x}$ and
  $\eta_{2}^{x}$ as the surgery operation does not change the
  orientation of any segments of the curve. Hence
  \begin{equation}
    \label{eq:Aj-Decomposition}
    A_{j}=\bigcup_{k+\ell=j} A^{1}_{k}\cap A^{2}_{\ell}.
  \end{equation}
  and, letting $K$ denote the Gaussian curvature, 
  \begin{equation}
    \label{eq:Aj-Curvature}
    j\int_{A_{j}}K\, dA = \mathop{\sum_{k,\ell}}_{{k+\ell=j}}\ob{
      k\int_{A^{1}_{k}\cap A^{2}_{\ell}}
      K\, dA + \ell\int_{A^{1}_{k}\cap A^{2}_{\ell}} K\, dA}.
  \end{equation}
  The Gauss-Bonnet formula applied to each region $A_{j}$ yields
  \begin{equation}
    \sum_{j}j\chi(A_{j}) = \sum_{j} j \cb{\int_{A_{j}}K\,dA +
      \int_{\partial A_{j}}k_{g}\,ds},
  \end{equation}
  where $k_{g}$ is the geodesic curvature. Hence it suffices to show
  \begin{equation}
    \label{eq:Curve-Segments}
    \sum_{j}j\int_{\partial A_{j}}k_{g}\,ds =
    \sum_{j}j\sum_{i=1}^{2}\int_{\partial A^{i}_{j}}k_{g}\,ds.
  \end{equation}

  A given segment of a curve appears in the sum over geodesic
  curvatures twice, with opposite orientation. As the labels increase
  when crossing curves to the right, the equality of the contributions
  from integrals along $\gamma$ in \eqref{eq:Curve-Segments} is
  equivalent to
  \begin{equation*}
    \int_{\gamma}k_{g}\,ds =
    \int_{\eta^{x}_{1}}k_{g}\,ds + \int_{\eta^{x}_{2}}k_{g}\,ds,
  \end{equation*}
  which holds as surgery does not modify the orientation or shape of
  the curve outside of a neighbourhood of $x$. A local calculation
  near $x$ shows that the change in geodesic curvature due to surgery
  is zero; see Figure~\ref{fig:Surgery-Local}. Similar reasoning,
  using that the labels increase by $n^{k}_{i}$ across the generator
  $e^{k}_{i}$, shows that the contributions
  to~\eqref{eq:Curve-Segments} from integrals along the generators
  are also equal.
\end{proof}

\begin{remark}
  \label{rem:Turning-Geo-Curvature}
  If the generating curves $e^{k}_{i}$ are chosen to be geodesics, the
  proof of~\Cref{lem:Surgery-Points} shows that the turning
  number of a curve $\gamma$ is the integrated geodesic curvature of
  $\gamma$, along with an additional term involving the total
  curvature of the regions of $C(\gamma)$.
\end{remark}

In the case of the torus $\bb T^{2}$, the turning number is given by

\begin{lemma}[name=Theorem~1 of~\cite{Reinhart1959}]
  \label{lem:Torus-Simple}
  Let $\gamma$ be a simple regular curve in $\bb T^{2}$. Then
  \begin{equation*}
    \tau(\gamma) = 
    \begin{cases}
      \pm 1 & \gamma \textrm{ nullhomotopic}\\
      0 & \gamma\textrm{ homotopically nontrivial}
    \end{cases}.
  \end{equation*}
\end{lemma}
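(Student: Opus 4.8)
The plan is to evaluate the turning number directly from its definition, Equation~\eqref{eq:Turning-Definition}, splitting into the two homology cases. The unifying idea is the geodesic--curvature reformulation of Remark~\ref{rem:Turning-Geo-Curvature}: endow $\bb T^2$ with the flat metric and take the generators $e_1,e_2$ to be straight geodesics. Since the Gaussian curvature vanishes identically, the region terms $\int_{A_j}K\,dA$ drop out, and the Gauss--Bonnet computation carried out in the proof of Lemma~\ref{lem:Surgery-Points} collapses (up to the factor $2\pi$) to the statement that $\tau(\gamma)$ equals the total geodesic curvature $\int_\gamma k_g\,ds$. On the flat torus the latter measures the rotation of $\gamma'$ against the global parallel framing $(\partial_x,\partial_y)$, so $\tau(\gamma)$ is exactly the degree of the Gauss map $t\mapsto \gamma'(t)/\abs{\gamma'(t)}$. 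This has two consequences I will use: $\tau(\gamma)$ is an integer that is a regular-homotopy invariant, and it does not depend on the chosen symplectic basis. I then reduce each case to a convenient representative.

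For $\gamma$ nullhomotopic the class is trivial, so in Lemma~\ref{lem:labelling} all $n^k_i=0$ and the generators induce no change of label; only $\gamma$ does. A simple nullhomotopic curve bounds a disk $D$, whose complement is a genus-one surface with one boundary circle, so the labelling takes only two values, the regions inside $D$ and those outside differing by one. Taking $x$ outside $D$ gives $A_0=\mathrm{cl}(\bb T^2\setminus D)$ and $A_{\pm1}=\overline D$, whence
\begin{equation*}
  \tau(\gamma)=\sum_j j\,\chi(A_j)=(\pm1)\,\chi(\overline D)=\pm1,
\end{equation*}
the sign recording the orientation of $\gamma$ relative to $D$; one checks $\chi(\overline D)+\chi(\mathrm{cl}(\bb T^2\setminus D))=1+(-1)=0=\chi(\bb T^2)$, consistent with Proposition~\ref{prop:Turning-Unique}. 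Equivalently this is the Umlaufsatz, i.e.\ Theorem~\ref{thm:Whitney-DPT} with $N_+=N_-=0$, applied to the lift of $\gamma$ to the disk it bounds.

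For $\gamma$ homotopically nontrivial it represents a primitive class, so I may extend it to a symplectic basis by choosing $e_1$ a disjoint parallel copy of $\gamma$ and $e_2$ a dual curve with $\ab{e_1,e_2}=1$; the basis-independence noted above makes this harmless. Then $\gamma=1\cdot e_1+0\cdot e_2$, so crossing $e_1$ changes the label by $1$ and crossing $e_2$ by $0$. The two parallel nonseparating curves $\gamma\cup e_1$ cut $\bb T^2$ into two annuli, and $e_2$ meets each of $\gamma,e_1$ exactly once; the two resulting cells receive consecutive labels, and crucially each $A_j$ is the \emph{closure} of a single open cell, which re-glues the arc removed by $e_2$ and is therefore a closed annulus rather than a disk. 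Hence $\chi(A_j)=0$ for both values of $j$ and
\begin{equation*}
  \tau(\gamma)=\sum_j j\,\chi(A_j)=0.
\end{equation*}
Alternatively, $\gamma$ is isotopic to the straight-line geodesic in its class; an isotopy is in particular a regular homotopy through immersions, and the geodesic has constant tangent, hence turning number $0$, so the regular-homotopy invariance from the first paragraph gives $\tau(\gamma)=0$.

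The step I expect to require the most care is the nontrivial case, specifically the Euler-characteristic bookkeeping: the naive reflex is to call each planar-looking cell a disk with $\chi=1$, which would give a nonzero and base-point--dependent answer. The point is that $A_j$ is a closure, so adding back the arc of $e_2$ restores a full circle factor and turns each cell into an annulus. Verifying this, together with justifying the replacement of the ambient basis by one adapted to $\gamma$ (that is, the basis-independence of $\tau$, for which the rotation-number interpretation of the first paragraph is the cleanest argument), are the two places where I will need to argue carefully rather than merely compute.
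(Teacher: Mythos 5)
Your proof is correct, but it reaches the conclusion by a genuinely different route than the paper. The paper also invokes Remark~\ref{rem:Turning-Geo-Curvature} to reduce to the flat metric, but then handles the homotopically nontrivial case by lifting $\gamma$ to $\R^{2}$, finding a supporting line for one period of the lift, closing it up with two vertical segments and a translated chord into an auxiliary \emph{simple} closed planar curve $\eta$, and applying Whitney's theorem to $\eta$; the four right-angle corners contribute the $\pm 1$ that cancels $\tau(\eta)=\pm1$. You instead either (a) evaluate $\sum_j j\chi(A_j)$ directly with a basis adapted to $\gamma$, where the key observation that each $A_j$ is a closed annulus (not a disk, because the closure re-glues the arc of $e_2$) is exactly right and gives $\chi(A_j)=0$; or (b) isotope $\gamma$ to the flat geodesic in its class and use regular-homotopy invariance of the rotation number. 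The paper's argument has the advantage of working entirely with the fixed symplectic basis in the definition, so it never needs to know that $\tau$ is basis-independent; your computation (a) does need that, and you correctly flag it, but note that Remark~\ref{rem:Turning-Geo-Curvature} identifies $\tau$ with the integrated geodesic curvature only for \emph{geodesic} generators, whereas your adapted $e_1$ (a parallel push-off of $\gamma$) need not be geodesic --- so the basis-independence you invoke is not quite covered by the remark as stated. Your alternative (b) sidesteps this entirely and is arguably cleaner than the paper's supporting-line construction, at the cost of importing two standard but unproved facts (simple essential curves on $\bb T^{2}$ are isotopic to geodesics; the rotation number is a regular-homotopy invariant). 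Your treatment of the nullhomotopic case by direct evaluation ($A_{\pm1}=\overline D$ with $\chi=1$) is a nice self-contained alternative to the paper's bare citation of Whitney.
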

\begin{proof}
  When $\gamma$ is nullhomotopic the claim follows from
  Theorem~\ref{thm:Whitney-DPT}. Suppose $\gamma$ is homotopically
  nontrivial, let $\tilde\gamma$ be a lift of $\gamma$ to
  $\R^{2}$, and for convenience assume $\gamma$ has period one.

  Without loss of generality it can be assumed that the torus carries a
  flat metric. By Remark~\ref{rem:Turning-Geo-Curvature} the turning
  number of $\gamma$ is equivalent to the integrated geodesic
  curvature of $\gamma$ for a surface with a flat metric, and so it
  suffices to compute the integrated geodesic curvature of a period of
  the lift $\tilde\gamma$.

  As $\gamma$ is regular, $\tilde\gamma$ is regular. By changing
  coordinates if necessary, it can be assumed that $\tilde\gamma(0)$
  and $\tilde\gamma(1)$ lie on the $x$-axis. There is then a line of
  support $L$ to $\tilde\gamma\restr{\cb{0,1}}$ parallel to the $x$
  axis; define $0<t^{\star}<1$ to be a point such that
  $\tilde\gamma(t^{\star})$ is on $L$, with
  $\tilde\gamma^{\prime}(t^{\star})$ parallel to $L$. As translation
  by $\tilde\gamma(1)-\tilde\gamma(0)$ is an automorphism of the
  cover, it follow that $L$ is a line of support to
  $\tilde\gamma\restr{\cb{t^{\star},t^{\star}+1}}$. Let $R$ be the
  segment of $L$ with endpoints $\tilde\gamma(t^{\star})$,
  $\tilde\gamma(t^{\star}+1)$ translated vertically by $(0,1)$. See
  Figure~\ref{fig:lem:Torus-Simple}.

  Let $\eta$ be the curve with segments
  $\tilde\gamma\restr{\cb{t^{\star},t^{\star}+1}}$, the vertical
  segments from the endpoints of this curve to $R$, and $R$
  itself. Then, orienting this curve to agree with
  $\tilde\gamma^{\prime}(t^{\star}+1)$ yields $\tau(\eta) =
  \tau(\tilde\gamma\restr{\cb{t^{\star},t^{\star}+a}})$ plus or minus
  $\frac{1}{2\pi}(4\frac{\pi}{2})$, with $+$ if the curve is oriented
  counterclockwise and $-$ otherwise. As $\eta$ is simple by
  construction, Theorem~\ref{thm:Whitney-DPT} implies $\tau(\eta)=\pm
  1$, which completes the proof.
\end{proof}

\begin{figure}[]
  \centering
  \beginpgfgraphicnamed{Figure8}
  \begin{tikzpicture}
    \draw[thick,>->] [rounded corners] (0,0) -- (.25,0) -- (.5,.5) -- (.25,.75)
    -- (.5,1) -- (1, 1.25) -- (1.25,1.25) -- (1.5,1) -- (1.75, .5) --
    (1.5,.25) -- (1.75, .125) -- (1.875,0) -- (2.25,0);
    \draw[dashed] (-1,1.26) -- (6,1.26);
    \draw[dashed] (1.125,1.76) -- (3.375,1.76);
    \node at (6,1.25) [anchor=west] {$L$};
    \node at (3.5,1.75) [anchor=west] {$R$};
    \draw[thick,->] [rounded corners] (2.25,0) -- +(.25,0) -- +(.5,.5) -- +(.25,.75)
    -- +(.5,1) -- +(1, 1.25) -- +(1.25,1.25) -- +(1.5,1) -- +(1.75, .5) --
    +(1.5,.25) -- +(1.75,.125) -- +(1.875,0) --+(2.25,0);
    \draw[dotted,thick] (1.125,1.25) --(1.125,1.75);
    \draw[dotted,thick] (3.375,1.25) -- (3.375,1.75);
    \node at (0,0) [anchor=north] {$\tilde\gamma(0)$};
    \node at (2.25,0) [anchor=north] {$\tilde\gamma(1)$};
    \node at (4.5,0) [anchor=north] {$\tilde\gamma(2)$};
  \end{tikzpicture}
  \endpgfgraphicnamed
  \caption{Illustration of the construction of the curve $\eta$ used
    in the proof of Lemma~\ref{lem:Torus-Simple}. The curve $\eta$
    consists of the segment of the heavy black line between the dotted
    vertical lines, the dotted vertical lines themselves, and the
    dashed line segment $R$.}
  \label{fig:lem:Torus-Simple}
\end{figure}
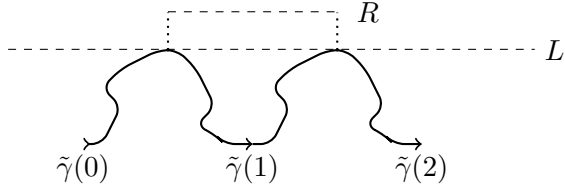

We can now prove~\Cref{thm:Intersections-Torus}.

\begin{proof}[Proof of~\Cref{thm:Intersections-Torus}]
  The proof utilizes Lemma~\ref{lem:Surgery-Points} and is by
  induction on the cardinality of the set of double points. The base
  case of $\Delta_{2}(\gamma)=\emptyset$ follows from
  Lemma~\ref{lem:Torus-Simple} along with the observation that if
  $(a,b)\neq(0,0)$ mod $(2,2)$, then $ab+a+b=1$ mod $2$.

  Suppose the theorem holds for $\abs{\Delta_{2}(\gamma)}=k$, and let
  the result of surgery at $x\in\Delta_{2}(\gamma)$ be the curves
  $\eta_{1}$, $\eta_{2}$. As in the statement of the theorem, let
  $a(\gamma)=\ab{\gamma, e_{1}}$, $b(\gamma)=\ab{\gamma,
    e_{2}}$. Then
  \begin{align*}
    N_{+}(\gamma) - N_{-}(\gamma) &=  \sum_{i=1}^{2} (N_{+}(\eta_{i}) -
    N_{-}(\eta_{i}) ) + \ab{\eta_{1},\eta_{2}} + 1 \mod 2\\
      &= \sum_{i=1}^{2} (\tau(\eta_{i}) +
      a(\eta_{i})b(\eta_{i})+a(\eta_{i}) + b(\eta_{i})) +
      a(\eta_{1})b(\eta_{2}) + a(\eta_{2})b(\eta_{1}) + 1 \mod 2\\
      &= \sum_{j=1}^{2}\tau(\eta_{j}) + 
      (a(\eta_{1})+a(\eta_{2}))(b(\eta_{1})+b(\eta_{2})) +
      \sum_{j=1}^{2}a(\eta_{j}) + \sum_{j=1}^{2}b(\eta_{j}) + 1 \mod 2\\
      &= \tau(\gamma) + a(\gamma)b(\gamma) + a(\gamma) + b(\gamma) +
      1, \mod 2
  \end{align*}
  where the first line is by part~1 of Lemma~\ref{lem:Surgery-Points}, the
  second is by the induction hypothesis and definition of the
  intersection form, the third is a rearrangement,
  and the fourth line uses parts~2
  and~3 of Lemma~\ref{lem:Surgery-Points}. 
\end{proof}

\subsubsection{General Surfaces}
\label{sec:Gen-Surface}

This section presents a formula for the number of intersections of a
regular closed curve on $\surface$ modulo 2. The result is
essentially~\cite[Proposition 4]{Reinhart1963}.

Suppose $\surface$ is a genus $g$ surface. By the classification of
closed surfaces there are $g$ disjoint embedded closed curves
$C_{i}\subset\surface$ such that $\surface = \ob{\coprod_{i=1}^{g} \bb
  T^{2}_{i}}\coprod \cc S^{2}_{g}$, where $\bb T^{2}_{i}$ is a
torus with a disk removed, $\cc S^{2}_{g}$ is the 2-sphere with
$g$ disjoint disks removed, and $\coprod$ indicates disjoint union.

The strategy to find a formula for the number of self intersections
of a curve on a genus $g$ surface is to decompose the curve into
several curves, each of which lies entirely in one of $\bb
T^{2}_{1},\ldots,\bb T^{2}_{g}$ or $\cc S^{2}_{g}$. Analyzing this
procedure will yield
\begin{theorem}
  \label{thm:General-Intersection-Theorem}
  Let $\gamma$ be a regular closed curve on $\surface$. Then
  \begin{equation}
    \label{eq:General-Formula}
    N_{+}(\gamma) - N_{-}(\gamma) = \tau(\gamma) +
    \sum_{i=1}^{g}\big((a_{i}(\gamma)b_{i}(\gamma) + a_{i}(\gamma) +
    b_{i}(\gamma)\big) + 1 \mod 2,
  \end{equation}
  where $a_{i}(\gamma) = \ab{e^{i}_{1},\gamma}$, $b_{i}(\gamma) =
  \ab{e^{i}_{2},\gamma}$. 
\end{theorem}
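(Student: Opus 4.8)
The plan is to reduce the genus $g$ case to the torus case established in Theorem~\ref{thm:Intersections-Torus}, leveraging the surgery machinery of Lemma~\ref{lem:Surgery-Points} together with the handle decomposition $\surface = \ob{\coprod_{i=1}^{g}\bb T^{2}_{i}}\coprod \cc S^{2}_{g}$ introduced above. The key structural idea is that the correction term on the right-hand side of Equation~\eqref{eq:General-Formula} is a \emph{sum} over handles of the torus correction $a_{i}b_{i}+a_{i}+b_{i}$, plus a single global $+1$; this additivity strongly suggests an inductive decomposition of $\gamma$ into pieces each localized to a single handle $\bb T^{2}_{i}$ or to the base sphere $\cc S^{2}_{g}$.

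First I would make precise the decomposition of the curve. Using the separating curves $C_{i}$ that cut off each handle, I would argue that $\gamma$ can be cut (by surgeries at its intersection points with the $C_{i}$, or by a homotopy to general position followed by such cuts) into a collection of arcs, each lying entirely within one of $\bb T^{2}_{1},\ldots,\bb T^{2}_{g}$ or $\cc S^{2}_{g}$. The crucial accounting is then to track how each ingredient of the formula behaves under this cutting: by part~1 of Lemma~\ref{lem:Surgery-Points} the signed self-intersection number $N_{+}-N_{-}$ picks up the mutual intersection $\ab{\eta_{1},\eta_{2}}$ of the two surgered pieces (mod $2$) at each cut, by part~3 the turning number $\tau$ is \emph{additive} under surgery, and by part~2 the homology class — and hence each intersection number $a_{i}(\gamma)=\ab{e^{i}_{1},\gamma}$, $b_{i}(\gamma)=\ab{e^{i}_{2},\gamma}$ — is additive. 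Because the generators $e^{i}_{1},e^{i}_{2}$ are supported in the $i$-th handle, a piece living in handle $j$ contributes to $a_{i},b_{i}$ only when $i=j$, which is exactly the mechanism that turns the global formula into a sum of independent torus contributions.

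The induction itself would mirror the proof of Theorem~\ref{thm:Intersections-Torus}: induct on the number of double points (or on the number of handles $\gamma$ genuinely visits), with the base case being a curve confined to a single $\bb T^{2}_{i}$ (handled by the torus theorem) or to $\cc S^{2}_{g}$ (handled by Whitney's theorem, Theorem~\ref{thm:Whitney-DPT}, after noting $\cc S^{2}_{g}$ embeds in $\R^{2}$). At the inductive step, resolving one crossing or one cut across a $C_{i}$ reduces to curves of strictly smaller complexity; substituting the inductive hypotheses and collecting terms, the cross terms $a_{i}(\eta_{1})b_{i}(\eta_{2})+a_{i}(\eta_{2})b_{i}(\eta_{1})$ from the mutual intersections combine with the per-piece quadratic terms to reconstruct $(a_{i}(\eta_{1})+a_{i}(\eta_{2}))(b_{i}(\eta_{1})+b_{i}(\eta_{2})) = a_{i}(\gamma)b_{i}(\gamma)$ mod $2$, exactly as in the single-torus computation but now summed over $i$. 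The several $+1$ terms arising from the individual surgeries must be reconciled against the single global $+1$; tracking this mod-$2$ bookkeeping of the constant term — ensuring that the internal $+1$'s cancel in pairs and leave precisely one — is the most delicate part.

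The main obstacle I anticipate is controlling the interaction between the separating curves $C_{i}$ and the homological generators $e^{i}_{1},e^{i}_{2}$: one must verify that cutting along $C_{i}$ does not alter the intersection pairings with generators in \emph{other} handles, and that the turning number defined via Equation~\eqref{eq:Turning-Definition} on $\surface$ restricts correctly to the turning numbers of the pieces on the subsurfaces (this is where Remark~\ref{rem:Turning-Geo-Curvature}, identifying $\tau$ with integrated geodesic curvature when the $e^{k}_{i}$ are geodesics, is likely the cleanest tool, since geodesic curvature is manifestly additive and local). Making the surgery-based decomposition compatible with all three additivity statements simultaneously, while keeping the constant term honest mod $2$, is the crux; once that bookkeeping is set up, the result follows by assembling the per-handle torus formulas.
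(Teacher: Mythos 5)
Your plan is essentially the paper's own proof: decompose $\surface$ along the separating curves $C_{i}$, use the surgery operation of Lemma~\ref{lem:Surgery-Points} to split $\gamma$ into closed pieces each confined to one handle $\bb T^{2}_{i}$ or to $\cc S^{2}_{g}$, apply Theorem~\ref{thm:Intersections-Torus} (resp.\ Whitney) to each piece, and recombine via bilinearity of the intersection form while tracking the created crossings and the per-piece $+1$'s mod $2$. The only point you leave vague --- that surgery is defined at double points of $\gamma$, so one must first pull an adjacent entrance/exit pair across each other to \emph{create} a crossing near $C_{i}$ before surgering --- is exactly the small trick the paper supplies, and the rest of your accounting matches.
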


\begin{remark}
  \label{rem:Turning-Independence}
  The left hand side and homology terms in~\eqref{eq:General-Formula}
  are independent of the point $x$ used to define the turning number.
  It follows that the turning number modulo two is independent of the
  basepoint and orientation of a closed curve.
\end{remark}

\begin{proof}[Proof of~\Cref{thm:General-Intersection-Theorem}]
  Consider the sequence of entrances and exits of a subsurface $\bb
  T^{2}_{i}$ by the curve $\gamma$. There is a cyclic order on these
  events as $C_{i}$ is a circle. Choose a pair of neighbouring events,
  one an entrance, one an exit. Locally the entrance and exit look like
  two straight lines; pulling one of these curves across the other
  creates an intersection inside $\bb T^{2}_{i}$ and an intersection
  outside $\bb T^{2}_{i}$. Performing surgery at the intersection
  inside $\bb T^{2}_{i}$ results in a curve contained in $\bb T^{2}_{i}$,
  and the remainder of $\gamma$ can be smoothly deformed to lie on the
  complement of $\bb T^{2}_{i}$.

  Iterating the above procedure yields a collection
  $\{\gamma_{j}\}_{j=1}^{r+1}$ of closed curves, each contained in a
  component of $\surface\setminus\{C_{i}\}_{i=1}^{g}$, and hence
  \begin{equation*}
    N_{+}(\gamma) - N_{-}(\gamma) = \sum_{j=1}^{r+1}\ob{N_{+}(\gamma_{j})
    - N_{-}(\gamma_{j})} +\sum_{j<k}\ab{\gamma_{j},\gamma_{k}} +  r \mod 2.
  \end{equation*}

  Assume $\gamma_{r+1}$ is the curve remaining on $\cc S^{2}_{g}$. To
  compute $N_{+}(\gamma_{j})-N_{-}(\gamma_{j})$ for $1\leq j\leq r$
  consider the formula that results from viewing $\gamma_{j}$ to be a
  curve on a torus, as filling in the disk removed from the torus does
  not alter the set of intersections. On a torus a curve bounding the
  cut out disk has Euler characteristic $\pm 1$, while on $\surface$ a
  curve bounding the disk has Euler characteristic $\abs{2-2(g-1)-1}$.
  These two numbers agree mod $2$ so
  \begin{equation}
    N_{+}(\gamma_{j})-N_{-}(\gamma_{j}) = \tau(\gamma_{j}) + \sum_{i=1}^{g}\ob{
    a_{i}(\gamma_{j})b_{i}(\gamma_{j}) + a_{i}(\gamma_{j}) +
    b_{i}(\gamma_{j})} + 1 \mod 2,
  \end{equation}
  where the turning number is computed on $\surface$. Similar
  reasoning shows that the same formula holds for the curve
  $\gamma_{r+1}$. Summing these formulas and arguing as in the proof
  of~\Cref{thm:Intersections-Torus} gives the result:
  \begin{align*}
    N_{+}(\gamma)-N_{-}(\gamma) &= \sum_{j=1}^{r+1}N_{+}(\gamma_{j}) -
    N_{-}(\gamma_{j}) + \sum_{j<k}\ab{\gamma_{j},\gamma_{k}} + r\mod 2 \\
    &= \sum_{j=1}^{r+1}\ob{\tau(\gamma_{j}) + \sum_{i=1}^{g}
      \ob{a_{i}(\gamma_{j})b_{i}(\gamma_{j}) + a_{i}(\gamma_{j}) +
      b_{i}(\gamma_{j})} + 1} + \sum_{i=1}^{g} \sum_{j\neq k}
    a_{i}(\gamma_{j}) b_{i}(\gamma_{k}) + r\mod 2 \\
    &= \sum_{j=1}^{r+1}\tau(\gamma_{j}) + \sum_{i=1}^{g}
    \ob{\sum_{j=1}^{r+1}a_{i}(\gamma_{j})\sum_{j=1}^{r+1}b_{i}(\gamma_{j})
    +\sum_{j=1}^{r+1}a_{i}(\gamma_{j}) +
    \sum_{j=1}^{r+1}b_{i}(\gamma_{j})} + 1 \mod 2\\
    &= \tau(\gamma) + \sum_{i=1}^{g}\ob{a_{i}(\gamma)b_{i}(\gamma) +
    a_{i}(\gamma) + b_{i}(\gamma)} + 1 \mod 2.\qedhere
  \end{align*}
\end{proof}

\subsubsection{Factorization on Surfaces}
\label{sec:fact-surf}

In this section we apply the previous results to count the number of
intersections, mod $2$, of a family of curves on a surface
$\surface$. Recall that $\Delta_{2}(\{\gamma_{1},\gamma_{2}\})$ is the
set of points in common between curves $\gamma_{1}$ and
$\gamma_{2}$. We will need the following property of the intersection
form, see~\cite[Chapter 3]{FarkasKra1980} for details.
\begin{lemma}
  \label{lem:Intersection-Form}
  Let $\gamma_{1},\dots,\gamma_{k}$ be smooth curves in general
  position. Then
  \begin{equation}
    \label{eq:Intersection-Form}
    \sum_{j<k}\abs{\Delta_{2}(\{\gamma_{j},\gamma_{k}\})} =
    \sum_{j<k}\ab{\gamma_{j},\gamma_{k}} \mod 2.
\end{equation}
\end{lemma}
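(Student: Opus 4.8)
The plan is to reduce the claim to a pair-by-pair comparison of the two counts and then invoke the characterization of the intersection pairing recalled at the end of Section~\ref{sec:Turning-Surfaces}. Since both sides of Equation~\eqref{eq:Intersection-Form} are sums over the same index set of pairs $j<k$, it suffices to prove the single-pair congruence $\abs{\Delta_{2}(\{\gamma_{j},\gamma_{k}\})} \equiv \ab{\gamma_{j},\gamma_{k}} \pmod 2$; summing over $j<k$ then yields the lemma.

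First I would use the general-position hypothesis together with the genericity convention adopted at the end of Section~\ref{sec:preliminaries-curves} to ensure that any two of the curves meet in finitely many points, each of which is a transverse crossing. In particular $\Delta_{2}(\{\gamma_{j},\gamma_{k}\})$ is a finite set of transverse double points, with no self-tangencies or higher-order coincidences. Next I would recall that, as described in Section~\ref{sec:Turning-Surfaces}, the intersection pairing $\ab{\gamma_{j},\gamma_{k}}$ is computed by summing local signs over exactly these crossing points, assigning $+1$ to each positive crossing and $-1$ to each negative crossing. Writing $N_{+}$ and $N_{-}$ for the number of positive and negative crossings between $\gamma_{j}$ and $\gamma_{k}$, this gives $\ab{\gamma_{j},\gamma_{k}} = N_{+} - N_{-}$, whereas the unsigned count is $\abs{\Delta_{2}(\{\gamma_{j},\gamma_{k}\})} = N_{+} + N_{-}$.

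The final step is the arithmetic observation that $N_{+} + N_{-} \equiv N_{+} - N_{-} \pmod 2$, since $2N_{-} \equiv 0$. This establishes the pairwise congruence, and summing over all $j<k$ completes the argument. I expect essentially no obstacle here: the entire content lies in the stated description of the intersection form, and passing to $\pmod 2$ erases the distinction between the signed and unsigned counts. The only point requiring genuine care is confirming that general position forbids tangential contacts, so that every point of $\Delta_{2}(\{\gamma_{j},\gamma_{k}\})$ carries a well-defined crossing sign and contributes exactly one term to each count; this is guaranteed by the genericity assumption on the curves.
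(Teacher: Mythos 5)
Your proposal is correct, and it is consistent with the paper, which states this lemma without proof and simply refers the reader to Chapter~3 of~\cite{FarkasKra1980}. Your pairwise reduction — using the paper's own description of $\ab{\gamma_{j},\gamma_{k}}$ as the signed count $N_{+}-N_{-}$ of transverse crossings, comparing it with the unsigned count $\abs{\Delta_{2}(\{\gamma_{j},\gamma_{k}\})}=N_{+}+N_{-}$, and noting these agree mod $2$ — is exactly the standard argument the citation stands in for.
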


Lemma~\ref{lem:Intersection-Form} and a calculation using the fact
that the intersection form is skew symmetric and bilinear yields
\begin{equation*}
  \sum_{j<k}\abs{\Delta_{2}(\{\gamma_{j},\gamma_{k}\})} =
  \sum_{m=1}^{g} \ob{A_{m}B_{m}  -
    \sum_{j}a_{m}(\gamma_{j})b_{m}(\gamma_{j})} \mod 2,
\end{equation*}
where $A_{m}\equiv \sum_{j}a_{m}(\gamma_{j})$, $B_{m}\equiv
\sum_{j}b_{m}(\gamma_{j})$. The self intersections of a
curve are not counted by the intersection form, but
Theorem~\ref{thm:General-Intersection-Theorem} showed that
\begin{equation*}
  \sum_{j}\abs{\Delta_{2}(\gamma_{j})} =
  \sum_{j} \ob{\tau(\gamma_{j})+1} + \sum_{m=1}^{g}\ob{A_{m}+B_{m} +
  \sum_{j}a_{m}(\gamma_{j})b_{m}(\gamma_{j})} \mod 2,
\end{equation*}
and hence
\begin{equation}
  \label{eq:Surface-Factorization-1}
  \prod_{j} (-1)^{\abs{\Delta_{2}(\gamma_{j})}} \prod_{j<k}
  (-1)^{\abs{\Delta_{2}(\{\gamma_{j},\gamma_{k}\})}} =
  (-1)^{\sum_{j}\ob{\tau(\gamma_{j})+1} + \sum_{m=1}^{g}\ob{A_{m}B_{m}+ A_{m}+B_{m}}}.
\end{equation}
Define, for $\alpha\in H_{1}(\surface,\Z_{2})$,
\begin{equation}
  \label{eq:Constant-c}
  (-1)^{c(\alpha)}=\prod_{k=1}^{g}(-1)^{1+\ab{\alpha,e^{k}_{1}}+\ab{\alpha,e^{k}_{2}}
    + \ab{\alpha,e^{k}_{1}}\ab{\alpha,e^{k}_{2}}}. 
\end{equation}

\begin{lemma}
  \label{lem:Surface-Factorization}
  Let $\{\gamma_{j}\}$ be a collection of generic closed curves.  Then
  \begin{equation}
    \label{eq:Surface-Factorization}
    \prod_{j} (-1)^{\abs{\Delta_{2}(\gamma_{j})}} \prod_{j<k}
    (-1)^{\abs{\Delta_{2}(\{\gamma_{j},\gamma_{k}\})}} = \frac{1}{2^{g}}
    \sum_{\alpha\in H_{1}(\surface,\Z_{2})} (-1)^{c(\alpha)}
    \prod_{j}(-1)^{\tau(\gamma_{j}) + \ab{\alpha,\gamma_{j}}+1},
  \end{equation}
\end{lemma}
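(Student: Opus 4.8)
The plan is to begin from Equation~\eqref{eq:Surface-Factorization-1}, which already evaluates the left-hand side of \eqref{eq:Surface-Factorization} as the single sign $(-1)^{\sum_{j}(\tau(\gamma_{j})+1)+\sum_{m=1}^{g}(A_{m}B_{m}+A_{m}+B_{m})}$, where $A_{m}=\sum_{j}a_{m}(\gamma_{j})$ and $B_{m}=\sum_{j}b_{m}(\gamma_{j})$. The lemma therefore reduces to showing that the right-hand side equals this same sign. Since $\tau(\gamma_{j})$ is independent of $\alpha$, I would first pull $\prod_{j}(-1)^{\tau(\gamma_{j})+1}$ out of the $\alpha$-sum, where it matches the turning-number part of the target verbatim. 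What remains is the purely homological identity $\frac{1}{2^{g}}\sum_{\alpha\in H_{1}(\surface,\Z_{2})}(-1)^{c(\alpha)}\prod_{j}(-1)^{\ab{\alpha,\gamma_{j}}}=(-1)^{\sum_{m}(A_{m}B_{m}+A_{m}+B_{m})}$.

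Using bilinearity of the intersection pairing, I would collapse $\prod_{j}(-1)^{\ab{\alpha,\gamma_{j}}}$ to $(-1)^{\ab{\alpha,\Gamma}}$, where $\Gamma=\sum_{j}\gamma_{j}\in H_{1}(\surface,\Z_{2})$ satisfies $\ab{\Gamma,e^{m}_{1}}=A_{m}$ and $\ab{\Gamma,e^{m}_{2}}=B_{m}$. Expanding $\alpha$ and $\Gamma$ in the symplectic basis and applying the mod-$2$ pairing relations (each $e^{k}_{1}$ pairs with $e^{k}_{2}$ to give $1$, all other basis pairings vanish) yields $\ab{\alpha,\Gamma}=\sum_{k}(\ab{\alpha,e^{k}_{1}}B_{k}+\ab{\alpha,e^{k}_{2}}A_{k})$. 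The structural point on which everything hinges is that both $c(\alpha)$ from \eqref{eq:Constant-c} and this expression for $\ab{\alpha,\Gamma}$ are sums over the genus index $k$ involving only the coordinates $\ab{\alpha,e^{k}_{1}},\ab{\alpha,e^{k}_{2}}$. Because the map $\alpha\mapsto(\ab{\alpha,e^{k}_{1}},\ab{\alpha,e^{k}_{2}})_{k}$ is an isomorphism $H_{1}(\surface,\Z_{2})\cong(\Z_{2})^{2g}$, these coordinates run independently over $(\Z_{2})^{2}$ for each $k$, and the entire $2g$-fold sum factors into a product of $g$ identical two-variable sums.

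The computation then reduces to a single elementary identity: writing $x=\ab{\alpha,e^{k}_{1}}$ and $y=\ab{\alpha,e^{k}_{2}}$, each factor is $\frac{1}{2}\sum_{x,y\in\Z_{2}}(-1)^{1+x+y+xy+xB_{k}+yA_{k}}=(-1)^{A_{k}B_{k}+A_{k}+B_{k}}$. I would verify this by enumerating the four values of $(x,y)$: the undivided sum equals $(1+(-1)^{A_{k}})(1+(-1)^{B_{k}})-2$, and checking the four cases $(A_{k},B_{k})\in(\Z_{2})^{2}$ shows this is always $2\,(-1)^{A_{k}B_{k}+A_{k}+B_{k}}$. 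Multiplying over $k$ gives exactly $(-1)^{\sum_{m}(A_{m}B_{m}+A_{m}+B_{m})}$, and reinstating the turning-number factor completes the proof. I do not expect a serious obstacle; the whole argument is a discrete Fourier inversion on $(\Z_{2})^{2g}$, with the weight $(-1)^{c(\alpha)}$ being precisely the one that reproduces the quadratic self-intersection form when averaged against the characters $(-1)^{\ab{\alpha,\Gamma}}$. The only point demanding care is the mod-$2$ symplectic bookkeeping, so that in the exponent the coefficient of $x$ is $B_{k}$ and that of $y$ is $A_{k}$ and not the reverse; getting this transposition wrong would still produce a true-looking but incorrect intermediate identity.
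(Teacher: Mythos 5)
Your proof is correct and is essentially the paper's own argument: both start from Equation~\eqref{eq:Surface-Factorization-1} and reduce the claim to the per-handle identity $(-1)^{AB+A+B}=\tfrac{1}{2}\bigl(-1+(-1)^{A}+(-1)^{B}+(-1)^{A+B}\bigr)$, which the paper applies by expanding the product over handles into the $\alpha$-sum while you run the same computation in reverse by factoring and evaluating the $\alpha$-sum as a product of $g$ two-variable character sums. Your mod-$2$ symplectic bookkeeping ($\ab{\alpha,e^{k}_{1}}$ pairing with $B_{k}$ and $\ab{\alpha,e^{k}_{2}}$ with $A_{k}$) is the correct transposition, so there is no gap.
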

\begin{proof}
  This follows from applying the identity $(-1)^{ab+a+b}=
  \frac{1}{2}(-1 + (-1)^{a} +(-1)^{b} +(-1)^{a+b})$ to the terms
  $A_{m}B_{m}+A_{m}+B_{m}$ in
  Equation~\eqref{eq:Surface-Factorization-1}, expanding
  $\prod_{m=1}^{g}\frac{1}{2}\ob{-1 + (-1)^{A_{m}} + (-1)^{B_{m}}+
    (-1)^{A_{m} + B_{m}}}$, and using the definition of $c(\alpha)$.
\end{proof}

\begin{definition}
  Recall that a loop is an equivalence class of closed edge simple
  walks. The \emph{turning weight} of a loop $C$ is defined to be
  $(-1)^{\tau(\gamma)}$ where $\gamma$ is a smoothed version of a walk
  in the class $C$.
\end{definition}

A smoothed version of a walk means that the corners where edges meet
are rounded off in a smooth fashion, and that any triple (quadruple,
etc.) intersections are perturbed to a collection of double points. As
self and mutual intersections of loops are defined pairwise, this
count of double points via smoothing agrees with the count of double
points for loops. Lemma~\ref{lem:Crossing-Intersecting} thus implies
Lemma~\ref{lem:Surface-Factorization} applies to collections of
edge-disjoint loops, i.e., to decompositions. Recalling
Lemma~\ref{lem:Decomposition-Weight}, this yields
\begin{corollary}
  \label{cor:Decomposition-Weight-Prime}
  The weight of a decomposition $\{C_{j}\}$ can be written
  \begin{equation}
  \label{eq:Decomposition-Weight1}
  w(\{C_{j}\}) = \frac{1}{2^{g}}
    \sum_{\alpha\in H_{1}(\surface,\Z_{2})} (-1)^{c(\alpha)}
    \prod_{j}\ob{(-1)^{\tau(C_{j}) +
      \ab{\alpha,C_{j}}+1}\prod_{xy\in E(C_{j})}K_{xy}}.
\end{equation}
\end{corollary}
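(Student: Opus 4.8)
The plan is to derive this corollary by feeding the combinatorial intersection numbers of Lemma~\ref{lem:Decomposition-Weight} into the geometric factorization of Lemma~\ref{lem:Surface-Factorization}. Starting from
\begin{equation*}
  w(\{C_{j}\}_{\cap_{e}}) = \prod_{j}(-1)^{\si{C_{j}}}\prod_{j<k}(-1)^{\mi{C_{j}}{C_{k}}}\prod_{xy\in\cup E(C_{j})}K_{xy},
\end{equation*}
the first task is to match the two sign-producing products against the left-hand side of Equation~\eqref{eq:Surface-Factorization}. The coupling product $\prod_{xy\in\cup E(C_{j})}K_{xy}$ is the easy factor: since a decomposition consists of pairwise edge-disjoint loops, the edge set is a disjoint union, so it factors with no overcounting as $\prod_{j}\prod_{xy\in E(C_{j})}K_{xy}$, matching the coupling factor on the right-hand side of the claimed identity.

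The heart of the argument is identifying $\si{C_{j}}=\mathrm{cr}(C_{j})$ and $\mi{C_{j}}{C_{k}}$ with the double-point counts $\abs{\Delta_{2}(\gamma_{j})}$ and $\abs{\Delta_{2}(\{\gamma_{j},\gamma_{k}\})}$ of smoothed representatives $\gamma_{j}$ of the loops. First I would fix, for each loop, a closed walk representing it and replace that walk by a smooth regular curve that rounds the corners and perturbs any higher-order (triple or greater) coincidences into transverse double points. The crossings counted by $\mathrm{cr}$ are defined pairwise at each vertex in terms of the cyclic order on half-edges, and Lemma~\ref{lem:Crossing-Intersecting} says that for a properly embedded graph two half-edges cross at a vertex exactly when the corresponding curve segments intersect. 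This sends each self-crossing of $C_{j}$ to a double point of $\gamma_{j}$ and each mutual crossing of $C_{j},C_{k}$ to a point of $\Delta_{2}(\{\gamma_{j},\gamma_{k}\})$, giving the congruences $\si{C_{j}}\equiv\abs{\Delta_{2}(\gamma_{j})}$ and $\mi{C_{j}}{C_{k}}\equiv\abs{\Delta_{2}(\{\gamma_{j},\gamma_{k}\})}$ modulo $2$.

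With these congruences in hand the proof is immediate: replace the two sign products by $\prod_{j}(-1)^{\abs{\Delta_{2}(\gamma_{j})}}\prod_{j<k}(-1)^{\abs{\Delta_{2}(\{\gamma_{j},\gamma_{k}\})}}$, apply Lemma~\ref{lem:Surface-Factorization} to rewrite this as $\tfrac{1}{2^{g}}\sum_{\alpha}(-1)^{c(\alpha)}\prod_{j}(-1)^{\tau(\gamma_{j})+\ab{\alpha,\gamma_{j}}+1}$, and reattach the factored coupling product. I expect the main obstacle to be purely bookkeeping in the smoothing step: confirming that rounding corners at shared vertices and perturbing multiple coincidences faithfully reproduces every pairwise crossing count modulo $2$ without creating or destroying crossings, and that the $\tau(\gamma_{j})$ computed from the smoothed curve agrees with the turning number $\tau(C_{j})$ of the loop defined in Equation~\eqref{eq:Turn-Def}. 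Both are handled by the ``smoothed version'' convention, but each deserves explicit verification before Lemma~\ref{lem:Surface-Factorization} may be legitimately applied to decompositions.
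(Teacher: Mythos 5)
Your proposal is correct and follows essentially the same route as the paper: starting from Lemma~\ref{lem:Decomposition-Weight}, identifying crossings of loops with double points of smoothed regular curves via Lemma~\ref{lem:Crossing-Intersecting} and the smoothing convention, and then applying Lemma~\ref{lem:Surface-Factorization}. Your explicit attention to the bookkeeping in the smoothing step (pairwise agreement of crossing counts mod $2$ and the consistency of the turning number) is exactly the content the paper compresses into the remark preceding the corollary.
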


\subsection{Totally Nonbacktracking Walks}
\label{sec:TNB-Walks}

The theory of heaps of pieces is used throughout this section. A good
introduction can be found in either
of~\cite{Krattenthaler2000,Viennot1986}. The basic definitions,
terminology, and theorem used in this paper are recalled in
Appendix~\ref{sec:Heaps} and should be consulted now by readers who
are not familiar with the theory.

\subsubsection{Heuristic Idea of the Heap-Walk Correspondence}
\label{sec:TNB-Walks-Heuristic}

This section present a bijection between a class $\tnbWalks$ of walks
and the set of pairs $(\omega^{\star},H)$, where $\omega^{\star}$ is a
walk that does not repeat any edge, and $H$ is a heap of loops. To
help orient the reader a heuristic description is presented first,
beginning with a description of the class $\tnbWalks$. The formal
definition of $\tnbWalks$ is Definition~\ref{def:Acceptable} below.

Trace a non-backtracking walk $\omega$ until the second time $t_{2}$
that a traversal of an oriented edge $xy$ is completed. Let $t_{1}$ be
the first time the oriented edge $xy$ is traversed. Remove the closed
subwalk from $t_{1}-1$ to $t_{2}-1$ from $\omega$. If either (i) the portion
of $\omega$ that remains contains a backtrack or (ii) the
removed subwalk repeats an edge then the walk $\omega$ is discarded.
Otherwise the same procedure is repeated to the portion of $\omega$
that remains after the removal of the closed subwalk. Walks $\omega$ for which
this iterated procedure results in a walk that does not traverse any edge twice
are called \emph{totally non-backtracking}, denoted
$\omega\in\tnbWalks$. See~\Cref{fig:Magnifying-Lens} for an example of
how a walk can fail to be totally non-backtracking.

\begin{figure}
  \centering
  \beginpgfgraphicnamed{Figure9S}
  \begin{tikzpicture}[scale=1.5]
    \draw[step=1cm,gray] (-2,-1) grid (1,2);
    \node[ivertex] (v00) at (-2,-1) {};
    \node[ivertex] (v10) at (-1,-1) {};
    \node[ivertex] (v20) at (0,-1) {};
    \node[ivertex] (v01) at (-2,0) {};
    \node[ivertex] (v11) at (-1,0) {};
    \node[ivertex] (v21) at (0,0) {};
    \node[ivertex] (v12) at (-1,1) {};
    \node[ivertex] (v22) at (0,1) {};
    \node[ivertex] (v32) at (1,1) {};
    \node[ivertex] (v13) at (-1,2) {};
    \node[ivertex] (v23) at (0,2) {};
    \node[ivertex] (v33) at (1,2) {};
    \node at (v00) [above left] {$\omega_{1}$};
    \node at (v10) [above left] {$\omega_{2}$};
    \node at (v11) [below left] {$\omega_{3}$};
    \node at (v01) [below left] {$\omega_{4}$};
    \node at (v00) [below] {$\omega_{5}$};
    \node at (v10) [below] {$\omega_{6}$};
    \node at (v20) [below left] {$\omega_{7}$};
    \node at (v21) [below left] {$\omega_{8}$};
    \node at (v22) [below left] {$\omega_{9}$};
    \node at (v32) [below right] {$\omega_{14}$};
    \node at (v33) [above right] {$\omega_{13}$};
    \node at (v23) [above] {$\omega_{12}$};
    \node at (v13) [above left] {$\omega_{11}$};
    \node at (v12) [below left] {$\omega_{10}$};
    \node at (v22) [below right] {$\omega_{15}$};
    \node at (v21) [below right] {$\omega_{16}$};
    \node at (v20) [below right] {$\omega_{17}$};
    \draw[black, very thick] [rounded corners] (v00) -- (v10) -- (v11) -- (v01) --
    (v00) -- (v10) -- (v20) -- (v21) -- (v22) -- (v32) -- (v33) --
    (v23) -- (v13) -- (v12) -- (v22) -- (v21) -- (v20); 
  \end{tikzpicture}
  \endpgfgraphicnamed
  \caption{The walk $\omega$ illustrated is a closed non-backtracking
    walk, but it is not totally non-backtracking. After removing the
    first loop the remaining walk is not acceptable, as some edges are
    traversed twice.}
  \label{fig:Magnifying-Lens}
\end{figure}

The procedure to identify a totally non-backtracking walk always
removes a closed subwalk from a non-backtracking walk. These closed
subwalks $\{C_{j}\}$ inductively form a partially ordered set (poset)
by defining a closed subwalk $C_{1}$ to be greater than $C_{2}$ if
$C_{1}$ was removed after $C_{2}$, and $C_{1}$ and $C_{2}$ share an
edge. Note that the maximal elements of this poset always share an
edge with the remaining portion of $\omega$. It follows that a totally
non-backtracking walk $\omega$ can be transformed into a poset of
closed subwalks along with a walk $\omega^{\star}$ that does not
traverse any edge twice, and that each maximal closed subwalk shares
an edge with $\omega^{\star}$. If we forget the orientation of each
$C_{i}$, i.e., only remember the equivalence class (loop) to which
$C_{i}$ belongs, this partially ordered structure is a heap of
loops.

Let $(\omega^{\star},H)$ be a pair, $\omega^{\star}$ a walk that does
not traverse any edge twice, and $H$ a heap of loops whose maximal
elements contain an edge in $\omega^{\star}$. To invert the previously
described procedure first trace the walk $\omega^{\star}$. This
specifies an orientation of the loops that are maximal in $H$
by orienting the first edge of the loop that is contained in
$\omega^{\star}$ to have the same orientation as in
$\omega^{\star}$. In this way each maximal loop in the heap can be
identified with a closed walk. Tracing $\omega^{\star}$ also
gives an order on the maximal loops of the heap $H$, since the loop
labelling a maximal piece shares an edge with $\omega^{\star}$. The
order is the reverse of the order in which the shared edges are
encountered. The first loop in this order can be inserted into
$\omega^{\star}$ by inserting the corresponding closed subwalk
at the first shared edge. This defines a new walk $\omega_{1}$, and
$\omega_{1}$ induces a new order on the remaining maximal loops of
$H$. Repeatedly applying this procedure recovers the original
path $\omega$. The next section makes the preceding discussion
precise.

\subsubsection{Bijection between Totally Nonbacktracking Walks and
  Heaps of Loops}
\label{sec:TNB-Walks-Bijection}

In this section it will be important to distinguish between the edges
$\{x,y\}$ of $G$ and oriented edges $(x,y)$. To lighten the notation
we will write $xy$ for an unoriented edge and $\orient{xy}$ for an
oriented edge. The word ``edge'' without any modifier will always
refer to an unoriented edge. A walk $\omega$ is \emph{edge closed} if
the initial and terminal oriented edges coincide, i.e.,
$\orient{\omega_{1}\omega_{2}} = \orient{\omega_{n-1}\omega_{n}}$. The
next definition extends the notion of being edge-simple to edge closed walks.

\begin{definition}
  \label{def:Edge-Simple}
  An edge closed walk is edge-simple if
  $\omega_{i}\omega_{i+1}=\omega_{j}\omega_{j+1}$ implies either
  $i=j$, or $i=1$, $j=n-1$, or vice versa.
\end{definition}

If $\omega$ is an $n$ step edge closed walk, the \emph{shift} of
$\omega$ is the walk $(\omega_{2}, \dots, \omega_{n},
\omega_{3})$. The \emph{reversal} of $\omega$ is $(\omega_{n-1},
\omega_{n-2}, \dots, \omega_{1},
\omega_{n-2})$. See~\Cref{fig:Shift-Reverse}.  The definitions just
given imply the next lemma.

\begin{figure}
  \centering
  \beginpgfgraphicnamed{Figure10}
  \begin{tikzpicture}
    \draw[dashed] (0,0) circle(2);
    \path (315:2) coordinate (a);
    \path (345:2) coordinate (b);
    \path (375:2) coordinate (c);
    \path (405:2) coordinate (d);
    \node[circle,fill=black] at (a) {};
    \node[circle,fill=black] at (b) {};
    \node[circle,fill=black] at (c) {};
    \node[circle,fill=black] at (d) {};
    \node[anchor=west] at (a) {$\,\,\,\omega_{n-2}$};
    \node[anchor=west] at (b) {$\,\,\,\omega_{1}=\omega_{n-1}$};
    \node[anchor=west] at (c) {$\,\,\,\omega_{2}=\omega_{n}$};
    \node[anchor=west] at (d) {$\,\,\,\omega_{3}$};
  \end{tikzpicture}
  \endpgfgraphicnamed
  \caption{A schematic depiction of an $n$-step edge closed walk $\omega$.}
  \label{fig:Shift-Reverse}
\end{figure}
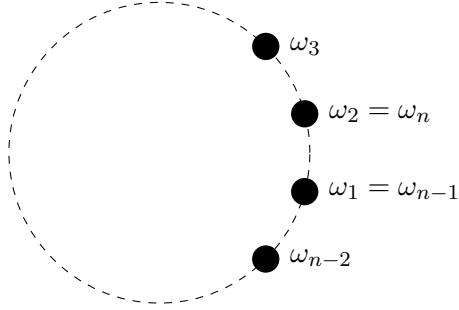

\begin{lemma}
  \label{lem:Shifting-Reversing}
  If $\omega$ is an edge closed non-backtracking walk, the shift and
  reversal of $\omega$ are also edge closed and non-backtracking. If
  $\omega$ is edge-simple, so are the shift and reversal of $\omega$.
\end{lemma}

An \emph{oriented loop} is a edge-simple edge closed walk.
An equivalence class (under shifts and reversals) of oriented loops is
called a \emph{EC-loop}. It is a simple matter to relate EC-loops to
loops.

\begin{lemma}
  \label{lem:Loop-EC-Loop}
  There is a bijection between the set of EC-loops and the set
  of loops.
\end{lemma}
\begin{proof}
  A closed edge-simple walk $\omega$ is a sequence $(\omega_{1}, \dots,
  \omega_{n})$ of adjacent vertices that begins and ends at the same
  vertex such that no unoriented edge is used twice. The oriented loop
  corresponding to $\omega$ is the sequence $(\omega_{1}\omega_{2}, 
  \dots, \omega_{n-1}\omega_{n},\omega_{n}\omega_{2})$ of edges
  traversed, with the first edge repeated. This is a bijection, and
  yields a bijection between loops (equivalence classes of closed
  edge-simple walks) and EC-loops (equivalence classes of oriented loops).
\end{proof}

A \emph{heap of loops} is a heap of pieces whose piece types are
loops, with two loops being concurrent if they have an edge in
common. Deferring the precise definition of a totally non-backtracking
walk until Definition~\ref{def:Acceptable} below, the formal statement
of the heuristic discussion in Section~\ref{sec:TNB-Walks-Heuristic}
is

\begin{theorem}
  \label{thm:Walks-Bijection}
  There is a bijection from the set of totally non-backtracking walks
  to the set of pairs $(\omega^{\star},H)$, where $\omega^{\star}$ is
  an edge-simple walk and $H$ is a heap of loops whose maximal
  elements' labels contain an edge belonging to $\omega^{\star}$.
\end{theorem}

A \emph{heap of EC-loops} is a heap of pieces whose piece types are
EC-loops, with two EC-loops being concurrent if they have an edge in
common. The correspondence between loops and EC-loops given by
Lemma~\ref{lem:Loop-EC-Loop} means that it suffices
to prove Theorem~\ref{thm:Walks-Bijection} when $H$ is a heap of
EC-loops. The use of EC-loops is for notational convenience.

\begin{definition}
  \label{def:Return-Time}
  The \emph{first return time $\tau_{\omega}$} of a non-backtracking
  walk $\omega$ is defined by
  \begin{equation}
    \label{eq:Return-Time} 
    \tau_{\omega} = \min \set{ k \mid \textrm{there exists a $j<k$ such that
      $\orient{\omega_{j}\omega_{j+1}} =
      \orient{\omega_{k}\omega_{k+1}}$}}.
  \end{equation}
  By convention $\min\emptyset=\infty$. If $\tau_{\omega}<\infty$
  let $\tau^{\star}_{\omega}$ denote the unique $j<\tau_{\omega}$ such
  that $\orient{\omega_{j}\omega_{j+1}}=\orient{\omega_{\tau_{\omega}}
    \omega_{\tau_{\omega}+1}}$.
\end{definition}

For $1\leq j \leq k\leq n$ define $\omega^{k}_{j}=(\omega_{j}, \dots,
\omega_{k})$. The definition of the first return time implies that the
first edge-closed subwalk of a walk $\omega$ is
$\omega_{\tau_{\omega}^{\star}}^{\tau_{\omega}+1}$, provided such a
subwalk exists. Denote this edge closed subwalk by $L(\omega)$. If
$\omega$ is a walk of length $n$ define $\omega\setminus\omega^{k}_{j}
= (\omega_{1}, \dots, \omega_{j-1}, \omega_{k+1}, \dots, \omega_{n})$.

\begin{definition}
  \label{def:LE}
  The \emph{(single) loop erasure} of a non-backtracking walk $\omega$
  is the walk $\LE(\omega) = \omega \setminus
  \omega^{\tau_{\omega}}_{\tau_{\omega}^{\star}+1}$.
\end{definition}

\begin{definition}
  \label{def:Acceptable}
  A non-backtracking walk $\omega$ of length $n$ is \emph{acceptable}
  if $(\omega_{1}, \dots, \omega_{\tau_{\omega}\wedge n})$ is an
  edge-simple walk. A non-backtracking walk is \emph{totally
    non-backtracking} if $\LE^{k}(\omega)$ is acceptable for all $k$.
\end{definition}

In other words, a walk $\omega$ is acceptable if the edge closed walk
$L(\omega)$ removed by loop erasure is an oriented loop, and
this oriented loop does not share any \emph{unoriented} edge with the
portion of $\omega$ that occurs prior to $L(\omega)$. Define $\bar
L(\omega)$ to be the EC-loop (equivalence class of oriented loops) to
which $L(\omega)$ belongs.

The following algorithm is one half of the bijection between 
non-backtracking edge closed walks and heaps of EC-loops.

\begin{definition}[Loop erasure algorithm]
  \label{def:LErase}
  Let $\omega$ be a totally non-backtracking walk. The
  \emph{loop erasure} of $\omega$ is the output $(\omega^{\star},H_{\omega})$ of
  the following algorithm.
  \begin{enumerate}
    \item Initialize $(\gamma^{0},H^{0}) = (\omega,\emptyset)$, where
      $\emptyset$ is the empty heap of EC-loops.
    \item
      \begin{enumerate}
      \item If $\gamma^{i-1}$ contains an edge-closed subwalk, set
      $\gamma^{i}=\LE(\gamma^{i-1})$. Set $H^{i}$
      to be the heap of EC-loops $H^{i-1}\circ \bar L(\gamma^{i-1})$.
      \item If $\gamma^{i-1}$ is edge-simple, output $(\omega^{\star},H_{\omega})
        = (\gamma^{i-1},H^{i-1})$.
      \end{enumerate}
    \item Go to 2.
  \end{enumerate}
\end{definition}

The loop erasure of a totally non-backtracking walk is always
acceptable. It follows that the loop erasure algorithm is
well-defined as any walk has finite length and EC-loops have positive
length, so the algorithm terminates after a finite number of steps.

\begin{lemma}
  \label{lem:Erasure-Pyramid}
  Let $\omega$ be a totally non-backtracking walk. The
  pair $(\omega^{\star},H_{\omega})$ output by the loop erasure
  algorithm applied to $\omega$ consists of an edge-simple walk
  $\omega^{\star}$ and a heap of EC-loops whose maximal
  elements' labels contain an edge in $\omega^{\star}$.
\end{lemma}
\begin{proof}
  The walk $\omega^{\star}$ is edge-simple: it cannot contain an edge
  closed loop by the definition of the algorithm, so
  $\tau_{\omega^{\star}}=\infty$. A consequence of the definition of
  an acceptable walk is that $\tau_{\omega^{\star}}=\infty$; this implies
  that $\omega^{\star}$ itself is edge-simple. As each EC-loop removed
  by the algorithm shares an edge with the remaining walk the labels
  of the maximal elements of $H_{\omega}$ contain edges of
  $\omega^{\star}$.
\end{proof}

The inverse of the loop erasure algorithm is given by another
algorithm, the \emph{loop addition algorithm}. The definition requires
a description of how EC-loops can be inserted into a walk.

\begin{definition}
  Let $\omega=(\omega_{1},\dots,\omega_{n})$ be a non-backtracking
  walk, and let $\eta$ be an EC-loop sharing an edge with
  $\omega$. Let $\orient{\omega_{s}\omega_{s+1}}$ be the first
  oriented edge in $\omega$ such that $\omega_{s}\omega_{s+1}$ occurs
  in $\eta$, and let $\tilde\eta = (\eta_{1},\dots, \eta_{m})$ be the
  edge closed walk obtained from $\eta$ by tracing the oriented loop
  given by~\Cref{lem:Loop-EC-Loop} that begins with the oriented edge
  $\orient{\omega_{s}\omega_{s+1}}$. The \emph{loop insertion
    $\omega\loopadd\eta$ of $\eta$ into $\omega$} is defined by
  \begin{equation*}
    \omega\loopadd\eta = (\omega_{1},\dots, \omega_{s}, \eta_{2},
    \dots, \eta_{m-1}, \omega_{s+1},\dots, \omega_{n}).
  \end{equation*}
\end{definition}

Loop erasure creates a heap of EC-loops; defining an inverse algorithm
requires knowing the order in which EC-loops should be inserted back
into a walk. Oriented loops are erased chronologically, so after
removing several oriented loops from a walk the oriented loop which is
\emph{furthest} from the beginning of the remaining walk should be
added first. More precisely,

\begin{definition}
  Let $\omega$ be a non-backtracking walk, and $\cc C = \{C_{1},\dots,
  C_{k}\}$ be a collection of EC-loops. The \emph{walk order induced by
    $\omega$} on $\cc C$ is given by $C_{1}<C_{2}$ if the first
  occurrence of an edge in $C_{1}$ in $\omega$ is earlier than the
  first occurrence of an edge in $C_{2}$ in $\omega$.
\end{definition}

\begin{lemma}
  \label{lem:Strict-Total-Order}
  Let $\omega$ be a non-backtracking walk, and let $H$ be a heap of
  EC-loops. Assume the label of each maximal piece in $H$ contains an edge
  in $\omega$. The walk order on the collection $\{\ell(x) \mid \textrm{$x$
    is a maximal piece in $H$}\}$ is a strict total order.
\end{lemma}
\begin{proof}
  Two pieces that are maximal in the heap order cannot have labels
  that share an edge.
\end{proof}

Define the maximal element of a heap with respect to the walk order to
be the maximal piece $x\in H$ such that $\ell(x)$ is maximal in the
walk order. This is well-defined
by~\Cref{lem:Strict-Total-Order}. When it is necessary to indicate the
particular walk $\eta$ that is being used to order elements in a heap this
will be noted by a subscript, e.g., $\max_{\eta}$.

\begin{definition}
  \label{def:Legal-Pair} A \emph{legal pair} is a pair
  $(\omega^{\star},H)$ where $\omega^{\star}$ is an edge-simple walk and
  $H$ is a heap of EC-loops whose maximal elements' labels contain an
  edge in $\omega^{\star}$.
\end{definition}

\begin{definition}[Loop addition algorithm]
  \label{def:LAdd}
  Let $(\omega^{\star},H)$ be a legal pair. The \emph{loop addition}
  of $(\omega^{\star},H)$ is the walk $\omega$ output by the following
  algorithm:
  \begin{enumerate}
  \item Set $\omega^{0}=\omega^{\star}$, $H^{0}=H$,
  \item
    \begin{enumerate}
    \item If $H^{i-1}\neq\emptyset$, set $\omega^{i}=\omega^{i-1}\loopadd
      \ell( \max_{\omega^{i-1}} H^{i-1})$, where $\ell$ is the label
      function of the heap $H^{i-1}$, and set $H^{i}=H^{i-1}\setminus
      \mathrm{argmax}_{\omega^{i-1}}H^{i-1}$.
    \item if $H^{i-1}=\emptyset$, output $\omega=\omega^{i-1}$.
    \end{enumerate}
  \item Go to 2.
  \end{enumerate}
\end{definition}

By construction the labels of the maximal pieces of $H^{i}$ share an
edge with $\omega^{i}$, so the loop addition algorithm produces a walk
and is well-defined. To prove that the output of the loop addition
algorithm is a totally non-backtracking walk requires a lemma.

\begin{lemma}
  \label{lem:Insert-Remove}
  Let $\omega$ be the output of the loop addition algorithm
  applied to a legal pair $(\omega^{\star},H)$. Assume $\abs{H}=k$, so
  $\omega=\omega^{k}$. Then $\LE(\omega)=\omega^{k-1}$.
\end{lemma}
\begin{proof}
  The proof is by induction on $\abs{H}$. Let $C$ be the EC-loop
  inserted at the final step of the loop addition algorithm. Then $C$
  is the unique maximal EC-loop in $H^{k-1}$. There are two cases
  to consider, depending on if $H^{k-2}$ is a trivial heap or a
  pyramid heap.
  \begin{enumerate}
  \item Suppose $H^{k-2}$ is a trivial heap, i.e., $C$ is the label of
    a maximal element in $H^{k-2}$. In this case $C$ is edge disjoint
    with the EC-loop $C^{\prime}$ added at step $k-1$, and hence
    occurs later in the walk order. This implies $C$ closes prior to
    $C^{\prime}$ in $\omega$, and by induction, that $C$ is the first
    edge closed walk to close in $\omega$.
  \item If $H^{k-2}$ is not a trivial heap then $C$ was not the label
    of a maximal element in $H^{k-2}$. In this case $C$ is inserted
    into the subwalk $C^{\prime}$ of $\omega^{k-1}$, where
    $C^{\prime}$ was the label of the unique maximal element of
    $H^{k-2}$. This implies $C$ closes prior to $C^{\prime}$, and by
    induction $C$ is the first edge closed subwalk to close.\qedhere
  \end{enumerate}
\end{proof}

\begin{lemma}
  \label{lem:Addition-Properties}
  The loop addition algorithm applied to a legal pair
  $(\omega^{\star},H)$ produces a totally non-backtracking walk
  $\omega$ that begins with the edge
  $\orient{\omega^{\star}_{1}\omega^{\star}_{2}}$.
\end{lemma}
\begin{proof}
  Assume by induction that the lemma holds for $\abs{H}=k-1$ for some
  $k\geq 1$. Consider a legal pair with $\abs{H}=k$. By induction the
  walk $\omega^{k-1}$ produced by the first $k-1$ steps of the loop
  addition algorithm has first step
  $\orient{\omega^{\star}_{1}\omega^{\star}_{2}}$. Loop insertion
  never alters the first step of a walk, so
  $\orient{\omega^{\star}_{1}\omega^{\star}_{2}}$ is the first step of
  $\omega=\omega^{k}$. Furthermore, as the last loop is glued in at the
  \emph{first} shared edge it follows that $(\omega^{k}_{1}, \dots,
  \omega^{k}_{\tau_{\omega^{k}}})$ is an edge-simple walk, so
  $\omega^{k}$ is acceptable. \Cref{lem:Insert-Remove} implies
  $\LE(\omega^{k}) = \omega^{k-1}$, and by induction it follows that
  $\omega$ is totally non-backtracking.
\end{proof}

\begin{proof}[Proof of~\Cref{thm:Walks-Bijection}]
  The loop erasure (resp.\ loop addition) algorithm has already
  established that every such walk (resp.\ pair) is mapped to such a
  pair (resp.\ walk). To prove the theorem it suffices to show that
  the loop insertion and loop removal operations add and remove the
  same loops when applied in succession to one another, as this shows
  that the heap built by loop erasure is the same as the heap
  dismantled by loop addition.

  Let $\omega$ be the output of the loop addition algorithm applied to
  a legal pair $(\omega^{\star},H)$. \Cref{lem:Insert-Remove}
  establishes that loop erasure applied to $\omega$ results in
  removing the last EC-loop added by loop erasure. As the walk that
  remains after removing one EC-loop is also the result of loop
  addition, it follows that successive applications of $\LE$
  remove EC-loops from $\omega$ in the inverse of the order they were added.

  Let $\omega$ be a totally non-backtracking walk. The definition of
  an acceptable walk implies the first edge $L(\omega)$ shares with
  $\LE(\omega)$ is $\LE(\omega)_{\tau_{\omega}^{\star}}
  \LE(\omega)_{\tau_{\omega}^{\star}+1}$; the definition of loop
  insertion thus implies that $\LE(\omega)\loopadd L(\omega) =
  \omega$. As the walk that remains after removing a single EC-loop is
  still a totally non-backtracking walk, it follows that the loop
  addition algorithm adds EC-loops in the inverse of the order in
  which they were removed.
\end{proof}

\begin{corollary}
  \label{cor:Walks-Pyramids}
  The set of tail free totally non-backtracking walks with initial
  edge $\orient{xy}$ are in bijective correspondence with the set of
  legal pairs $(\orient{xy},P)$, where $P$ is a pyramid of loops whose
  maximal element's label contains $xy$.
\end{corollary}
\begin{proof}
  Any tail free walk extends to an edge closed walk by repeating the
  first edge of the walk. If an edge closed walk is totally
  non-backtracking all that remains after loop erasure is a single
  edge, which by~\Cref{lem:Addition-Properties} must by
  $\orient{xy}$. Conversely, any such legal pair corresponds to an
  edge closed walk, which corresponds to a tail free walk by removing
  the last edge.
\end{proof}

\subsubsection{An Involution on Non-Backtracking Walks}
\label{sec:Involution}

\Cref{sec:TNB-Walks-Bijection} established a bijection between heaps
of EC-loops and totally non-backtracking walks. This section
constructs an involution on walks that are \emph{not} totally
non-backtracking. The involution will be used
in~\Cref{sec:proof-main-theorem} to show that sums over totally
non-backtracking walks can be expanded to sums over non-backtracking
walks without altering the value of the sum.

If $\omega$ is not acceptable then the subwalk $(\omega_{1}, \dots,
\omega_{\tau_{\omega}})$ is not edge-simple. By construction no
oriented edge is used twice in $(\omega_{1}, \dots,
\omega_{\tau_{\omega}})$. Let $xy$ be the first edge used with both
orientations, and let $i$ and $j$ be the indices of $\omega$ such that
$\orient{\omega_{i}\omega_{i+1}} = \orient{xy}$ and
$\orient{\omega_{j-1}\omega_{j}} = \orient{yx}$. Suppose $\omega$ has
length $n$. Define $\iota$ by $\iota(\omega) = (\omega_{1}, \dots,
\omega_{i+1}, \omega_{j-2}, \dots, \omega_{i+2}, \omega_{j-1},
\omega_{j}, \dots, \omega_{n})$.

\begin{lemma}
  \label{lem:Non-Acceptable}
  $\iota$ is an involution on walks that are not acceptable.
\end{lemma}
\begin{proof}
  It suffices to show that $\iota(\omega)$ is not acceptable if
  $\omega$ is not acceptable. Let $j$ be the index of $\omega$ such
  that $\omega_{j-1}\omega_{j}$ is the first time an edge has been used with
  both orientations. The subwalk $(\omega_{1}, \dots, \omega_{j-1})$ is
  thus edge-simple, and hence $(\iota(\omega)_{1}, \dots,
  \iota(\omega)_{j-1})$ is edge-simple. Since $(\iota(\omega)_{1}, \dots,
  \iota(\omega)_{j})$ is not edge-simple, it follows that $\iota(\omega)$
  is not acceptable.
\end{proof}

A natural scheme to extend $\iota$ to an involution $\hat\iota$ on the
set of walks that are not totally non-backtracking is to perform loop
erasure until the walk is not acceptable, apply $\iota$, then perform
loop addition to put the removed loops back in. This is morally
correct, but since loop addition has only been defined when beginning
with an edge-simple walk, the scheme is not well defined. The precise
construction of $\hat\iota$ follows.

\begin{definition}
  \label{def:K-Acceptable}
  A non-backtracking walk $\omega$ is \emph{$k$-acceptable} if
  $\LE^{k-1}(\omega)$ is acceptable, but $\LE^{k}(\omega)$ is not
  acceptable.
\end{definition}

\begin{lemma}
  \label{lem:Non-Acceptable-Bijection}
  The set of $k$-acceptable non-backtracking walks is in bijection
  with pairs $(\omega^{\star},H)$ where
  \begin{enumerate}
  \item $\omega^{\star}$ is not an acceptable walk.
  \item Suppose $(\omega_{1}^{\star}, \dots, \omega_{m}^{\star})$ is
    the largest edge-simple subwalk of $\omega^{\star}$ beginning at
    $\omega_{1}^{\star}$. Then $H$ is a heap of $k$ loops whose
    maximal elements' labels contain an edge in $(\omega_{1}^{\star},
    \dots, \omega^{\star}_{m})$.
  \end{enumerate}
\end{lemma}
\begin{proof}
  Loop erasure applied $k$ times to a $k$-acceptable walk evidently
  produces such a pair. To recreate the $k$-acceptable walk apply loop
  addition to $((\omega^{\star}_{1}, \dots, \omega^{\star}_{m}),H)$,
  and then append $(\omega^{\star}_{m+1}, \dots, \omega^{\star}_{n})$
  to the resulting walk. As loop erasure and addition do not alter a
  walk after the occurrence of the $k$ loops that are inserted or
  removed, this is a bijection.
\end{proof}

If $\gamma$ is a $k$-acceptable walk, let $(\omega^{\star},H)$ be the
pair corresponding to $\gamma$ as given
by~\Cref{lem:Non-Acceptable-Bijection}.  Define $\hat\iota(\gamma)$ to
be the walk corresponding to $(\iota(\omega^{\star}),H)$. By
construction this walk is $k$-acceptable, as removing $k$ loops
results in the walk $\iota(\omega^{\star})$, which is not acceptable
as $\iota$ is an involution on walks that are not acceptable. Finally,
$\hat\iota$ can be extended to an involution on all non-backtracking
walks by defining it to be the identity map on the set of totally
non-backtracking walks.

\subsection{Main Theorem and Corollaries for the Ising Model}
\label{sec:Applications}

\subsubsection{Proof of Main Theorem}
\label{sec:proof-main-theorem}

To prove the main theorem requires a lemma showing that the weight on
loops is compatible with loop erasure and loop addition, and hence
with the bijection between pyramids of loops and totally
non-backtracking walks. Define weights $w_{\alpha} \colon \tfWalks\to
\C$ for $\alpha\in H_{1}(\surface, \Z_{2})$ by
\begin{equation}
  \label{eq:Weights}
  w_{\alpha}(\gamma) \equiv (-1)^{\tau(\gamma)+\ab{\gamma,\alpha}}
  \prod_{xy\in \gamma} K_{xy}, 
\end{equation}
where $\tau(\gamma)$ is the turning number of $\gamma$ on $\surface$,
and $xy\in\gamma$ is shorthand for the edges contained in the walk
$\gamma$.

\begin{lemma}
  \label{lem:TNB-Weight}
  Let $\gamma_{1}$ be a tail free non-backtracking walk, and let
  $\gamma_{2}$ be a loop sharing an edge with $\gamma_{1}$. Let
  $\gamma = \gamma_{1}\loopadd\gamma_{2}$. Then
  \begin{equation}
    w_{\alpha}(\gamma) = w_{\alpha} (\gamma_{1}) w_{\alpha}(\gamma_{2}).
  \end{equation}
\end{lemma}
\begin{proof}
  Consider $\gamma_{1}$, $\gamma_{2}$ as curves, and perturb
  $\gamma_{2}$ slightly so that it is in general position with respect
  to $\gamma_{1}$. The loop addition operation is the inverse of the surgery
  operation defined in Section~\ref{sec:Torus-Intersections}, so the
  claim follows from Lemma~\ref{lem:Surgery-Points}.
\end{proof}

The last step before proving the main theorem is to establish that the involution
$\hat\iota$ of~\Cref{sec:Involution} is odd with respect to the
weights $w_{\alpha}$ on walks that are not totally non-backtracking.

\begin{lemma}
  \label{lem:Not-TNB}
  Let $\alpha\in H_{1}(\surface,\Z_{2})$. The sum of
  $w_{\alpha}(\gamma)$ over all tail free non-backtracking walks
  that are not totally non-backtracking is zero. That is,
  \begin{equation}
    \sum_{\gamma\in\tfWalks\setminus\tnbWalks}w_{\alpha}(\gamma) = 0.
  \end{equation}
\end{lemma}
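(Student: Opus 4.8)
The plan is to prove the identity by a \emph{sign-reversing involution}. I would construct a bijection $\iota\colon\nbWalks\setminus\tnbWalks\to\nbWalks\setminus\tnbWalks$ with $\iota\circ\iota=\mathrm{id}$ and $w_\alpha(\iota(\gamma))=-w_\alpha(\gamma)$; pairing the contributions of $\gamma$ and $\iota(\gamma)$ then cancels the sum termwise. Since the couplings $K_{xy}$ may be treated as independent indeterminates, it is enough to build $\iota$ so that it (i) preserves the multiset of edges traversed by $\gamma$, hence fixes the monomial $\prod_{xy\in\gamma}K_{xy}$, and (ii) flips the prefactor $(-1)^{\tau(\gamma)+\ab{\gamma,\alpha}}$.

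To locate $\iota$ I would run the erasure map. Because $\gamma\notin\tnbWalks$, there is a first index $k$ at which $\eras$ fails on $\eras^{k}(\gamma,\emptyset)=(\rho,H)$: the segment $(\rho_{L_b(\rho)},\ldots,\rho_{L_e(\rho)-1})$ picked out by the loop-index maps is closed but does not lie in $\CSWalks$. As recorded after the definition of $\eras$, this failure occurs precisely because the relevant repeated edge $f$ is traversed twice in opposite directions; consequently $\rho$, and hence $\gamma$, contains a subwalk of the shape ``out along $f$, a closed excursion $B$ based at the far endpoint of $f$, back along $f$.'' I would let $\iota$ reverse the orientation of this excursion $B$ (equivalently, replace the identified subloop by its reversal and re-glue). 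This clearly preserves the edge multiset, and because reversing a closed subwalk alters the homology class of $\gamma$ by $2[B]=0$ in $H_{1}(\surface,\Z_2)$, it leaves $\ab{\gamma,\alpha}$ unchanged. The insensitivity of loop erasure to rerooting and reversal established in Lemma~\ref{lem:Reroot} is what would let me argue that the failing configuration, and therefore $\iota$, is canonically defined and that $\iota(\gamma)$ again lies in $\nbWalks\setminus\tnbWalks$.

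\textbf{Main obstacle: the sign.} With the edge multiset fixed and the homology contribution $\iota$-invariant, Lemma~\ref{lem:Crossing-Intersecting} together with Theorem~\ref{thm:General-Intersection-Theorem} reduces the required sign flip to showing that reversing $B$ changes the parity of the number of transverse self-intersections of a smoothing of $\gamma$ by exactly one. Heuristically this single intersection is the crossing of the two anti-parallel strands running along the doubled edge $f$: reversing $B$ interchanges the two edges of $B$ incident to the far endpoint of $f$, switching the side from which one strand departs and thereby creating or destroying one strand crossing. Making this ``exactly one, mod $2$'' statement precise and uniform over all local configurations is the delicate point, and I would carry it out using the additivity of the turning number under surgery (Lemma~\ref{lem:Surgery-Points}) in tandem with the gluing identity $w_\alpha(\gamma)=w_\alpha(\gamma_{1})w_\alpha(\gamma_{2})$ of Lemma~\ref{lem:TNB-Weight}, comparing the two ways of gluing the excursion onto $f$ and checking that they differ by the factor $-1$ carried by a single backtracking bigon. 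Finally, fixed-point-freeness of $\iota$ is automatic once the sign reversal is established, since $w_\alpha(\gamma)=-w_\alpha(\gamma)$ would force $w_\alpha(\gamma)=0$, which cannot happen for a nonzero monomial in the $K_{xy}$.
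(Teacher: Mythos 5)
Your proposal is correct and follows essentially the same route as the paper: the paper's proof also defines the sign-reversing involution $\Upsilon=\eras^{-k}\circ R\circ\eras^{k}$, where $R$ reverses the first non-simple closed subwalk detected at the point where loop erasure fails, verifies the sign flip by the same local two-strand picture at the doubly-traversed edge, and propagates it through the gluing identity of Lemma~\ref{lem:TNB-Weight}. The one step you flag as delicate (the single crossing created or destroyed at the doubled edge) is handled in the paper at the same level of detail, by a local calculation illustrated in a figure.
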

\begin{proof}
  As the weight $w_{\alpha}$ is compatible with loop erasure and
  addition by~\Cref{lem:TNB-Weight}, it suffices to show that
  $w_{\alpha}(\gamma)=-w_{\alpha}(\iota(\gamma))$ when $\gamma$ is not
  an acceptable walk. By performing surgery, using the additivity of
  the turning number, and the orientation independence (mod 2) of the
  turning number, this follows from a local calculation in the plane,
  see Figure~\ref{fig:Not-TNB-Cancellation}.
\end{proof}

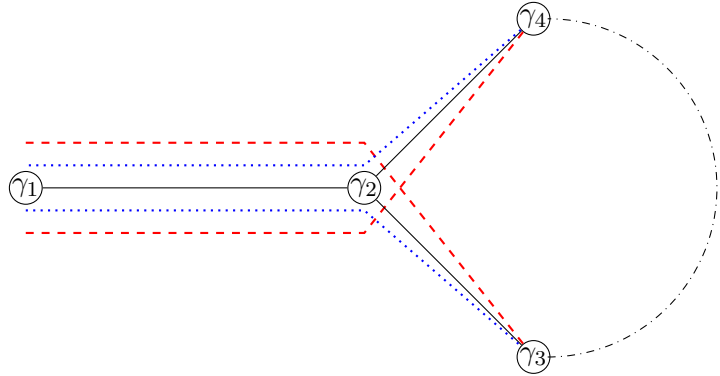
\begin{figure}[h]
  \centering
  \beginpgfgraphicnamed{Figure11}
  \begin{tikzpicture}[scale=1.5]
    \node[ivertex]  (v1) at (0,0) {$\gamma_{1}$};
    \node[ivertex]  (v2) at (3,0) {$\gamma_{2}$};
    \node[ivertex]  (v3) at (4.5,-1.5) {$\gamma_{3}$};
    \node[ivertex]  (v4) at (4.5,1.5) {$\gamma_{4}$};
    \coordinate  (v1pp) at (0,.4) {};
    \coordinate (v1mm) at (0,-.4) {};
    \coordinate  (v2pp) at (3,.4) {};
    \coordinate (v2mm) at (3,-.4) {};
    \coordinate  (v1p) at (0,.2) {};
    \coordinate (v1m) at (0,-.2) {};
    \coordinate  (v2p) at (3,.2) {};
    \coordinate (v2m) at (3,-.2) {};
    \draw[red, dashed, thick] (v1mm) -- (v2mm) -- (v4);
    \draw[red, dashed, thick] (v3) -- (v2pp) -- (v1pp);
    \draw[] (v1) -- (v2) -- (v3);
    \draw[] (v2) -- (v4);
    \draw[dashdotted] (v3) ++(.125,0) arc (-90:90:1.5);
    \draw[blue, dotted, thick] (v1m) -- (v2m) -- (v3);
    \draw[blue, dotted, thick] (v4) -- (v2p) -- (v1p);
  \end{tikzpicture}
  \endpgfgraphicnamed
  \caption{The effect of the involution $\iota$ on a walk that is not
    acceptable.  Here $\gamma_{2}=\gamma_{k-1}$ and
    $\gamma_{1}=\gamma_{k}$. The dashed-and-dotted arc represents the
    portion of the walk not drawn. A perturbation of $\gamma$ is drawn
    as a dotted blue curve, while a perturbation of $\iota(\gamma)$ is
    drawn as a dashed red curve. Both the blue and red paths begin on
    their lower segments.}
  \label{fig:Not-TNB-Cancellation}
\end{figure}

\begin{lemma}
  \label{lem:Main-Even}
  Let $\alpha\in H_{1}(\surface, \Z_{2})$. Then for a graph $G$
  properly embedded on $\surface$ 
  \begin{equation}
    \label{eq:Pointing-Pyramids}
    \log  \sum_{T\in\cc T(\bCSWalks,\cap_{e})}(-1)^{\abs{T}}w_{\alpha}(T) =
    -\sum_{\gamma\in\tfWalks}\frac{w_{\alpha}(\gamma)}{2\abs{\gamma}}.
  \end{equation}
  where $\cc T(\bCSWalks,\cap_{e})$ denotes the set of trivial heaps
  of loops with concurrency relation edge intersection.
\end{lemma}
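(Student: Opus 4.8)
The plan is to combine the two standard generating-function identities of heap theory---the inversion lemma and the logarithmic formula---with the pyramid--walk dictionary of Theorem~\ref{thm:Erasure-Bijection}. Write $M \equiv \sum_{E\in\cc H(\bCSWalks,\cap_{e})} w_{\alpha}(E)$ for the generating function of \emph{all} heaps of loops weighted by $w_{\alpha}$, where the weight of a heap is the product of the weights $w_{\alpha}(C)$ of its loop-pieces. The inversion lemma (Appendix~\ref{sec:Heaps}) identifies the alternating sum over trivial heaps with the reciprocal of $M$, i.e. $\sum_{T\in\cc T(\bCSWalks,\cap_{e})}(-1)^{\abs{T}}w_{\alpha}(T) = M^{-1}$. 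Taking logarithms turns the left-hand side of \eqref{eq:Pointing-Pyramids} into $-\log M$, so it remains to prove $\log M = \sum_{\gamma\in\nbWalks} w_{\alpha}(\gamma)/(2\abs{\gamma})$.

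For this I would invoke the logarithmic formula for heaps, which expresses $\log M$ as a weighted sum over pyramids of loops. The erasure bijection $\tilde\eras$ of Theorem~\ref{thm:Erasure-Bijection} converts each such pyramid---with its maximal loop oriented, i.e. an element of $\cc P^{\bullet}(\bCSWalks,\cap_{e})$---into a totally non-backtracking walk $\gamma\in\tnbWalks$. Since $\tilde\eras$ preserves the multiset of edges, $\abs{\gamma}$ equals the total edge-length of the pyramid; and since the heap weight is the product of the piece weights, Lemma~\ref{lem:TNB-Weight} (multiplicativity of $w_{\alpha}$ under gluing) gives $w_{\alpha}(\gamma) = \prod_{C} w_{\alpha}(C)$, the weight of the pyramid. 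Transporting the logarithmic formula across $\tilde\eras$ should therefore yield $\log M = \sum_{\gamma\in\tnbWalks} w_{\alpha}(\gamma)/(2\abs{\gamma})$.

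Finally, Lemma~\ref{lem:Not-TNB} supplies an involution on $\nbWalks\setminus\tnbWalks$ that reverses the sign of $w_{\alpha}$ while preserving the edge-multiset, hence $\abs{\gamma}$; consequently $\sum_{\gamma\in\nbWalks\setminus\tnbWalks} w_{\alpha}(\gamma)/(2\abs{\gamma}) = 0$, and the sum over $\tnbWalks$ may be enlarged to a sum over all of $\nbWalks$. Chaining the three steps gives $\log\bigl(\sum_{T}(-1)^{\abs{T}}w_{\alpha}(T)\bigr) = -\log M = -\sum_{\gamma\in\nbWalks} w_{\alpha}(\gamma)/(2\abs{\gamma})$, as claimed.

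The step I expect to be the main obstacle is matching the normalizations in the second paragraph: the logarithmic formula divides each pyramid by its number of \emph{pieces} (loops), whereas the target divides each walk by $2\abs{\gamma}$, i.e. by twice its number of \emph{edges}. Reconciling these requires accounting for the rooting-and-orientation freedom of the distinguished maximal loop together with the choice of \emph{which} loop is maximal: summing the oriented representatives of a fixed underlying pyramid weights each configuration by the edge-length of its top loop, and summing over the admissible top loops redistributes this so that twice the total edge-length appears in the denominator. One must also verify (via Lemma~\ref{lem:Reroot}) that the class $\tnbWalks$ and the weight $w_{\alpha}$ are genuinely invariant under cyclic shift and reversal, so that the passage between rooted oriented walks and geometric pyramids is consistent. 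Checking that this redistribution is exact---equivalently, that each loop is the maximal piece in the appropriate fraction of the pyramids on a given piece set---is the crux of the argument.
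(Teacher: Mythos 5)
Your overall architecture (invert the trivial-heap sum to get $\log M$, expand $\log M$ over pyramids, transport across $\tilde\eras$, then cancel the non-totally-non-backtracking walks) is sound in outline, and the first and last steps are exactly right: Theorem~\ref{thm:Heaps-Theorem} with $\cc M=\cc B$ gives the inversion, and the involution of Lemma~\ref{lem:Not-TNB} preserves $\abs{\gamma}$, so the $1/(2\abs{\gamma})$ factors do not obstruct that cancellation. But the middle of your argument has a genuine gap, and you have correctly located it yourself. The logarithmic formula you invoke weights each pyramid by the reciprocal of its number of \emph{pieces}, whereas after applying $\tilde\eras$ (which sends the $2\abs{\ell(\max P)}$ rootings and orientations of the top loop of a geometric pyramid $P$ to that many distinct walks, each of length equal to the total edge count $\norm{P}$ of $P$) the walk sum $\sum_{\gamma\in\tnbWalks} w_{\alpha}(\gamma)/(2\abs{\gamma})$ becomes $\sum_{P} w_{\alpha}(P)\abs{\ell(\max P)}/\norm{P}$. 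Equating this with $\sum_{P} w_{\alpha}(P)/\abs{P}$ amounts to the claim that, among the pyramids on a fixed multiset of loops, each loop occurrence is the maximal piece of exactly the fraction $1/\abs{P}$ of them. This is true, but it is itself a nontrivial combinatorial theorem (for simple piece sets it is the Greene--Zaslavsky statement that the number of acyclic orientations of a connected graph with a prescribed unique sink is independent of the sink); you assert it rather than prove it, and the logarithmic formula itself is not among the heap results the paper supplies --- the appendix contains only Theorem~\ref{thm:Heaps-Theorem}.

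The paper's proof is arranged precisely so that neither of these inputs is needed, and this is where your route genuinely diverges from it. The paper differentiates $\log Z_{G}$ with respect to a single coupling $K_{xy}$; Theorem~\ref{thm:Heaps-Theorem}, applied with $\cc M$ the set of loops through $xy$, identifies $-K_{xy}\,\partial_{K_{xy}}\log Z_{G}$ with a sum over pyramids whose maximal loop contains $xy$, counted once per occurrence of $xy$. Integrating back in $K_{xy}$ then produces the factor $1/N_{xy}$ (the multiplicity of $xy$ in the pyramid) automatically; summing the $2N_{xy}$ rooted, oriented representatives beginning with $xy$ or $yx$ --- legitimate by Lemma~\ref{lem:Reroot} --- and iterating over the edges of $G$ converts $1/(2N_{xy})$ into the desired $1/(2\abs{\gamma})$ without ever invoking the logarithmic formula or the equidistribution of maximal pieces. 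To complete your version you must either prove that equidistribution directly or replace the logarithmic formula by this edge-by-edge integration.
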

\begin{proof}
  Let $Z_{G}=\sum_{T\in\cc T(\bCSWalks(G),\cap_{e})}
  (-1)^{\abs{T}}w_{\alpha}(T)$. Let $K$ stand for the collection of
  variables $\{K_{xy}\}_{xy\in E(G)}$ used in the definition of the
  weight on loops, and let $tK =
  \{tK_{xy}\}$. The fundamental theorem of calculus implies
    \begin{equation}
      \label{eq:t-Derivative}
      \log Z_{G}(K) = \int_{0}^{1} \frac{d}{dt} \log Z_{G}(tK)\,dt,
    \end{equation}
    where $Z_{G}(K)$ indicates the weight on loops uses the variables
    $K$.  If a trivial heap of loops has $n$ edges contained in its
    labels, the factor of $n$ resulting from the derivative with
    respect to $t$ in~\eqref{eq:t-Derivative} can be replaced by
    summing over the edges contained in the trivial heap:
  \begin{equation}
      \label{eq:Heap-Expression}
      \log Z_{G}(K) = \int_{0}^{1} -\frac{1}{t Z_{G}(tK)} \sum_{xy\in E(G)} 
    \sum_{C\colon xy\in E(C)} w_{\alpha}(C)
    \sum_{T\in\mathcal{T}_{C}} (-1)^{\abs{T}}w_{\alpha}(T)\,dt.
    \end{equation}
    The second sum is over loops $C$ containing $xy$, and
    $\mathcal{T}_{C}$ is the set of trivial heaps whose elements do
    not intersect the loop $C$. The weight $w_{\alpha}$ on the
    right-hand side of~\eqref{eq:Heap-Expression} uses the variables
    $tK$. The remainder of the proof consists of manipulating the
    right-hand side of~\eqref{eq:Heap-Expression}, and to simplify
    expressions we will not make the $t$ dependence explicit.

  Equation~\eqref{eq:Heap-Expression} is powerful, as the
  heap theorem immediately yields a simple expression for the ratio
  of partition functions in~\eqref{eq:Heap-Expression}. More precisely, 
  Theorem~\ref{thm:Heaps-Theorem} implies
  \begin{equation}
    \label{eq:OUO-Pre}
    Z_{G}^{-1} \sum_{T\in \mathcal{T}_{C}} (-1)^{\abs{T}}w_{\alpha}(T)
    = \sum_{H\in\cc H_{C}} w_{\alpha}(H),
  \end{equation}
  where $\cc H_{C}$ is the set of heaps of loops whose maximal
  elements intersect $C$. Hence~\eqref{eq:Heap-Expression} simplifies
  to
  \begin{equation}
    \label{eq:OUO-Pyramids.1}
    -\frac{1}{tZ_{G}}\sum_{xy\in E(G)} \sum_{C\colon xy\in E(C)} w_{\alpha}(C)
    \sum_{T\in\mathcal{T}_{C}} (-1)^{\abs{T}}w_{\alpha}(T) = -
    \frac{1}{t} \sum_{xy\in E(G)}
    \sum_{C\colon xy\in E(C)}\sum_{H\in\cc H_{C}}
    w_{\alpha}(C)w_{\alpha}(H),
  \end{equation}
  The double sum in equation~\eqref{eq:OUO-Pyramids.1} equals $1/2$
  the sum over pairs $(\orient{xy},P)$ and $(\orient{yx},P)$, where
  $P$ is a pyramid whose maximal elements' label is $C$.
  \Cref{cor:Walks-Pyramids} allows us to rewrite this as a sum over
  totally non-backtracking walks with initial edges $\orient{xy}$ or
  $\orient{yx}$, and~\Cref{lem:Not-TNB} allows us to expand the range
  of the sum to all tail free non-backtracking walks with initial edge
  $\orient{xy}$ or $\orient{yx}$. If a given walk contains
  $\abs{\gamma}$ steps then the weight of the walk is
  $t^{\abs{\gamma}}$ times the weight using the $K$
  variables. Integrating $\frac{1}{t} t^{\abs{\gamma}}$ from $0$ to
  $1$ results in a multiplicative factor of $1/\abs{\gamma}$. Putting
  this together gives
  \begin{equation}
   \log Z_{G}= -\sum_{\gamma\in\tfWalks}
   \frac{w_{\alpha}(\gamma)}{2\abs{\gamma}}. \qedhere
  \end{equation}
\end{proof}

Recall that for $\alpha\in H_{1}(\surface,\Z_{2})$ the value
$c(\alpha)$ is defined by
\begin{equation}
  (-1)^{c(\alpha)}=\prod_{j=1}^{g}(-1)^{1+\ab{\alpha,e^{j}_{1}}+\ab{\alpha,e^{j}_{2}}
  + \ab{\alpha,e^{j}_{1}}\ab{\alpha,e^{j}_{2}}}.
\end{equation}
The central theorem of the paper,
Theorem~\ref{thm:Introduction-Main-Even}, can now be proven.

\ThmOne*
\begin{proof}
  Apply Lemma~\ref{lem:Main-Even} to
  Lemma~\ref{lem:Matchings-Decompositions}, using
  Corollary~\ref{cor:Decomposition-Weight-Prime}.
\end{proof}

\subsubsection{Some Consequences for the Ising Model}
\label{sec:cons-ising-model}

Throughout this section assume $G$ is a finite graph that is properly
embedded in $\surface$, a genus $g$ surface. The Ising model on $G$
will refer to the Ising model with couplings $\{L_{xy}\}_{xy\in
  E(G)}$, and $K_{xy}\equiv \tanh L_{xy}$. Proposition~\ref{prop:HTE}
combined with Theorem~\ref{thm:Introduction-Main-Even} immediately
gives a formula for the Ising model partition function.

\begin{theorem}
  \label{thm:Ising-Z-Surfaces}
  The partition function $Z(G)$ for the Ising model on a finite graph
  $G$ can be written
  \begin{equation*}
    Z(G) = \frac{1}{2^{g}}\sum_{\alpha\in
      H_{1}(\surface,\Z_{2})}(-1)^{c(\alpha)}\exp\ob{ -
    \sum_{\gamma\in\tfWalks} \frac{w_{\alpha}(\gamma)} {2\abs{\gamma}}}.
  \end{equation*}
  where $c(\alpha)$ is given by Equation~\eqref{eq:Constant-c} and
  $w_{\alpha}$ is defined by Equation~\eqref{eq:Weights}.
\end{theorem}

\begin{remark}
  \label{rem:Ising-Solution}
  By approximating $\Z^{2}$ by finite subsets and applying
  the above formula the Onsager solution to the Ising
  model can be recovered. See~\cite{Sherman1960} for details.
\end{remark}

As remarked in the introduction, the fact that
Theorem~\ref{thm:Ising-Z-Surfaces} applies to non-planar graphs allows
for the derivation of formulas for spin correlations. Let
$\ab{\sigma_{a}\sigma_{b}}_{H}$ denote the expectation of the product
of the spins at $a$ and $b$ in the Ising model on the graph
$H$. Define $G_{ab}$ to be the graph with vertex set $V(G)$ and edge
set $E(G)\cup\{ab\}$. A calculation using the definition of the Ising
model shows
\begin{equation}
  \label{eq:Fundamental-Correlation-Identity}
  \ab{\sigma_{a}\sigma_{b}}_{G} =  \frac{\partial}{\partial
    K_{ab}}\Big|_{K_{ab}=0} \log Z(G_{ab}).
\end{equation}

Using Equation~\eqref{eq:Fundamental-Correlation-Identity} along with
Theorem~\ref{thm:Ising-Z-Surfaces} gives a
formula for spin correlations, but the weight on walks is the weight
inherited from a surface on which the graph $G_{ab}$ embeds. The
expressions for correlations can be simplified using some corollaries
of Theorem~\ref{thm:General-Intersection-Theorem}. Some additional
notation will be needed.

Fix $a,b\in V(G)$. Fix dual vertices $a^{\star},b^{\star}\in
V(G^{\star})$ such that the faces corresponding to $a^{\star}$ and
$b^{\star}$ contain $a$ and $b$. Let $\surface^{\prime}_{a,b}$ denote
the surface that results from removing small disks touching
$a^{\star}$ and $b^{\star}$, and attaching a cylinder to the
boundaries of the removed disks. On $\surface^{\prime}_{a,b}$, let
$e^{g+1}_{1},e^{g+1}_{2}$ be closed curves such that the basis of
$H_{1}(\surface,\Z_{2})$ along with these additional curves forms a
basis of $H_{1}(\surface^{\prime}_{a,b},\Z_{2})$, with $e^{g+1}_{1}$
entirely contained in the added cylinder. Let $\eta_{1}$ denote the
portion of $e^{g+1}_{2}$ in $\surface$, and $\eta_{2}$ the portion in
$\surface^{\prime}_{a,b}\setminus\surface$. Choose $e^{g+1}_{2}$ such
that $\eta_{1}$ is contained in the dual graph $G^{\star}$ outside a
small neighbourhood of $a^{\star}$ and $b^{\star}$, and such that the
edge $ab$ does not intersect $\eta_{2}$. See
Figure~\ref{fig:Handle-Surface}.  Recall that $\gamma\eta$ denotes the
concatenation of the curves $\gamma$ and $\eta$.

\begin{figure}
  \centering
  \beginpgfgraphicnamed{Figure12}
  \begin{tikzpicture}
    \begin{scope}
    \clip (-3.1,-2.5) circle (2);
    \draw[black!50] (-4,-4) grid (-1,-.5);
   \end{scope} 
   \begin{scope}
     \clip (5.1,0.1) circle (2);
    \draw[black!50] (3,-2) grid (6,1);
    \end{scope}
    \node[svertex] (a) at (-3,-3) {};
    \node[svertex] (b) at (5,-1) {};
    \node[svertex] (ap) at (-2.51,-3.5) {};
    \node[svertex] (bp) at (4.5,-.5) {};
    \node[black,left] at (a) {$a$};
    \node[black,right] at (b) {$b$};
    \draw[white,very thick,fill] (-2.14,-3.65) .. controls (0,1)
     and (2,2) .. (4.14,-.65);
   \draw[white,very thick,fill] (-2.56,-3.65) ..  controls (0,1.4)
    and (2,2.3) .. (4.56,-.65);
    \draw[black] (-2.14,-3.65) .. controls (0,1)
     and (2,2) .. (4.14,-.65);
   \draw[black] (-2.56,-3.65) ..  controls (0,1.4)
    and (2,2.3) .. (4.56,-.65);
    \draw[dashed] (-2.35,-3.5) .. controls (0,1.2)
     and (2,2.15) .. (4.35,-.5);
    \node[black] at (ap) {$a^{\star}\,\,\,\,\,\,\,\,$};
    \node[black,right] at (bp) {$b^{\star}$};
    \draw (a) ++ (.44,-.62) arc (-180:0:.21);
    \draw[dashed] (a) ++ (.44,-.65) arc (180:0:.21);
    \draw (b) ++ (-.44,.38) arc (0:-180:.21);
    \draw[dashed] (b) ++ (-.44,.35) arc (0:180:.21);
    \draw[dashed] (-2.35,-3.5) -- (-2.35,.5) -- (.25,.5);
    \draw[dashed] (1,.5) -- (2.65,.5);
    \draw[dashed] (3.5,.5) -- (4.35,.5) -- (4.35,-.5);
    \node[above] at (-1.5,.5) {$\eta_{1}$};
  \end{tikzpicture}
  \endpgfgraphicnamed
  \caption{A schematic drawing of a portion of the surface
    $\surface^{\prime}_{a,b}$. The dashed line running along the
    handle is $\eta_{2}$. For visual clarity the edge $ab$ has not
    been drawn. The basis curve $e^{g+1}_{2}$ is comprised of
    $\eta_{1}$ and $\eta_{2}$.  The basis curve $e^{g+1}_{1}$, not
    pictured, is contained entirely on the handle, and cuts the handle
    in two.}
  \label{fig:Handle-Surface}
\end{figure}
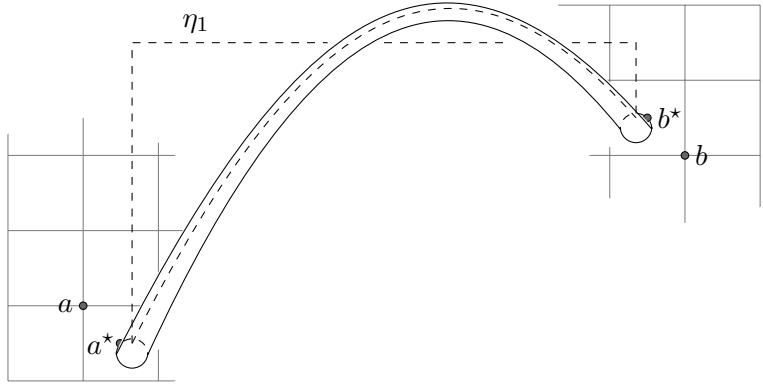

\begin{corollary}
  \label{cor:Turning-Comparison}
  Let $\gamma$ be a closed regular curve contained in
  $\surface\subset\surface^{\prime}_{a,b}$. Then
  \begin{equation*}
    \tau_{\surface^{\prime}_{a,b}}(\gamma) = \tau_{\surface}(\gamma)
    + \ab{\gamma,e^{g+1}_{2}} \mod 2.
  \end{equation*}
\end{corollary}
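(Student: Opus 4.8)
The plan is to avoid working directly with the region-labelling definition of the turning number and instead to use Theorem~\ref{thm:General-Intersection-Theorem} as a black box on each of the two surfaces. The crucial observation is that the signed self-intersection count $N_{+}(\gamma)-N_{-}(\gamma)$ is intrinsic to $\gamma$ as a curve: a crossing is a double point together with the sign of the ordered pair of tangent vectors there, and since $\surface$ sits inside $\surface^{\prime}_{a,b}$ as an oriented submanifold (the attaching disks are removed around the face centres $a^{\star},b^{\star}$, away from $\gamma$), the double points of $\gamma$ and their signs are unchanged whether $\gamma$ is viewed in $\surface$ or in $\surface^{\prime}_{a,b}$. Hence $N_{+}(\gamma)-N_{-}(\gamma)$ takes the same value in both computations, and the two turning numbers can be compared through this common quantity.

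First I would apply Theorem~\ref{thm:General-Intersection-Theorem} to $\gamma$ regarded as a curve on $\surface$ (genus $g$, symplectic basis $\{e^{k}_{i}\}_{k=1}^{g}$), and then again to $\gamma$ regarded as a curve on $\surface^{\prime}_{a,b}$ (genus $g+1$, symplectic basis $\{e^{k}_{i}\}_{k=1}^{g+1}$). The intersection numbers $a_{i}(\gamma)=\ab{e^{i}_{1},\gamma}$ and $b_{i}(\gamma)=\ab{e^{i}_{2},\gamma}$ for $1\le i\le g$ agree in the two formulas, since the corresponding basis curves lie in $\surface$ and are unchanged. Subtracting the two instances of the formula, everything cancels except $\tau_{\surface}(\gamma)$, $\tau_{\surface^{\prime}_{a,b}}(\gamma)$, and the single extra term indexed by $k=g+1$, giving
\[
  \tau_{\surface^{\prime}_{a,b}}(\gamma)=\tau_{\surface}(\gamma)+\bigl(a_{g+1}(\gamma)b_{g+1}(\gamma)+a_{g+1}(\gamma)+b_{g+1}(\gamma)\bigr)\mod 2.
\]

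It then remains to evaluate the two new intersection numbers. By construction $e^{g+1}_{1}$ lies entirely on the added handle, which is disjoint from $\gamma\subset\surface$, so $a_{g+1}(\gamma)=\ab{e^{g+1}_{1},\gamma}=0$. The remaining term is $b_{g+1}(\gamma)=\ab{e^{g+1}_{2},\gamma}$; geometrically $\gamma$ meets $e^{g+1}_{2}$ only along its portion $\eta_{1}\subset\surface$, and since the intersection pairing is symmetric modulo $2$ this equals $\ab{\gamma,e^{g+1}_{2}}$. With $a_{g+1}(\gamma)=0$ the bracketed expression collapses to $b_{g+1}(\gamma)=\ab{\gamma,e^{g+1}_{2}}$, which is exactly the claimed correction term.

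The main point requiring care --- and what makes the argument work at all --- is the invariance of $N_{+}(\gamma)-N_{-}(\gamma)$ under the inclusion $\surface\hookrightarrow\surface^{\prime}_{a,b}$, together with checking that $\gamma$ remains a generic regular curve on $\surface^{\prime}_{a,b}$ so that Theorem~\ref{thm:General-Intersection-Theorem} applies verbatim there. One should also note that both $\abs{\chi(\surface)}=2g-2$ and $\abs{\chi(\surface^{\prime}_{a,b})}=2g$ are even, so the two turning numbers, though a priori defined modulo different integers, reduce consistently modulo $2$ and the subtraction above is legitimate.
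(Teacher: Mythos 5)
Your proof is correct and follows essentially the same route as the paper: apply Theorem~\ref{thm:General-Intersection-Theorem} to $\gamma$ once on $\surface$ and once on $\surface^{\prime}_{a,b}$, observe that $N_{+}(\gamma)-N_{-}(\gamma)$ and the intersection numbers with the old basis curves are unchanged, and note that $\ab{\gamma,e^{g+1}_{1}}=0$ since $e^{g+1}_{1}$ lies on the handle, so the only surviving correction is $\ab{\gamma,e^{g+1}_{2}}$. The paper's proof is a two-line version of exactly this argument; your additional remarks on the intrinsic nature of the crossing signs and the mod-$2$ compatibility of the two moduli are sensible elaborations rather than a different method.
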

\begin{proof}
  Apply Theorem~\ref{thm:General-Intersection-Theorem} to the curve
  $\gamma$ twice, once considered as a curve on $\surface$, once
  considered as a curve on $\surface^{\prime}_{a,b}$. When considered
  as a curve on $\surface^{\prime}_{a,b}$, $\gamma$ can intersect only
  $e^{g+1}_{2}$, as $e^{g+1}_{1}$ is disjoint from $\surface$.
\end{proof}

\begin{corollary}
  \label{cor:Turning-Comparison-Handle}
  Let $\gamma$ be a regular curve from $a^{\star}$ to $b^{\star}$ in
  $\surface$. Then
  \begin{equation*}
    \tau_{\surface^{\prime}_{a,b}}(\gamma\eta_{2})=
    \tau_{\surface}(\gamma\eta_{1}) +
    \abs{\Delta_{2}(\{\gamma,\eta_{1}\})} + 
    1 \mod 2.
  \end{equation*}
\end{corollary}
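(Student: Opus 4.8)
The plan is to apply Theorem~\ref{thm:General-Intersection-Theorem} twice --- once to the closed curve $\gamma\eta_{1}$ viewed on $\surface$ (genus $g$) and once to the closed curve $\gamma\eta_{2}$ viewed on $\surface^{\prime}_{a,b}$ (genus $g+1$) --- and then subtract the two resulting congruences mod $2$. Writing $a_{i}(\delta)=\ab{\delta,e^{i}_{1}}$ and $b_{i}(\delta)=\ab{\delta,e^{i}_{2}}$, this subtraction yields
\begin{align*}
  \tau_{\surface^{\prime}_{a,b}}(\gamma\eta_{2}) - \tau_{\surface}(\gamma\eta_{1})
  &= \big[(N_{+}-N_{-})(\gamma\eta_{2}) - (N_{+}-N_{-})(\gamma\eta_{1})\big] \\
  &\quad - \Big[\textstyle\sum_{i=1}^{g+1}(a_{i}b_{i}+a_{i}+b_{i})(\gamma\eta_{2})
     - \sum_{i=1}^{g}(a_{i}b_{i}+a_{i}+b_{i})(\gamma\eta_{1})\Big] \mod 2,
\end{align*}
so the whole task reduces to evaluating the two bracketed differences. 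This mirrors the proof of Corollary~\ref{cor:Turning-Comparison}, the difference being that here $\gamma$ is an arc completed in two genuinely different ways on two different surfaces.

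First I would handle the self-intersection bracket. Mod $2$ the signed count $N_{+}-N_{-}$ agrees with the total double-point count $\abs{\Delta_{2}(\cdot)}$. Since $\eta_{2}$ lies entirely on the attached handle while $\gamma\subset\surface$, the arc $\eta_{2}$ is simple and disjoint from $\gamma$, so $\abs{\Delta_{2}(\gamma\eta_{2})}\equiv\abs{\Delta_{2}(\gamma)}$. As $\eta_{1}$ is a sub-arc of the simple closed basis curve $e^{g+1}_{2}$ it is also simple, giving $\abs{\Delta_{2}(\gamma\eta_{1})}\equiv\abs{\Delta_{2}(\gamma)}+\abs{\Delta_{2}(\{\gamma,\eta_{1}\})}$. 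Hence the first bracket is $\equiv\abs{\Delta_{2}(\{\gamma,\eta_{1}\})}\mod 2$.

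Next I would treat the homology bracket. The point is that $e^{g+1}_{2}=\eta_{1}\eta_{2}$ pairs trivially with every $e^{i}_{j}$, $i\le g$, in the symplectic basis; since $\eta_{2}$ is disjoint from those curves, this forces $\ab{\eta_{1},e^{i}_{j}}\equiv 0$, so that $a_{i}(\gamma\eta_{1})\equiv a_{i}(\gamma\eta_{2})$ and $b_{i}(\gamma\eta_{1})\equiv b_{i}(\gamma\eta_{2})$ for all $i\le g$. Thus every term with $i\le g$ cancels, and only the $(g+1)$-term survives. Now $e^{g+1}_{1}$ is a meridian confined to the handle, disjoint from $\gamma$ and crossed exactly once by $\eta_{2}$, so $a_{g+1}(\gamma\eta_{2})=\ab{\gamma\eta_{2},e^{g+1}_{1}}\equiv 1$. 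The convenient consequence is that once $a_{g+1}\equiv 1$, the term $a_{g+1}b_{g+1}+a_{g+1}+b_{g+1}\equiv b_{g+1}+1+b_{g+1}\equiv 1\mod 2$ \emph{regardless} of $b_{g+1}$; so the second bracket equals $1$ and I never need to compute $b_{g+1}$.

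Combining the two brackets gives $\tau_{\surface^{\prime}_{a,b}}(\gamma\eta_{2}) - \tau_{\surface}(\gamma\eta_{1}) \equiv \abs{\Delta_{2}(\{\gamma,\eta_{1}\})} - 1 \equiv \abs{\Delta_{2}(\{\gamma,\eta_{1}\})}+1\mod 2$, which is the claim. I expect the main obstacle to be the geometric intersection bookkeeping, namely verifying $\ab{\eta_{1},e^{i}_{j}}\equiv 0$ for $i\le g$ and $a_{g+1}(\gamma\eta_{2})\equiv 1$ directly from the construction of the handle and the basis curves $e^{g+1}_{1},e^{g+1}_{2}$ in Figure~\ref{fig:Handle-Surface}. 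The observation that $a_{g+1}\equiv 1$ collapses the entire $(g+1)$-term to $1$ is what lets me avoid the delicate issue that $\gamma\eta_{2}$ and $e^{g+1}_{2}$ share the arc $\eta_{2}$, which would otherwise require a careful perturbation to make sense of $b_{g+1}$.
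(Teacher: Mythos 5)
Your proof is correct and follows essentially the same route as the paper: both rest on applying Theorem~\ref{thm:General-Intersection-Theorem} to the two completed closed curves and tracking how the double points and the pairings with the basis curves change. The only cosmetic difference is that the paper passes through the intermediate quantity $\tau_{\surface^{\prime}_{a,b}}(\gamma\eta_{1})$ and then invokes Corollary~\ref{cor:Turning-Comparison}, whereas your observation that $\ab{\gamma\eta_{2},e^{g+1}_{1}}=1$ collapses the $(g+1)$st term to $1$ outright and lets you compare the two surfaces directly, bypassing that corollary.
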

\begin{proof}
  Let $\gamma_{i}=\gamma\eta_{i}$. By the definitions of the curves given,
  $\ab{\gamma_{2},e^{g+1}_{1}}=1$,
  $\ab{\gamma_{1},e^{g+1}_{1}}=0$, and
  $\gamma_{2}=\gamma_{1}+e^{g+1}_{2}$ in $\Z_{2}$ homology.  The curves
  $\gamma_{1}$ and $\gamma_{2}$ have the same set of double
  points aside from when $\gamma$ intersects $\eta_{1}$ so
  Theorem~\ref{thm:General-Intersection-Theorem} implies
  \begin{align*}
    \tau_{\surface^{\prime}_{a,b}}(\gamma_{2}) &= N_{+}(\gamma_{2}) -
    N_{-}(\gamma_{2})-\sum_{j=1}^{g+1} \ob{\ab{\gamma_{2},e^{j}_{1}}
      \ab{\gamma_{2},e^{j}_{2}} + \ab{\gamma_{2},e^{j}_{1}} +
      \ab{\gamma_{2},e^{j}_{2}}} -1 \mod 2
    \\
    &= N_{+}(\gamma_{1}) - N_{-}(\gamma_{1}) +
    \abs{\Delta_{2}(\{\gamma,\eta_{1}\})} -
    \sum_{j=1}^{g+1}\ob{\ab{\gamma_{1},e^{j}_{1}}
      \ab{\gamma_{1},e^{j}_{2}} + \ab{\gamma_{1},e^{j}_{1}} +
      \ab{\gamma_{1},e^{j}_{2}}} -
    \ab{\gamma_{1},e^{g+1}_{2}} \mod 2 \\
    &= \tau_{\surface^{\prime}_{a,b}}(\gamma_{1}) +
    \abs{\Delta_{2}(\{\gamma,\eta_{1}\})} - \ab{\gamma_{1},e^{g+1}_{2}} + 1 \mod 2\\
    &= \tau_{\surface}(\gamma_{1}) +
    \abs{\Delta_{2}(\{\gamma,\eta_{1}\})} + 1 \mod 2,
  \end{align*}
  where the second equality is an intersection form calculation, the
  third is another application of
  Theorem~\ref{thm:General-Intersection-Theorem}, and the final
  equality is by Corollary~\ref{cor:Turning-Comparison}.
\end{proof}

Define couplings $\tilde K_{xy}$ by
\begin{equation*}
  \tilde K_{xy} =
    \begin{cases}
      \phantom{-}K_{xy} & (xy)^{\star}\notin \eta_{1} \\
      -K_{xy} & (xy)^{\star}\in\eta_{1}.
    \end{cases}. 
\end{equation*}

\begin{proposition}
  \label{prop:Intersections-Dislocation}
  For all walks $\gamma$
  \begin{equation*}
    (-1)^{\abs{\Delta_{2}(\{\gamma,\eta_{1}\})}} =
    \mathrm{sign}\ob{ \frac{\prod_{xy\in \gamma}\tilde
      K_{xy}}{\prod_{xy\in\gamma}K_{xy}}}.
  \end{equation*}
\end{proposition}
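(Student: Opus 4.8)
The plan is to show that both sides of the claimed identity equal $(-1)^{N}$, where $N$ is the number of edges $xy$ traversed by $\gamma$ (counted with the multiplicity with which $\gamma$ uses them) whose dual edge $(xy)^{\star}$ lies on $\eta_{1}$. The right-hand side is the easier of the two, and I would dispose of it first. Since $\tilde K_{xy}/K_{xy}$ is exactly $+1$ when $(xy)^{\star}\notin\eta_{1}$ and exactly $-1$ when $(xy)^{\star}\in\eta_{1}$, the ratio factorises as
\begin{equation*}
  \frac{\prod_{xy\in\gamma}\tilde K_{xy}}{\prod_{xy\in\gamma}K_{xy}}
  = \prod_{xy\in\gamma}\frac{\tilde K_{xy}}{K_{xy}} = (-1)^{N}.
\end{equation*}
Each factor is $\pm 1$, so the product is already a sign and applying $\mathrm{sign}(\cdot)$ changes nothing.

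For the left-hand side I would invoke the standard primal--dual incidence fact for a surface embedding: each primal edge $xy$ meets the dual graph $G^{\star}$ only in its own dual edge $(xy)^{\star}$, which it crosses exactly once. First I would note that $\eta_{1}$, being contained in $G^{\star}$ away from small neighbourhoods of $a^{\star}$ and $b^{\star}$, runs through the interiors of the faces of $G$ and therefore meets the walk $\gamma$ only in the interiors of edges of $G$, never at a vertex. Consequently, after perturbing $\gamma$ and $\eta_{1}$ into general position, each traversal by $\gamma$ of an edge $xy$ with $(xy)^{\star}\in\eta_{1}$ contributes exactly one transverse double point to $\Delta_{2}(\{\gamma,\eta_{1}\})$, while traversals of edges whose dual is not on $\eta_{1}$ contribute none. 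Summing over traversals gives $\abs{\Delta_{2}(\{\gamma,\eta_{1}\})} = N$, so that $(-1)^{\abs{\Delta_{2}(\{\gamma,\eta_{1}\})}}=(-1)^{N}$, which matches the right-hand side.

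The one point requiring care is the bookkeeping of multiplicities: when $\gamma$ uses a single edge $xy$ with $(xy)^{\star}\in\eta_{1}$ several times, the associated crossings of $\eta_{1}$ all cluster near one geometric point, and I would check that a general-position perturbation separates the parallel strands into the right number of transverse double points. Since only the parity of the exponent matters, it suffices to see that each repeated traversal flips the count, which reproduces $N \bmod 2$ regardless of the fine details of the perturbation. A secondary point is to confirm that the endpoint regions near $a^{\star}$ and $b^{\star}$, where $\eta_{1}$ may depart from $G^{\star}$, produce no extra intersections with $\gamma$; this is arranged by taking those neighbourhoods small and disjoint from the edges of $\gamma$, exactly as in the construction of $\eta_{1}$. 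With these observations in hand the two sides agree and the identity follows.
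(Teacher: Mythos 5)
Your proof is correct and follows essentially the same route as the paper, whose entire argument is the one-line observation that an edge $xy$ of $\gamma$ crosses $\eta_{1}$ if and only if $(xy)^{\star}\in\eta_{1}$; you simply spell out the factorisation of the ratio of couplings and the perturbation/multiplicity bookkeeping that the paper leaves implicit.
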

\begin{proof}
  By the definition of $\eta_{1}$ an edge $xy$ of $\gamma$ crosses $\eta_{1}$
  if and only if the edge $xy$ is dual to an edge in $\eta_{1}$.
\end{proof}

If $\gamma$ is a non-backtracking walk from $a$ to $b$, extend
$\gamma$ to begin at $a^{\star}$ and end at $b^{\star}$. With this
extension the walk $\gamma$ can be concatenated with
$\eta_{2}$, and the turning number of $\gamma$ is defined to be the
turning number of the closed curve $\gamma\eta_{2}$. Let $\tilde
w_{\alpha}(\gamma)$ be the weight of a walk when the couplings $K_{xy}$
are replaced by $\tilde K_{xy}$.
\ThmTwo*

\begin{remark}
  \label{rem:Correlation-Remark}
  It is worth stressing that
  Equation~\eqref{eq:Introduction-Correlation-Formula-HT} requires the
  walks to be completed to closed walks. While the geometric
  assumptions on the intersections of the edge $ab$ and the generator
  $e^{g+1}_{2}$ are not essential, altering these assumptions may
  alter the correlation formula.
\end{remark}

\begin{proof}[Proof of Theorem~\ref{thm:Introduction-Correlation-Formula-HT}]
  We use Theorem~\ref{thm:Ising-Z-Surfaces} and
  Equation~\eqref{eq:Fundamental-Correlation-Identity}.  Note that
  $Z(G_{ab})$ at $K_{ab}=0$ is $Z(G)$. To calculate the derivative in
  Equation~\eqref{eq:Fundamental-Correlation-Identity}, note that if a
  loop $\gamma$ contains the edge $ab$ exactly once, there are
  $2\abs{\gamma}$ walks that are equivalent to $\gamma$. As only walks
  which contain the edge $ab$ exactly once survive being
  differentiated and having $K_{ab}$ set to zero
  \begin{equation}
    \label{eq:Correlation-High-1}
    \ab{\sigma_{a}\sigma_{b}}_{G} = -\frac{1}{2^{g+1}Z(G)}
    \sum_{\alpha\in H_{1}(\surface^{\prime}_{a,b},\Z_{2})}
    \sum_{\gamma\in\nbWalks(G,a,b)} (-1)^{c(\alpha)} w^{\prime}_{\alpha}(\gamma) \exp\ob{-
    \sum_{\gamma\in\tfWalks(G)} \frac{w^{\prime}_{\alpha}(\gamma)}{2\abs{\gamma}}},
  \end{equation}
  where $K_{ab}=1$, and walks from $a$ to $b$ are taken to be
  closed by adding the edge $ba$ at the end. The notation
  $w^{\prime}_{\alpha}$ indicates the weights are computed using
  the turning number on the surface $\surface^{\prime}_{a,b}$.

  For $\alpha\in H_{1}(\surface^{\prime},\Z_{2})$, let
  $\alpha=\beta + c_{1}e^{g+1}_{1} + c_{2}e^{g+1}_{2}$, where $\beta\in
  H_{1}(\surface,\Z_{2})$. First consider the sum in the
  exponential. Each contributing walk is contained in
  $\surface\subset \surface^{\prime}$; applying
  Corollary~\ref{cor:Turning-Comparison} gives
  \begin{align}
    \label{eq:Turning-Exp-1}
    \tau_{\surface^{\prime}_{a,b}}(\gamma) + \ab{\gamma,\alpha} &=
    \tau_{\surface}(\gamma) + c_{1}\ab{\gamma,e^{g+1}_{1}} +
    (1+c_{2})\ab{\gamma,e^{g+1}_{2}}
    \\
    \label{eq:Turning-Exp-2}
    &= \tau_{\surface}(\gamma) + (1+c_{2})\ab{\gamma,e^{g+1}_{2}},
  \end{align}
  as $\ab{\gamma,e^{g+1}_{1}} = 0$ for a walk contained in $\surface$.
  For the sum over walks in $\nbWalks(G,a,b)$ outside the exponential,
  Corollary~\ref{cor:Turning-Comparison-Handle} and the fact that
  $\gamma\eta_{2} = \gamma\eta_{1} + e^{g+1}_{2}$ in homology
  yields
  \begin{align}
    \label{eq:Turning-Exp-3.1}
    \tau_{\surface^{\prime}_{a,b}}(\gamma\eta_{2}) &=
    \tau_{\surface}(\gamma\eta_{1}) +
    \abs{\Delta_{2}(\{\gamma,\eta_{1}\})} + 1 \mod 2 \\
    \label{eq:Turning-Exp-3.2}
    \ab{\gamma\eta_{2},\alpha} &= \ab{\gamma\eta_{1},\beta} +
    c_{1}\ab{\gamma\eta_{1},e^{g+1}_{1}} +
    c_{2}\ab{\gamma\eta_{1},e^{g+1}_{2}} + c_{1} \mod 2 \\
    \label{eq:Turning-Exp-3.3}
    c(\alpha) &= c(\beta) + 1 + c_{1} + c_{2} + c_{1}c_{2},
  \end{align}
  where the second equality has used $\ab{e^{g+1}_{2},\alpha} =
  c_{1}$. Summing~\eqref{eq:Turning-Exp-3.1},
  \eqref{eq:Turning-Exp-3.2} and~\eqref{eq:Turning-Exp-3.3} gives
  \begin{equation}
    \label{eq:Turning-Exp-3}
    c(\alpha) + \tau_{\surface^{\prime}_{a,b}}(\gamma\eta_{2}) +
    \ab{\gamma\eta_{2},\alpha} =c(\beta)+ \tau_{\surface}(\gamma\eta_{1}) +
    \ab{\gamma\eta_{1},\beta} + \abs{\Delta_{2}(\{\gamma,\eta_{1}\})}
    + c_{2}\ob{1+c_{1}+ \ab{\gamma\eta_{1},e^{g+1}_{2}}} \mod 2
  \end{equation}
  as $\ab{\gamma\eta_{1},e^{g+1}_{1}}=0$, since $\gamma$ concatenated
  with $\eta_{1}$ is contained in $\surface$. The weight in the
  exponent given by~\eqref{eq:Turning-Exp-2}
  is independent of $c_{1}$, while the weight on walks in
  $\nbWalks(G,a,b)$ is odd in $c_{1}$ if $c_{2}=1$. Thus only $\alpha$
  with $c_{2}=0$ contribute. Therefore
  \begin{align*}
    \label{eq:Surface-Weight-Sum}
    \sum_{\alpha\in H_{1}(\surface^{\prime},\Z_{2})} (-1)^{c(\alpha)}
    w_{\alpha}^{\prime}(\gamma) &\exp\ob{- \sum_{\gamma\in\tfWalks(G)}
      \frac{w^{\prime}_{\alpha}(\gamma)}{2\abs{\gamma}}}\\ &=
    \sum_{\beta\in H_{1}(\surface,\Z_{2})}
    \sum_{\{c_{1},c_{2}\}\in\{0,1\}^{2}} w_{\alpha}^{\prime}(\gamma) \exp\ob{-
      \sum_{\gamma\in\tfWalks(G)}
      \frac{w^{\prime}_{\alpha}(\gamma)}{2\abs{\gamma}}} \\
    &= 2\sum_{\beta\in H_{1}(\surface,\Z_{2})}
    (-1)^{c(\beta)}w_{\beta}(\gamma) (-1)^{
      \abs{\Delta_{2}(\{\gamma,\eta_{1}\})}} \exp\ob{-
      \sum_{\gamma\in\tfWalks(G)}(-1)^{\ab{\gamma,e^{g+1}_{2}}}
      \frac{w_{\beta}(\gamma)}{2\abs{\gamma}}}\\
    &= 2\sum_{\beta\in H_{1}(\surface,\Z_{2}) } (-1)^{c(\beta)}\tilde
    w_{\beta}(\gamma) \exp\ob{- \sum_{\gamma\in\tfWalks(G)}
      \frac{\tilde w_{\beta}(\gamma)}{2\abs{\gamma}}},
  \end{align*}
  where the use of $\tilde w_{\beta}$ follows from
  Proposition~\ref{prop:Intersections-Dislocation} and the fact that a
  closed walk in $\surface$ can only intersect $e^{g+1}_{2}$ by
  intersecting $\eta_{1}$.
\end{proof}

Corollary~\ref{cor:High-Temperature-Planar-Correlation} for the
spin-spin correlations of a planar Ising model follows by
specializing the previous result to the planar case.

\begin{remark}
  \label{rem:Other-Correlations}
  While Theorem~\ref{thm:Introduction-Correlation-Formula-HT} does not
  apply to correlations of neighbouring spins, formulas for these
  correlations can be derived in a similar but simpler manner.
\end{remark}

\subsubsection{Planar Ising Duality}
\label{sec:Planar-Duality}

Let $G$ be a properly embedded planar graph. As in the previous section let
$a,b\in V(G)$ be fixed vertices, and $a^{\star},b^{\star}\in
V(G^{\star})$ be dual vertices such that $a$ and $b$ are contained in the
faces corresponding to $a^{\star}$ and $b^{\star}$. Let $\eta$ be a
path from $a^{\star}$ to $b^{\star}$ contained in the dual graph.

The analysis of Ising models so far has relied on
Proposition~\ref{prop:HTE} to rewrite the partition function of an
Ising model as a generating function of even subgraphs. For the Ising
model with plus boundary conditions on planar graphs there is also a
representation in terms of even subgraphs, the \emph{low-temperature
  expansion}~\cite{Baxter1982}:

\begin{lemma}[name = Low Temperature Expansion]
  \label{lem:Low-Temperature-Expansion}
  Let $\{L_{(xy)^{\star}}\}$ be an Ising model on $G^{\star}$, the
  dual of $G$. Up to an irrelevant multiplicative factor the partition function of
  the Ising model on $G^{\star}$ with plus boundary conditions is given by
  \begin{equation*}
    Z(G) = \sum_{H\in \cc E(G)}w^{\star}(H), \qquad w^{\star}(H) = \prod_{xy\in
      E(H)}\exp(-2L_{(xy)^{\star}}).
  \end{equation*}
\end{lemma}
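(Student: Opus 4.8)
The plan is to run the classical low-temperature (domain-wall) expansion and read off the result using planar duality. Writing the Ising partition function on $G^{\star}$ explicitly,
\[
Z(G^{\star}) = \sum_{\sigma \in \{-1,1\}^{V(G^{\star})}} \exp\left( \sum_{xy \in E(G)} L_{(xy)^{\star}}\,\sigma_{f}\sigma_{f'} \right),
\]
where for each edge $xy\in E(G)$ the symbols $f,f'$ denote the two faces of $G$ (equivalently, the two endpoints of the dual edge $(xy)^{\star}$ in $G^{\star}$) bordering $xy$. The first step is a per-edge factorization: since $\sigma_{f}\sigma_{f'}\in\{-1,+1\}$,
\[
\exp\left(L_{(xy)^{\star}}\sigma_{f}\sigma_{f'}\right) = e^{L_{(xy)^{\star}}}\,\exp\left(-2L_{(xy)^{\star}}\,\indicatorthat{\sigma_{f}\neq\sigma_{f'}}\right).
\]
Pulling the $\sigma$-independent prefactor $\prod_{xy}e^{L_{(xy)^{\star}}}$ out of the sum over $\sigma$ leaves an inner sum of $\prod e^{-2L_{(xy)^{\star}}}$ taken over those edges whose two bordering faces carry opposite spins.

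The essential step, and the one using planarity, is to identify this inner sum with a sum over $\cc E(G)$. To a configuration $\sigma$ I associate its \emph{domain wall} $H(\sigma)\subseteq G$, the subgraph whose edges are exactly those $xy$ with $\sigma_{f}\neq\sigma_{f'}$. I claim $H(\sigma)\in\cc E(G)$. Indeed, for a fixed vertex $v\in V(G)$ the faces incident to $v$ occur in a cyclic sequence determined by the embedding, and consecutive faces in this sequence are separated precisely by the edges of $G$ incident to $v$; since the graph has no loops, each such edge appears exactly once as a separator. Traversing the cycle once returns the spin to its initial value, so the number of sign changes around $v$ --- which is exactly the degree of $v$ in $H(\sigma)$ --- is even. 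Hence every vertex of $G$ has even degree in $H(\sigma)$.

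It remains to count the fibers of $\sigma\mapsto H(\sigma)$. Every $H\in\cc E(G)$ is realized: fixing a reference face, declaring the spin to flip precisely across the edges of $H$, and propagating through $G^{\star}$ gives a consistent assignment, because evenness of $H$ guarantees that the spin is unchanged around every closed walk in $G^{\star}$, and connectedness of $G^{\star}$ determines the spin on every face once the reference is chosen. The only freedom is the spin at the reference face, so the map is two-to-one onto $\cc E(G)$ (more generally $2^{k}$-to-one if $G^{\star}$ has $k$ components). Absorbing this power of $2$ together with the edge prefactor $\prod_{xy}e^{L_{(xy)^{\star}}}$ into the irrelevant multiplicative constant yields
\[
Z(G^{\star}) \;\propto\; \sum_{H\in\cc E(G)}\ \prod_{xy\in E(H)} e^{-2L_{(xy)^{\star}}} \;=\; \sum_{H\in\cc E(G)} w(H),
\]
as claimed.

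I expect the main obstacle to be the evenness-and-realizability argument of the previous two paragraphs: making precise, without invoking heavier algebraic topology, that the domain walls of $G^{\star}$ are exactly the even subgraphs of $G$ and that the correspondence is two-to-one. This is a manifestation of planar duality between the cut space of $G^{\star}$ and the cycle space of $G$, but it has to be argued carefully from the cyclic order of the faces around each vertex of $G$.
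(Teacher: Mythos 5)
Your argument is correct: it is precisely the classical low-temperature (domain-wall) expansion, which the paper does not prove but simply cites to Baxter, so there is nothing to compare beyond noting that your per-edge factorization, the evenness of domain walls via the cyclic order of faces around each vertex, and the two-to-one counting are exactly the standard route. The one point worth flagging is cosmetic: the lemma's left-hand side is written $Z(G)$ but denotes the partition function on $G^{\star}$, and your reading of it as $Z(G^{\star})$ is the intended one.
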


Let $K^{\star}_{xy} = \exp(-2L_{(xy)^{\star}})$, and define weights
$\tilde K^{\star}_{xy}$ on the edges $xy$ of $G$ as in
Section~\ref{sec:cons-ising-model}, i.e., if $xy$ is dual to an edge
in $\eta$, $\tilde K^{\star}_{xy}=-K^{\star}_{xy}$, and $\tilde
K^{\star}_{xy}=K^{\star}_{xy}$ otherwise.

\begin{corollary}
  \label{cor:Low-Temperature-Correlation}
  The spin-spin correlation $\ab{\sigma_{a^{\star}}
    \sigma_{b^{\star}}}$ is given by
  \begin{align*}
    \ab{\sigma_{a^{\star}}\sigma_{b^{\star}}} &=
    \frac{1}{Z(G)}\sum_{H\in\cc E(G)}\prod_{xy\in E(H)} \tilde K^{\star}_{xy} \\
    &= \exp\ob{-\sum_{\gamma\in\nbWalks(G)}\frac{\tilde
      w^{\star}(\gamma)-w^{\star}(\gamma)}{2\abs{\gamma}}}.
  \end{align*}
\end{corollary}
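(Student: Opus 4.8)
The plan is to establish the two equalities in turn: the first is the low-temperature analogue of the correlation identity and follows from Lemma~\ref{lem:Low-Temperature-Expansion} together with a telescoping computation, while the second is an immediate consequence of applying the planar case of Theorem~\ref{thm:Introduction-Main-Even} twice and taking a ratio.

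First I would unpack the first equality. By Lemma~\ref{lem:Low-Temperature-Expansion} the spin configurations $\sigma\in\{-1,1\}^{V(G^{\star})}$ correspond to their domain-wall subgraphs $H\in\cc E(G)$, where $xy\in E(H)$ precisely when the dual edge $(xy)^{\star}$ separates opposite spins, and the Boltzmann weight of $\sigma$ is $\prod_{xy\in E(H)}K_{xy}$ with $K_{xy}=e^{-2L_{xy}}$. Writing $\eta=(a^{\star}=v_{0},v_{1},\ldots,v_{k}=b^{\star})$, the product telescopes:
\[
\sigma_{a^{\star}}\sigma_{b^{\star}}=\prod_{i=0}^{k-1}\sigma_{v_{i}}\sigma_{v_{i+1}}.
\]
Each factor $\sigma_{v_{i}}\sigma_{v_{i+1}}$ equals $-1$ exactly when the edge $v_{i}v_{i+1}$ of $\eta$ is a domain wall, i.e.\ when its dual edge lies in $H$, so that $\sigma_{a^{\star}}\sigma_{b^{\star}}=(-1)^{|\{xy\in E(H)\,:\,(xy)^{\star}\in\eta\}|}$. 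Multiplying this sign into $\prod_{xy\in E(H)}K_{xy}$ flips the sign on exactly the edges dual to $\eta$, which by definition of $\tilde K$ turns the product into $\prod_{xy\in E(H)}\tilde K_{xy}$. Summing over $\sigma$ (equivalently over $H$) and dividing by $Z(G)=\sum_{H}\prod_{xy\in E(H)}K_{xy}$ gives the first displayed line. Path-independence of the resulting formula is automatic, since $\sigma_{a^{\star}}\sigma_{b^{\star}}$ itself does not depend on $\eta$.

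For the second equality I would invoke Theorem~\ref{thm:Introduction-Main-Even} in the planar case. With $g=0$ the group $H_{1}(\surface,\Z_{2})$ is trivial, so the sum over $\alpha$ collapses to the single term $\alpha=0$, the prefactor $2^{-g}$ and the constant $c(\alpha)$ are trivial, and the theorem reduces to
\[
\sum_{H\in\cc E(G)}\prod_{xy\in E(H)}K_{xy}=\exp\Bigl(-\sum_{\gamma\in\nbWalks(G)}\frac{w(\gamma)}{2\abs{\gamma}}\Bigr),\qquad w(\gamma)=(-1)^{\tau(\gamma)}\prod_{xy\in\gamma}K_{xy}.
\]
The key observation is that this is an identity in the edge weights that nowhere uses their signs or magnitudes, so it applies verbatim with $K_{xy}$ replaced by $\tilde K_{xy}$, the geometric turning number $\tau(\gamma)$ being unchanged. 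This yields the same formula with $w$ replaced by $\tilde w$. Taking the quotient of the $\tilde K$-identity by the $K$-identity, the ratio of even-subgraph generating functions on the left is exactly $\ab{\sigma_{a^{\star}}\sigma_{b^{\star}}}$ by the first part, while the right-hand sides combine into $\exp(-\sum_{\gamma}(\tilde w(\gamma)-w(\gamma))/(2\abs{\gamma}))$, which is the claim.

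I expect the point requiring the most care to be the applicability of the walk-sum identity to the weights at hand, rather than the telescoping bookkeeping. The low-temperature weights $K_{xy}=e^{-2L_{xy}}$ need not lie in $(-1,1)$, and $\tilde K_{xy}$ may be negative, whereas Theorem~\ref{thm:Introduction-Main-Even} is naturally stated with $K_{xy}=\tanh L_{xy}$. Since the even-subgraph generating function is a polynomial in the edge weights while the exponentiated walk sum converges to it whenever the weights are small (the genuine low-temperature regime), the two sides agree there, and as both are analytic in the couplings the identity extends to the full range; it is this analytic-continuation reading that legitimizes substituting $\tilde K$ for $K$. Once this is granted, the corollary follows formally.
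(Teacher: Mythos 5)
Your proposal is correct and follows the same route as the paper: the first equality via the low-temperature domain-wall correspondence and the telescoping of $\sigma_{a^{\star}}\sigma_{b^{\star}}$ along the dual path $\eta$ (the paper phrases this as counting intersections of $\eta$ with $H$), and the second by applying the planar case of the main even-subgraph/walk identity with weights $K$ and $\tilde K$ and taking the ratio. Your closing remark on extending the identity beyond the regime where the walk sum converges is a detail the paper's two-sentence proof passes over silently, but it does not change the argument.
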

\begin{proof}
  The first line follows from a similar analysis as in the proof of
  Lemma~\ref{lem:Low-Temperature-Expansion}, noting that the sign of
  $\sigma_{a^{\star}}\sigma_{b^{\star}}$ can be determined by counting
  the number of times a path from $a^{\star}$ to $b^{\star}$ is
  intersected by the even subgraph $H$. The second line then follows by applying
  Theorem~\ref{thm:Ising-Z-Surfaces}.
\end{proof}

Combined with Corollary~\ref{cor:High-Temperature-Planar-Correlation},
the previous result immediately yields a duality between
correlations on $G$ and $G^{\star}$.
\begin{theorem}
  \label{thm:Planar-Duality}
  Let $\ab{\sigma_{a}\sigma_{b}}_{G}$ denote the correlation of
  $\sigma_{a}$ and $\sigma_{b}$ for the Ising model $\{L_{xy}\}$ on a
  planar graph $G$, and let
  $\ab{\sigma_{a^{\star}}\sigma_{b^{\star}}}_{G^{\star}}$ denote the
  correlation of $\sigma_{a^{\star}}$ and $\sigma_{b^{\star}}$ for the Ising model
  $\{L_{(xy)^{\star}}\}$ on $G^{\star}$. Then if
  $L_{(xy)^{\star}}= -\frac{1}{2}\log\tanh L_{xy}$
  \begin{equation}
    \label{eq:Planar-Duality}
    \frac{\ab{\sigma_{a}\sigma_{b}}_{G}}
    {\ab{\sigma_{a^{\star}}\sigma_{b^{\star}}}_{G^{\star}}} = 
    -\sum_{\gamma\in\nbWalks(G,a,b)} \tilde w(\gamma),
  \end{equation}
  where $K_{xy}=\tanh L_{xy} = \exp\ob{-2L_{(xy)^{\star}}} = K^{\star}_{xy}$.
\end{theorem}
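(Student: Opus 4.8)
The plan is to obtain both sides of \eqref{eq:Planar-Duality} directly from the two correlation formulas already established and to observe that they share a common exponential factor which cancels when one forms the ratio. The only genuine content is to verify that, under the Kramers--Wannier relation $L_{x^{\star}y^{\star}}=-\frac{1}{2}\log\tanh L_{xy}$, the walk weights entering the high-temperature formula on $G$ and those entering the low-temperature formula on $G^{\star}$ are computed with \emph{identical} edge couplings, so that the two exponential factors are literally the same number.

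First I would apply Corollary~\ref{cor:High-Temperature-Planar-Correlation} to the Ising model $\{L_{xy}\}$ on the planar graph $G$, with $K_{xy}=\tanh L_{xy}$, to write
\begin{equation*}
  \ab{\sigma_{a}\sigma_{b}}_{G} = \Bigl(\sum_{\gamma\in\nbWalks(G,a,b)}\tilde w(\gamma)\Bigr)\, e^{-\sum_{\gamma\in\nbWalks(G)}\frac{\tilde w(\gamma)-w(\gamma)}{2\abs{\gamma}}},
\end{equation*}
using the fixed data $a^{\star},b^{\star}$ and dislocation path $\eta$ that define $\tilde w$. I would then apply Corollary~\ref{cor:Low-Temperature-Correlation}, which expresses the dual correlation as precisely this exponential factor,
\begin{equation*}
  \ab{\sigma_{a^{\star}}\sigma_{b^{\star}}}_{G^{\star}} = e^{-\sum_{\gamma\in\nbWalks(G)}\frac{\tilde w(\gamma)-w(\gamma)}{2\abs{\gamma}}},
\end{equation*}
this time with couplings $K_{xy}=e^{-2L_{x^{\star}y^{\star}}}$ on the dual model.

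The key step is matching the two coupling assignments: the relation $L_{x^{\star}y^{\star}}=-\frac{1}{2}\log\tanh L_{xy}$ gives $e^{-2L_{x^{\star}y^{\star}}}=\tanh L_{xy}=K_{xy}$, so the weights $w$ and $\tilde w$, and therefore the whole exponential, agree term by term between the two displays. Because the same $a^{\star}$, $b^{\star}$, and $\eta$ are used to define $\tilde w$ in both corollaries, the two exponential factors are the same (strictly positive) quantity; dividing the first display by the second then leaves exactly $\sum_{\gamma\in\nbWalks(G,a,b)}\tilde w(\gamma)$, which is \eqref{eq:Planar-Duality}.

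The hard part will be bookkeeping rather than analysis: I must confirm that the orientation and sign conventions, together with the choice of dislocation path $\eta$ used in the high-temperature formula, coincide with those used in the low-temperature formula, so that $\tilde w$ truly denotes the same weight in both expressions. I would also record explicitly that the common exponential factor is positive, so that the left-hand ratio of \eqref{eq:Planar-Duality} is well defined, and that the surviving walk sum is evaluated with the high-temperature couplings $K_{xy}=\tanh L_{xy}$ on $G$.
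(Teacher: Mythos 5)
Your proposal is correct and is essentially identical to the paper's own argument: the paper likewise derives Theorem~\ref{thm:Planar-Duality} by taking the ratio of Corollary~\ref{cor:High-Temperature-Planar-Correlation} and Corollary~\ref{cor:Low-Temperature-Correlation}, with the Kramers--Wannier relation $e^{-2L_{x^{\star}y^{\star}}}=\tanh L_{xy}=K_{xy}$ making the two exponential factors coincide and cancel. Your added care about the shared choice of $a^{\star},b^{\star},\eta$ and the positivity of the exponential is reasonable bookkeeping that the paper leaves implicit.
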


Equation~\eqref{eq:Planar-Duality} shows that, up to a modulus one
multiplicative constant, the sum over walks on the right-hand side of
Equation~\eqref{eq:Planar-Duality} is the value of the holomorphic
fermionic observable discussed
in~\cite[Eq.~(2.5)]{ChelkakHonglerIzyurov2012},
yielding~\Cref{cor:Spinor-Identification}.

\section{Acknowledgements}
\label{sec:Ack}

The author would like to thank his supervisor, Professor David
Brydges, both for suggesting this problem and for his invaluable
support, comments and advice. The author is also very grateful to the
anonymous referee whose comments and criticisms led to important
corrections and improvements.

\appendix
\section{Heaps of Pieces}
\label{sec:Heaps}

See either of~\cite{Krattenthaler2000,Viennot1986} for an introduction
to the theory of heaps of pieces. The notation used here is the same,
though some of the terminology is slightly different. To interpret the
following definitions let $\cc B$ be the set of all distinct
loops. Loops in $\cc B$ are concurrent when they have edges in
common. A heap of loops is visualized literally as a heap. In
particular, when two loops are concurrent then one must be above the
other. Heaps may contain the same loop multiple times. Pyramids are
heaps with a unique highest loop.

A \emph{concurrency relation} is a symmetric and reflexive binary
relation. Let $\cc R$ be a concurrency relation on a set $\cc B$. The
set $\cc B$ will be called the set of \emph{piece types}. A
heap of pieces $(H,\preceq,\ell)$ is a triple with $\ell\colon H\to
\cc B$ and $(H,\preceq)$ a poset such that
\begin{enumerate}
\item If $x,y\in H$ and $\ell(x)\cc R\ell(y)$ then either $x\preceq y$
  or $y\preceq x$.
\item The relation $\preceq$ is the transitive closure of the
  relations from the previous condition.
\end{enumerate}

The map $\ell$ is called the \emph{labelling} of the pieces. Given a
collection of piece types $\cc B$ and a concurrency relation $\cc R$ define
\begin{itemize}
\item $\cc H(\cc B,\cc R)$ to be the set of all heaps of pieces,
\item $\cc T(\cc B,\cc R)$ to be the set of \emph{trivial heaps} of pieces,
  i.e., heaps of pieces for which $\ell(x)\!\!\not{\!\!\cc R} \ell(y)$ for any
  $x,y\in H$.
\item $\cc P(\cc B,\cc R)$ to be the set of \emph{pyramids}, i.e., heaps
  of pieces which contain a unique maximal element.
\end{itemize}

There is a natural notion of composition for two heaps of
pieces. If $(H_{i},\preceq_{i},\ell_{i})\in\cc H(\cc B,\cc R)$ then define
\begin{equation}
  (H_{1},\preceq_{1},\ell_{1}) \circ (H_{2},\preceq_{2},\ell_{2}) = (H_{3},\preceq_{3},\ell_{3})
\end{equation}
where
\begin{enumerate}
\item $H_{3} = H_{1}\cup H_{2}$
\item The partial order $\preceq_{3}$ is the transitive closure of
  \begin{itemize}
  \item $v_{1}\preceq_{3}v_{2}$ if $v_{1}\preceq_{1}v_{2}$,
  \item $v_{1}\preceq_{3}v_{2}$ if $v_{1}\preceq_{2}v_{2}$,
  \item $v_{1}\preceq_{3}v_{2}$ if $v_{1}\in H_{1}$, $v_{2}\in
    H_{2}$, and $\ell_{1}(v_{1})\cc R\ell_{2}(v_{2})$.
  \end{itemize}
\end{enumerate}

Intuitively the heap $(H_{2},\preceq_{2},\ell_{2})$ is
placed on top of the heap $(H_{1},\preceq_{1},\ell_{1})$. Note that
this addition is not commutative.

If $\cc M\subset\cc B$ then $\cc H_{\cc M}(\cc B,\cc R)$ is the set of
heaps of pieces with piece types $\cc B$ whose maximal elements have
labels in $\cc M$.

A weight function $w\colon\cc B\to \C$ naturally extends to heaps of
pieces via
\begin{equation}
  w(H) = \prod_{x\in H}w(\ell(x)).
\end{equation}

The result from the theory of heaps of pieces that is needed for this
paper is the following. See~\cite{Krattenthaler2000} for a short and
elegant proof.

\begin{theorem}
  \label{thm:Heaps-Theorem}
  As formal power series
  \begin{equation}
    \sum_{H\in\cc H_{\cc M}(\cc B,\cc R)} w(H) = \frac{ \sum_{T\in \cc
      T(\cc B\setminus \cc M,\cc R)} (-1)^{\abs{T}}w(T)} { \sum_{T\in
      \cc T(\cc B,\cc R)} (-1)^{\abs{T}}w(T)}
  \end{equation}
\end{theorem}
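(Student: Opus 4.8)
The plan is to deduce Theorem~\ref{thm:Heaps-Theorem} from two more basic facts about heaps: an \emph{inversion lemma} computing the generating function of \emph{all} heaps, and a \emph{factorization} of $\cc H(\cc B,\cc R)$ that isolates those heaps whose maximal pieces lie in $\cc M$. Throughout I regard each generating function as a formal power series in the indeterminates $\{w(b)\}_{b\in\cc B}$, graded by the number of pieces, so that every manipulation is a term-by-term identity; since $\cc B=\bCSWalks(G)$ is finite in the application this causes no difficulty. I write $\mathrm{Max}(K)$ for the set of maximal pieces of a heap $K$.

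First I would prove the inversion lemma
\begin{equation*}
  \Bigl(\sum_{H\in\cc H(\cc B,\cc R)}w(H)\Bigr)\Bigl(\sum_{T\in\cc T(\cc B,\cc R)}(-1)^{\abs{T}}w(T)\Bigr)=1 .
\end{equation*}
Expanding the product groups the terms into pairs $(H,T)$ with $T$ trivial, weighted by $(-1)^{\abs{T}}w(H)w(T)$. Placing $T$ on top of $H$ gives $K=H+T$ with $w(H)w(T)=w(K)$; since $T$ is trivial and sits on top, each of its pieces is maximal in $K$, so $T\subseteq\mathrm{Max}(K)$. Conversely, because any two maximal pieces of a heap are incomparable and hence non-concurrent, every subset $S\subseteq\mathrm{Max}(K)$ is itself a trivial heap and $K=(K\setminus S)+S$. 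Thus $(H,T)\mapsto(H+T,T)$ is a bijection onto pairs $(K,S)$ with $S\subseteq\mathrm{Max}(K)$, and the product becomes
\begin{equation*}
  \sum_{K\in\cc H(\cc B,\cc R)}w(K)\sum_{S\subseteq\mathrm{Max}(K)}(-1)^{\abs{S}}
  =\sum_{K}w(K)\,[\,\mathrm{Max}(K)=\emptyset\,]=w(\emptyset)=1,
\end{equation*}
using $\sum_{S\subseteq M}(-1)^{\abs{S}}=0$ whenever $M\neq\emptyset$. The same computation with $\cc B$ replaced by $\cc B\setminus\cc M$ inverts the generating function of $\cc H(\cc B\setminus\cc M,\cc R)$.

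Next I would establish the factorization $\cc H(\cc B,\cc R)\cong\cc H_{\cc M}(\cc B,\cc R)\times\cc H(\cc B\setminus\cc M,\cc R)$. Given $H\in\cc H(\cc B,\cc R)$, let $H''$ be the set of pieces $x$ with $\ell(x)\in\cc B\setminus\cc M$ such that every $y\succeq x$ also satisfies $\ell(y)\in\cc B\setminus\cc M$. This $H''$ is an order filter, so $H=H'+H''$ with $H'=H\setminus H''$ the complementary down-set placed below. A maximal piece of $H'$ cannot carry a label in $\cc B\setminus\cc M$, since it would then lie in $H''$; hence $H'\in\cc H_{\cc M}(\cc B,\cc R)$, while $H''\in\cc H(\cc B\setminus\cc M,\cc R)$ by construction, and one checks this is the unique decomposition of this form, giving a weight-multiplicative bijection. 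Therefore
\begin{equation*}
  \sum_{H\in\cc H(\cc B,\cc R)}w(H)=\Bigl(\sum_{H\in\cc H_{\cc M}(\cc B,\cc R)}w(H)\Bigr)\Bigl(\sum_{H\in\cc H(\cc B\setminus\cc M,\cc R)}w(H)\Bigr).
\end{equation*}
Substituting the two instances of the inversion lemma and solving for the middle factor yields exactly the ratio asserted in Theorem~\ref{thm:Heaps-Theorem}.

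The main obstacle is not the algebra but verifying the two bijections at the level of the heap partial order, where composition is defined as a transitive closure and is non-commutative. Concretely, I must confirm that for $S\subseteq\mathrm{Max}(K)$ the composite $(K\setminus S)+S$ reproduces $K$ with precisely its original order (neither coarser nor finer), and that the top filter $H''$ is recovered by every decomposition of the prescribed type, so that the factorization is genuinely bijective. Both reduce to the observations that incomparable pieces are non-concurrent and that $+$ is associative, but these order-theoretic checks are where the care is required.
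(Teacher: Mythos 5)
Your proposal is correct, but there is nothing in the paper to compare it against line by line: the paper states Theorem~\ref{thm:Heaps-Theorem} and defers the proof entirely to~\cite{Krattenthaler2000}. Your argument is a valid, self-contained proof along Viennot's original lines: first the inversion lemma (which is the special case $\cc M=\cc B$ of the theorem), then the unique factorization $\cc H(\cc B,\cc R)\cong\cc H_{\cc M}(\cc B,\cc R)\times\cc H(\cc B\setminus\cc M,\cc R)$, and finally division of formal power series, which is legitimate since the trivial-heap alternating sums have constant term $1$ (the empty heap) and are therefore invertible. This modular route differs in presentation from the proof usually given in the cited exposition, which clears the denominator and runs a sign-reversing involution directly on pairs consisting of a trivial heap and a heap with maximal labels in $\cc M$; your version confines all cancellation to the one-line identity $\sum_{S\subseteq M}(-1)^{\abs{S}}=0$ for $M\neq\emptyset$ and isolates the combinatorial content in two unique-decomposition statements, at the cost of having to verify those decompositions at the level of the heap order.

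The two order-theoretic checks you flag as requiring care are indeed the crux, and both go through. For the inversion lemma: any two maximal pieces of $K$ are incomparable, hence non-concurrent by axiom~1, so $S\subseteq\mathrm{Max}(K)$ is a trivial heap; and $(K\setminus S)+S=K$ because every generating chain of concurrency-forced relations (axiom~2) that reaches a piece of $S$ must stop there (maximal pieces have nothing strictly above them), so the induced order on $K\setminus S$ is generated by its internal concurrency relations, while the composition restores exactly the relations $x\preceq s$ that axiom~1 forces between $K\setminus S$ and $S$. Symmetrically, since composition only adds relations directed from the lower heap into the upper one, the upper factor in any decomposition $H=H_{1}^{\prime}+H_{1}^{\prime\prime}$ is an up-set of $H$; as its labels lie in $\cc B\setminus\cc M$ this gives $H_{1}^{\prime\prime}\subseteq H^{\prime\prime}$, and conversely an element $x\in H^{\prime\prime}\cap H_{1}^{\prime}$ would lie below some maximal piece of $H_{1}^{\prime}$ whose label would then have to lie simultaneously in $\cc M$ and, by the defining property of $H^{\prime\prime}$, in $\cc B\setminus\cc M$ --- a contradiction. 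So the filter $H^{\prime\prime}$ is recovered by every decomposition of the prescribed type, the factorization is bijective and weight-multiplicative, and the theorem follows as you state.
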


\bibliographystyle{ieeetr}
\def\cprime{$'$}

\end{document}